\documentclass[11pt,english]{smfart}

\usepackage{amsmath,leftidx,amsthm}
\usepackage[all]{xy}
\usepackage{xspace,amsthm}
\usepackage[psamsfonts]{amssymb}
\usepackage[latin1]{inputenc}
\usepackage{graphicx,color}
\usepackage{hyperref,fancyhdr,dutchcal,appendix}
\usepackage{xcolor} 
\usepackage{url}

\newtheorem{theorem}{\bf Theorem}[section]
\newtheorem{proposition}[theorem]{\bf Proposition}
\newtheorem{definition}[theorem]{\bf Definition}
\newtheorem{Theorem}{\bf Theorem}

\newtheorem{lemma}[theorem]{\bf Lemma}
\newtheorem{corollary}[theorem]{\bf Corollary}
\newtheorem{conjecture}[theorem]{\bf Conjecture}

\newtheorem{example}[theorem]{\bf Example}

\newtheorem{remark}[theorem]{\bf Remark}

\def\C{{\mathbb C}}
\def\N{{\mathbb N}}
\def\R{{\mathbb R}}

\def\B{\mathbb{B}}

\def\p{\mathbb{P}}

\def\supp{\textup{supp}}

\def\Per{\textup{Per}}

\topmargin 0cm
\oddsidemargin 0cm
\evensidemargin 0cm
\textheight 22cm
\textwidth 16cm

\def\and{{\quad\text{and}\quad}}

\title{The geometric dynamical Northcott and Bogomolov properties}
\alttitle{Les propri\'et\'es dynamiques g\'eom\'etriques de Northcott et Bogomolov}
\author{Thomas Gauthier}
\address{CMLS, Ecole Polytechnique, Institut Polytechnique de Paris, 91128 Palaiseau Cedex, France}
\email{thomas.gauthier@polytechnique.edu}
\author{Gabriel Vigny}
\address{LAMFA, Universit\'e de Picardie Jules Verne, 33 rue Saint-Leu, 80039 AMIENS Cedex 1, FRANCE}
\email{gabriel.vigny@u-picardie.fr}
\thanks{Both authors are partially supported by the ANR grant Fatou ANR-17-CE40-0002-01.}
\thanks{Keywords: Polarized endomorphism, canonical height, algebraic family of rational maps, arithmetic characterizations of stability.}
\thanks{Mathematics~Subject~Classification~(2010):  	37P15, 37P30, 11G50, 37P35, 37F45}

\begin{document}
\begin{abstract}
 We establish the dynamical Northcott property for polarized endomorphisms of a projective variety over a function field $\mathbf{K}$ of characteristic zero, and we relate this property to the notion of stability in complex dynamics. This extends previous results of Benedetto, Baker and DeMarco in dimension $1$, and of Chatzidakis-Hrushovski in higher dimension. Our proof uses complex dynamics arguments and does not rely on the previous ones. 
 
 We first show that, when $\mathbf{K}$ is the field of rational functions of a normal complex projective variety, the canonical height of a subvariety is the mass of an appropriate bifurcation current and that a marked point is stable if and only if its canonical height is zero. We then establish the geometric dynamical Northcott property characterizing points of height zero in this setting, using a similarity argument. Moving from points to subvarieties, we propose, for polarized endomorphisms, a dynamical version of the geometric Bogomolov conjecture, recently proved by Cantat, Gao, Habegger, and Xie in the original setting of abelian varieties.
 \end{abstract}
 
 \begin{altabstract}
Nous \'etablissons la propri\'et\'e dynamique de Northcott pour les endomorphismes polaris\'es d'une vari\'et\'e projective sur un corps de fonctions $\mathbf{K}$ de caract\'eristique nulle, et nous relions cette propri\'et\'e \`a la notion de stabilit\'e en dynamique complexe. Cela généralise des r\'esultats de Benedetto, Baker, et DeMarco en dimension $1$, et de Chatzidakis-Hrushovski en dimension plus grande. Notre d\'emonstration met en jeu des arguments de dynamique complexe et ne repose pas sur ceux utilis\'es jusqu'alors.
 
Dans un premier temps nous prouvons que, lorsque $\mathbf{K}$ est le corps des fonctions rationnelles d'une vari\'et\'e projective complexe, la hauteur canonique d'une sous-vari\'et\'e co\"incide avec la masse d'un courant de bifurcation appropri\'e, et qu'un point marqu\'e est stable si et seulement si sa hauteur canonique est nulle. Nous \'etablissons alors la propri\'et\'e dynamique g\'eom\'etrique de Northcott caract\'erisant les points de hauteur nulle dans ce contexte, en utilisant un argument de similarit\'e. En passant de l'\'etude des points \`a celle des sous-vari\'et\'es, nous proposons, pour les endomorphismes polaris\'es, une version dynamique de la conjecture de Bogomolov g\'eom\'etrique, r\'ecemment d\'emontr\'ee par Cantat, Gao, Habegger, et Xie dans le cas des vari\'et\'es ab\'eliennes.
 \end{altabstract}

\maketitle

\section{Introduction}
\subsection{Polarized endomorphisms, functions fields}

Fix a field $\mathbf{K}$ of characteristic zero. A \emph{polarized endomorphism} over $\mathbf{K}$ is a triple $(X,f,L)$ where
	\begin{enumerate}
		\item $X$ is a projective variety defined over $\mathbf{K}$, irreducible over $\bar{\mathbf{K}}$.
		\item $L$ is an ample line bundle of $X$ which is defined over $\mathbf{K}$, and
		\item $f:X\to X$ is an endomorphism defined over $\mathbf{K}$ which is \emph{polarized} by $L$, i.e.\ there is an integer $d\geq2$ such that $f^*L$ is linearly equivalent to $L^{\otimes d}$, denoted $f^*L\simeq L^{\otimes d}$.
	\end{enumerate}
	The integer $d$ is the \emph{degree} of $(X,f,L)$.  A prototypical example is an endomorphism $f$ of a projective space of degree $d\geq2$, since it satisfies $f^*\mathcal{O}(1)\simeq \mathcal{O}(d)\simeq\mathcal{O}(1)^{\otimes d}$.

\medskip

In this article, we are interested in the case where $\mathbf{k}$ is an algebraically closed field of characteristic zero and $\mathbf{K}:=\mathbf{k}(\mathcal{B})$ is the field of rational functions of an irreducible normal projective $\mathbf{k}$-variety $\mathcal{B}$ of dimension at least one (for our purpose, we can always work with the algebraic closure of $\mathbf{k}$ so we directly assume it is algebraically closed). To such a variety $X$ endowed with the ample line bundle $L$, we can associate a \emph{model} $(\mathcal{X},\mathcal{L})$ of $(X,L)$, i.e.\ a surjective morphism 
\[\pi:\mathcal{X}\longrightarrow \mathcal{B}\]
between projective varieties, where $\mathcal{L}$ is a relatively ample line bundle, such that
\begin{enumerate}
	\item the generic fiber of $\mathcal{X}$ is isomorphic to $X$,
	\item the line bundle $L$ is isomorphic to the restriction of $\mathcal{L}$ to the generic fiber,
	\item there exists a dense Zariski open set $\Lambda\subset\mathcal{B}$ over which $\pi$ is flat. We denote $\mathcal{X}_\Lambda:= \pi^{-1}(\Lambda)$.
\end{enumerate}
Note that such a model is not unique, there are infinitely many choices (see \cite[Chapter 1]{Neron-model} for generalities on models of $\mathbf{K}$-schemes). Similarly, $\Lambda$ is not unique and we will shrink it if necessary.
For any $\lambda\in \Lambda(\mathbf{k})$, the fiber $X_\lambda:=\pi^{-1}\{\lambda\}$ is a projective $\mathbf{k}$-variety of dimension $\dim X$ and $L_\lambda:=\mathcal{L}|_{X_\lambda}$ is an ample line bundle of $X_\lambda$.
Furthermore, when $X$ is normal, up to replacing $\mathcal{X}$ by its normalization, we can assume $\mathcal{X}$ is normal and $X_\lambda$ is normal for all $\lambda\in \Lambda$.  We call such a normal variety $\mathcal{X}$ a \emph{normal model} of $X$. 

\medskip

Then, a polarized morphism $f$ defined over $\mathbf{K}$ induces a dominant rational map $\mathcal{f}:\mathcal{X}\dasharrow \mathcal{X}$ and we can choose a dense Zariski open subset $\Lambda\subset\mathcal{B}$ as above such that in addition 
\begin{enumerate}
	\item[(a)] the following diagram commutes
	\[
	\xymatrix{
		\mathcal{X}\ar@{-->}[rr]^{\mathcal{f}}\ar[rd]^{\pi} & & \ar[ld]^{\pi}\mathcal{X}\\
		& \mathcal{B} & }
	\]
	\item[(b)] $\mathcal{f}|_{\mathcal{X}_\Lambda}:\mathcal{X}_\Lambda\to\mathcal{X}_\Lambda$ is a morphism,
	\item[(c)] for any $\lambda\in\Lambda$, if we set $f_\lambda:=\mathcal{f}|_{X_\lambda}$, then $(X_\lambda,f_\lambda,L_\lambda)$ is a polarized endomorphism over $\mathbf{k}$, 
	\item[(d)] $\mathcal{f}$ restricted to the generic fiber of $\mathcal{X}$ can be identified with $f$ via the isomorphism between the generic fiber of $\mathcal{X}$ and $X$ given by Property 1 above. 
\end{enumerate}
\begin{definition} Let $(X,f,L)$ be a polarized endomorphism over $\mathbf{K}=\mathbf{k}(\mathcal{B})$ where $\mathbf{k}$ is an algebraically closed field  of characteristic zero and $\mathbf{K}$ is the field of rational functions of a normal projective $\mathbf{k}$-variety $\mathcal{B}$.
	A triple $(\mathcal{X},\mathcal{f},\mathcal{L})$ satisfying properties \emph{1.$-$3.} and \emph{(a)$-$(d)} above for some dense Zariski open set $\Lambda$ is called an \emph{algebraic family} of polarized endomorphisms; and $\Lambda$ is called a regular part.  Such a family is a \emph{model} of $(X,f,L)$.
\end{definition}
We will frequently shrink the open set $\Lambda$, but we will always assume it is dense and Zariski open.
 Note that in the article, we will use curly letters $\mathcal{f}, \mathcal{L}, \mathcal{Z} \dots$ for objects defined on the model $\mathcal{X}$ in order to distinguish them from their counterparts $f, L, Z \dots$  on $X$.  
When $X$ is not normal, letting $\mathsf{n}:\hat{X}\to X$ be its normalization, the universal property of normalization implies that $f$ lifts to an endomorphism $\hat{f}:\hat{X}\to \hat{X}$ and, since $\mathsf{n}$ is finite, $\mathsf{n}^*L$ is ample, so that $(\hat{X},\hat{f}, \mathsf{n}^*L)$ still defines a polarized endomorphism.

A \emph{marked point} is a rational section $a:\mathcal{B}\dasharrow \mathcal{X}$ whose indeterminacy locus is contained in $\mathcal{B}\backslash \Lambda$, where $\Lambda\subseteq\mathcal{B}$ is a regular part for $(\mathcal{X},\mathcal{f},\mathcal{L})$ (when $\mathcal{B}$ is a curve, the indeterminacy locus is empty). In particular, $a$ defines a regular map $a:\Lambda\to\mathcal{X}$ such that $a(\lambda)\in X_\lambda$ for all $\lambda\in\Lambda$.
To any subvariety $Z$ of $X$ which is defined over $\mathbf{K}$, we can associate a subvariety $\mathcal{Z}$ such that $\pi|_{\mathcal{Z}}:\mathcal{Z}\rightarrow\mathcal{B}$ is flat over a dense Zariski open subset of $\Lambda$. A point of $X(\mathbf{K})$ corresponds to  a marked point $a:\mathcal{B}\dasharrow \mathcal{X}$. We also need the following notion of isotriviality which means, in some sense, that the family does not really depend on the parameter.  
\begin{definition}\label{def_isotrivial} Let $(X,f,L)$ be a polarized endomorphism over $\mathbf{K}=\mathbf{k}(\mathcal{B})$ where $\mathbf{k}$ is an algebraically closed field  of characteristic zero and $\mathbf{K}$ is the field of rational functions of a normal projective $\mathbf{k}$-variety $\mathcal{B}$. Let $(\mathcal{X},\mathcal{f},\mathcal{L})$ be a model of $(X,f,L)$ and let $\Lambda$ be a regular part. When $X$ is normal, we say $(X,f,L)$, or equivalently $(\mathcal{X},\mathcal{f},\mathcal{L})$, is \emph{isotrivial} over $\Lambda$ if, for any $\lambda_0\in \Lambda(\mathbf{k})$, up to making a base change, there is an isomorphism $\phi:\mathcal{X}_\Lambda\to X_{\lambda_0}\times \Lambda$ such that $\phi\circ f=(f_{\lambda_0},\pi)\circ \phi$ and such that for any $\lambda\in \Lambda(\mathbf{k})$, if $\phi_\lambda:=\phi|_{X_\lambda}:X_\lambda\to X_{\lambda_0}$ is the induced isomorphism, then $\phi_\lambda^*L_{\lambda_0}\simeq L_\lambda$.
	
	In the general case, we say $(X,f,L)$ is \emph{isotrivial} over $\Lambda$ if  $(\hat{X}, \hat{f}, \mathsf{n}^*L)$ is, where $\mathsf{n}:\hat{X}\to X$ is its normalization and $\hat{f}:\hat{X}\to \hat{X}$ is the lift of $f$.
\end{definition}

\begin{remark}\normalfont
	\begin{enumerate}
		\item If $(X,f,L)$ is isotrivial over $\Lambda$, so is the polarized variety $(X,L)$ (i.e.\ in a model, fibers over $\lambda$ endowed with their polarization are isomorphic) and isotriviality does not depend on the choice of the model. 
		\item  Lemma~\ref{lm:isotriviality-iterate} states that the notion of isotriviality is independent of the chosen regular part $\Lambda$, and also that a polarized endomorphism $(X,f,L)$ is isotrivial if and only if one of its iterates $(X,f^n,L)$ is. Isotriviality is therefore a dynamically relevant property. 
		\item When $\mathcal{X}=\p^k\times\mathcal{B}$, the condition $\phi^*L_{\lambda'}\simeq L_\lambda$ is automatically satisfied. Indeed, we have $L=\mathcal{O}_{\p^k}(j)$, for some $j\in \N^*$, so that for any $\lambda\in \Lambda(\mathbf{k})$, we have $L_{\lambda}\simeq\mathcal{O}_{\p^k}(j)$. In this case, the isotriviality of $(\mathcal{X},\mathcal{f},\mathcal{L})$ implies the existence, for any $\lambda,\lambda'\in\Lambda$, of a linear isomorphism $\phi_\lambda^{\lambda'}:\p^k\to\p^k$ such that $\phi_\lambda^{\lambda'}\circ f_\lambda=f_{\lambda'}\circ\phi_\lambda^{\lambda'}$.   
		\item When $\dim X=0$, $X(\mathbf{K})$ is a finite set and the isotriviality condition is always satisfied over the Zariski open subset $\Lambda'$ over which $\mathcal{X}\to\mathcal{B}$ is unramified.
	\end{enumerate}
\end{remark}

\subsection{Dynamical stability}
Assume furthermore that $\mathbf{K}=\C(\mathcal{B})$ is the field of rational functions of a normal complex projective variety $\mathcal{B}$.
A marked point $a$ is said \emph{stable}, or equivalently the tuple $((\mathcal{X},\mathcal{f},\mathcal{L}),a)$ is said \emph{stable}, if the sequence $\{\lambda\mapsto f_\lambda^n(a(\lambda))\}_n$ is locally equicontinuous on $\Lambda$. By the Bishop Theorem~\cite[\S 15.5, p. 203]{Chirka}, this means that the volume of the graph $\Gamma_n$ of $\lambda\mapsto f^n_\lambda(a(\lambda))$ seen as a subvariety of $\mathcal{X}_\Lambda$ is bounded independently of $n$ above any relatively compact open subset of $\Lambda$ (i.e.\ locally on $\Lambda$). This notion is, by nature, a local notion. In particular, it is independent of the regular part $\Lambda$.

When $\dim\mathcal{B}=1$ and $X=\p^1$, then $\mathcal{f}$ defines a family of endomorphisms of degree $d$ of $\p^1(\C)$, parametrized by a smooth complex quasi-projective curve $\Lambda$, i.e.\   a morphism $\mathcal{f}:(z,\lambda)\in \p^1\times\Lambda\longmapsto(f_\lambda(z),\lambda)\in \p^1\times\Lambda$ such that for any $\lambda\in\Lambda$, $f_\lambda$ is a rational map of degree $d$. Let $\mathrm{Crit}(f_\lambda)$ denote the critical set of $f$. Up to a base change, the critical points are marked points. McMullen \cite{McMullen-families} considered the case where all these marked points are stable and showed the fundamental result:  either $f$ is isotrivial, or $f$ is a family of Latt\`es maps, in which case the critical points are preperiodic. 

Dujardin and Favre~\cite{favredujardin} then considered the case of a single given marked critical point showing that stability implies preperiodicity or isotriviality. Finally, DeMarco~\cite{demarco} proved such a statement for any marked point.

\subsection{Canonical height and the Northcott property}
We now focus on a more arithmetic point of view. 
A field $\mathbf{K}$ is a {\em product formula field} if $\mathbf{K}$ is equipped with the possibly uncountable family $M_\mathbf{K}$ of
all places of $\mathbf{K}$, a family $(|\cdot|_v)_{v\in M_\mathbf{K}}$ of 
non-trivial absolute values $|\cdot|_v$ representing $v$,
and a family $(N_v)_{v\in M_\mathbf{K}}$ of positive integers satisfying the 
{\em product formula property} in that, for every $z\in \mathbf{K}^*$,
\begin{center}
	$|z|_v=1$ for all but finitely many $v\in M_\mathbf{K}$, and
	$\prod_{v\in M_\mathbf{K}}|z|^{N_\nu}_v=1$.
\end{center}
A place $v\in M_\mathbf{K}$ is said to be finite (resp.\ infinite)
if $|\cdot|_v$ is non-archimedean (resp.\ archimedean).
If $M_\mathbf{K}$ contains an infinite place, then $\mathbf{K}$ is a
\emph{number field} and, if not, it is a \emph{function field}. In particular, when $\mathcal{B}$ is a normal complex projective variety, $\mathbf{K}=\mathbb{C}(\mathcal{B})$ is a product formula field and the choice of the family $(N_v)_{v\in M_\mathbf{K}}$ is equivalent to the choice of a polarization on $\mathcal{B}$, see, e.g.,~\cite[p. 21]{bombieri-gubler}.

Fix a polarized endomorphism $(X,f,L)$ defined over a product formula field $\mathbf{K}$. Let $h_{X,L}$ be the standard Weil height function on $X(\bar{\mathbf{K}})$, relative to the line bundle $L$ (and of a polarization on $\mathcal{B}$). Such a function is defined up to bounded functions and, by the functorial properties of Weil height functions, we have $h_{X,L}\circ f=dh_{X,L}+O(1)$ (\cite{bombieri-gubler}). Following an argument of Tate, Call and Silverman \cite{CS-height} defined the \emph{canonical height} $\widehat{h}_f:X(\bar{\mathbf{K}})\to\R_+$ of $f$ as
\[\widehat{h}_f=\lim_{n\to\infty}\frac{1}{d^n}h_{X,L}\circ f^n.\]
When $\mathbf{K}$ is a number field, Northcott's Theorem \cite{northcott} states that, for any $\varepsilon>0$, the set $\{ x\in X(\mathbf{K}),  \ h_{L,X}(x)\leq \varepsilon\}$ is finite. Since $\widehat{h}_f=h_{X,L} + O(1)$, this implies that the set $\{ x\in X(\mathbf{K}),  \ \widehat{h}_f(x)\leq \varepsilon\}$ is also finite. As $\widehat{h}_f(x)\geq 0$ for all $x$, there is an $\varepsilon_0>0$ 
such that  $\{ x\in X(\mathbf{K}),  \ \widehat{h}_f(x)\leq \varepsilon_0\}= \{ x\in X(\mathbf{K}),  \ \widehat{h}_f(x)=0\}$. Now, since $\widehat{h}_f\circ f= d \widehat{h}_f$, the finite set $\{ x\in X(\mathbf{K}),  \ \widehat{h}_f(x)=0\}$ is $f$-invariant so its points are preperiodic; thus, $  \{ x\in X(\mathbf{K}),  \ \widehat{h}_f(x)\leq \varepsilon_0\}$ is a finite, invariant set of preperiodic points.

 However, when $\mathbf{K}$ is the function field of a projective variety over a field of characteristic zero, there are infinitely many elements of standard height $0$ in $\mathbb{P}^k(\mathbf{K})$ (the elements defined over the field of constants) and all of them have uniformly bounded canonical height since $\widehat{h}_f=h_{X,L} + O(1)$. In particular, to try and state a Northcott property, one uses the canonical height instead of the standard height. One also replaces ``$\forall \varepsilon >0$'' by ``$\exists \varepsilon_0 >0$''. Furthermore, Northcott's property fails when there is isotriviality. For example, if $f$ is defined over the field of constants $\C \subset \C(t)$, then any element $z$ of degree $0$ is sent by $f$ to another element of degree $0$ so its canonical height is zero.

Over $\p^1(\mathbf{K})$, we have 
\begin{theorem}[Northcott property \cite{Baker-functionfield}]\label{tm:Baker}
	Let $\mathbf{K}$ be a function field of characteristic zero and assume  $f\in \
	\operatorname{End}(\p^1_\mathbf{K})$ is non-isotrivial and of degree $d\geq 2$. Then, there exists $\varepsilon_0>0$ such that 
	\[\#\{z\in\p^1(\mathbf{K}), \widehat{h}_{f}(z)\leq \varepsilon_0\}<\infty.\] 
	In particular, any $z\in\p^1(\mathbf{K})$ with $\widehat{h}_{f}(z)=0$ is preperiodic under iteration of $f$, i.e.\  there exist $n>m\geq0$ such that $f^n(z)=f^m(z)$. 
\end{theorem}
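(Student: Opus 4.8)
The plan is to give a proof via complex dynamics. First I would reduce to the case $\mathbf{k}=\C$ with $\mathcal{B}$ a smooth projective \emph{curve}. A non-isotrivial $f$ is defined over a subfield of $\mathbf{K}$ of the form $\mathbf{k}_0(\mathcal{B})$ with $\mathbf{k}_0$ finitely generated over $\Q$ and hence embeddable into $\C$ (and any competitor $z\in\p^1(\mathbf{K})$ of bounded height is likewise defined over such a field), while one lowers $\dim\mathcal{B}$ to $1$ by restricting $f$ and the $z$'s to a very general member $C$ of a Lefschetz pencil on $\mathcal{B}$: under this restriction the canonical height gets multiplied by the fixed positive factor $\deg(C)$, and non-isotriviality, preperiodicity, and pairwise distinctness of the competing sections are all preserved. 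After these reductions, $f$ is a non-isotrivial algebraic family $(f_t)_{t\in\mathcal{B}(\C)}$ of degree-$d$ rational maps and a point $z\in\p^1(\mathbf{K})$ is a holomorphic section $\sigma_z\colon\mathcal{B}\to\p^1$.

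Next, I would identify the canonical height with the mass of a bifurcation current. Decomposing the Call--Silverman height into local canonical heights over the places of $\mathbf{K}$ — which are the closed points of $\mathcal{B}$ — in Chambert-Loir's adelic/Arakelov formalism, the archimedean local heights are the dynamical Green functions $G_{F_t}$ of a meromorphically varying homogeneous lift $F_t$ of $f_t$, and the resulting sum is an integral over $\mathcal{B}(\C)$:
\[\widehat{h}_f(z)=\int_{\mathcal{B}(\C)}T_{f,z}, \qquad\text{where}\qquad T_{f,z}:=dd^c_t\,G_{F_t}(\widetilde{\sigma}_z(t))\geq 0,\]
$\widetilde{\sigma}_z$ being a meromorphic lift of $\sigma_z$ and $T_{f,z}$ — the \emph{activity current} of the marked point — a positive closed $(1,1)$-current on $\mathcal{B}$ with no mass at the finitely many degenerate parameters (after the correct normalisation). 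Equivalently, if $\widehat{T}_f$ denotes the fibrewise Green current on $\mathcal{B}\times\p^1$, then $\widehat{h}_f(z)=\int_{\Gamma_{\sigma_z}}\widehat{T}_f$, the intersection of $\widehat{T}_f$ with the graph of $\sigma_z$.

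Granting this, the ``in particular'' clause follows: $\widehat{h}_f(z)=0$ forces $T_{f,z}=0$, i.e.\ $t\mapsto G_{F_t}(\widetilde{\sigma}_z(t))$ is harmonic, i.e.\ $(f_t^n(z(t)))_{n\geq 0}$ is a normal family near every parameter — the marked point is \emph{passive}. A Zalcman-rescaling (Montel) argument then upgrades passivity to persistent preperiodicity, \emph{using} non-isotriviality to exclude the remaining alternative; hence $f^n(z)=f^m(z)$ over $\mathbf{K}$ for some $n>m$. The converse inclusion $\{\text{preperiodic}\}\subseteq\{\widehat{h}_f=0\}$ is immediate from $\widehat{h}_f\circ f=d\,\widehat{h}_f$.

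The remaining — and main — point is the uniform gap: for a suitable $\varepsilon_0>0$ the set $\{z:\widehat{h}_f(z)\leq\varepsilon_0\}$ is finite. Since $\widehat{h}_f=h_{\p^1}+O(1)$, this set lies inside a \emph{positive-dimensional} family of sections, so the naive height is of no help and the dynamics must be exploited. I would analyse $\widehat{h}_f(z)=\int_{\Gamma_{\sigma_z}}\widehat{T}_f$: the cohomology class of $\widehat{T}_f$ contributes a term $\lambda+\deg(\sigma_z)$, where $\lambda$ is the mass of the activity current of a generic constant section, which is strictly positive precisely because the family is non-constant (a fortiori non-isotrivial); and the only way $\widehat{h}_f(z)$ can drop below this value is that $\Gamma_{\sigma_z}$ meets one of the finitely many totally invariant (``special'') curves carrying the singular part of $\widehat{T}_f$, subtracting a non-negative local correction there. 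The \emph{similarity argument} is precisely the local analysis near such a special curve: after conjugating the family to a model (for instance $w\mapsto w^k$ near a persistently superattracting cycle), one estimates the local contribution and shows that a section picking up an arbitrarily small amount of $\widehat{T}_f$ must remain uniformly close to the special curve over all of $\mathcal{B}$, hence — by rigidity of holomorphic motions / persistence — must coincide with it and be persistently preperiodic. Since non-isotriviality forces the set of preperiodic points of $\p^1(\mathbf{K})$ to be finite, this yields the asserted finiteness. I expect this final step — converting ``arbitrarily close to a special curve'' into ``equal to it'', uniformly in $z$, together with the local model estimate — to be the principal obstacle; the identification of $\widehat{h}_f$ with the current mass and the passive/preperiodic dichotomy are structural and follow now-standard patterns (DeMarco; Dujardin--Favre; Yuan--Zhang).
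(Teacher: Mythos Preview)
First, a framing point: the paper does not give its own proof of this theorem. It is quoted as Baker's result, and the paper explicitly restricts itself to the case $\varepsilon_0=0$ (see the remark following Corollary~\ref{cor:Northcott_function_field}, item~5, and the open problem stated there). So for the full statement there is no ``paper's proof'' to compare to; what the paper does prove, in much greater generality, is that $\{\widehat{h}_f=0\}$ is finite and consists of preperiodic points.

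For that $\varepsilon_0=0$ part, your outline is broadly sound and overlaps with the paper's strategy. The reduction to $\mathbf{k}=\C$ is the same (the paper does \emph{not} reduce to $\dim\mathcal{B}=1$, but your Lefschetz-pencil reduction is fine for $\p^1$). Your identification $\widehat{h}_f(z)=\int_{\Gamma_{\sigma_z}}\widehat{T}_\mathcal{f}$ is exactly Theorem~\ref{tm:formulaheight} --- though your adelic phrasing is off: all places of $\mathbf{K}=\C(\mathcal{B})$ are non-archimedean, and the paper obtains the formula not by summing local heights but by a DSH cut-off argument controlling the mass escaping at the bad fibres (Proposition~\ref{Stable=bounded} and Lemma~\ref{goodgrowth}). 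Your passage from ``passive'' to ``preperiodic'' via Montel/Zalcman is the Dujardin--Favre/DeMarco route; the paper instead parametrises stable sections by a Chow-type variety and uses a Tan~Lei--style similarity lemma at \emph{repelling} periodic points (Lemma~\ref{lm:preperstable}) to show this family cannot foliate $\mathcal{X}$ unless the dynamics is isotrivial. Both approaches yield the $\varepsilon_0=0$ statement on $\p^1$; the paper's buys the higher-dimensional generalisation.

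For the $\varepsilon_0>0$ gap, however, your argument has genuine problems. The current $\widehat{T}_\mathcal{f}$ has \emph{continuous} potentials on $\mathcal{X}_\Lambda$ (this is built into its construction), so it has no ``singular part'' and there are no ``totally invariant special curves carrying'' one; the picture you sketch simply does not exist. Relatedly, the cohomological decomposition $\widehat{h}_f(z)=\lambda+\deg(\sigma_z)-(\text{corrections})$ does not go through as written, because $\widehat{T}_\mathcal{f}$ lives only over the good-reduction locus $\Lambda\subsetneq\mathcal{B}$ and mass escapes at the boundary --- this is precisely the difficulty the paper works hard to control, and it does so to prove the \emph{equality} $\widehat{h}_f=\int\widehat{T}_\mathcal{f}$, not a gap. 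Your ``similarity argument'' near a persistently superattracting cycle is not the paper's similarity lemma (which concerns repelling points and serves a different purpose), and it is unclear how your local model would produce a uniform lower bound on $\widehat{h}_f$ away from preperiodic sections. In short: the paper's complex-analytic methods do not yield the $\varepsilon_0>0$ gap, and your sketch does not fill it either; Baker's own proof obtains the gap by a genuinely different, non-archimedean analysis at the places of bad reduction.
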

This result of Baker holds in arbitrary characteristic (for a suitable definition of isotriviality) and was already proved by Benedetto~\cite{benedetto} for polynomials.  
In the case of abelian varieties over a global function field endowed with the multiplication by an integer $\geq 2$, the analogue of Baker's Theorem is due to Lang-N\'eron \cite{Lang-Neron}. In higher dimension, Chatzidakis and Hrushovski proved a model-theoretic version of Theorem~\ref{tm:Baker} in~\cite{Chatzidakis-Hrushovski} with $\varepsilon_0=0$ in arbitrary characteristic. They only consider polarized endomorphisms of smooth varieties which are primitive (see also \cite{ACL-dynamics} for a detailed exposition on this work). 

Furthermore, when $\mathbf{K}=\C(\mathcal{B})$ for $\mathcal{B}$ a smooth complex projective curve and $f\in\operatorname{End}(\p^1_\mathbf{K})$, DeMarco~\cite{demarco} gave a different proof. In addition, DeMarco showed that the (dynamical) stability of a marked point $a$ implies that the corresponding point $\mathrm{a}\in\mathbb{P}^1(\mathbf{K})$, satisfies $\widehat{h}_f(\mathrm{a})=0$. 
In this case, the Call-Silverman equality $\widehat{h}_f= h_{X,L} + O(1)$ shows that $\widehat{h}_f(\mathrm{a})=0$ if and only if the volume of the Zariski closure of the graph of the marked point $f^n(a)$ in $\mathcal{X}$ is bounded (for a given polarization). Having height $0$ is thus a global property, which implies dynamical stability; we shall see it is equivalent to stability.

\subsection{The geometric dynamical Northcott property and dynamical stability}

 Our first purpose is to give a new proof of the Northcott property on function fields of characteristic zero in higher dimension without the primitive algebraic assumption of \cite{Chatzidakis-Hrushovski}. When $\mathbf{K}=\C(\mathcal{B})$, we also extend DeMarco's results \cite{demarco} and show that a marked point is stable if and only if its canonical height is $0$. 
 
In the sequel, if $\mathcal{Y}$ is a subvariety of $\mathcal{X}$ which is flat over a regular part $\Lambda$, we denote by $\mathcal{f}(\mathcal{Y})$ the Zariski closure in $\mathcal{X}$ of $\mathcal{f}(\mathcal{Y}_\Lambda)$  so that the condition $\mathcal{f}(\mathcal{Y})= \mathcal{Y}$ in the theorem below is meaningful.
 \begin{Theorem}\label{tm:Pk_intro}
	Let $(\mathcal{X},\mathcal{f},\mathcal{L})$ be a non-isotrivial complex algebraic family of polarized endomorphisms over a normal complex projective variety $\mathcal{B}$. Let $(X,f,L)$ be the induced polarized endomorphism over the field $\mathbf{K}=\C(\mathcal{B})$. Let $\mathcal{N}$ be a very ample line bundle on $\mathcal{B}$ such that $\mathcal{M}:=\mathcal{L}\otimes \pi^*(\mathcal{N})$ is ample on $\mathcal{X}$. 
	\begin{enumerate}
		\item Let  $a:\mathcal{B}\dashrightarrow\mathcal{X}$ be a marked point with corresponding point $\mathrm{a}\in X(\C(\mathcal{B}))$, and let $\mathcal{C}_n(a)$ be the Zariski closure in $\mathcal{X}$ of the rational section $a_n:\lambda\mapsto f_\lambda^n(a(\lambda))$. Then
		\begin{equation}\label{stable_height}\tag{$\star$}
		a \ \mathrm{is \ stable} \iff \sup_{n\geq0} \deg_\mathcal{M}(\mathcal{C}_n(a))<\infty \iff \widehat{h}_f(\mathrm{a})=0.
		\end{equation}	
		\item  There exist a regular part $\Lambda$, a (possibly empty) proper subvariety $\mathcal{Y}$ of $\mathcal{X}$which is flat over $\Lambda$, with $\mathcal{f}(\mathcal{Y})=\mathcal{Y}$,  and integers $N\geq0$ and $D_0\geq1$ such that
\begin{enumerate}
\item for any irreducible component $\mathcal{V}$ of $\mathcal{Y}$, there exists $n\in \N^*$ with $\mathcal{f}^{n}(\mathcal{V})=\mathcal{V}$, and the family of polarized endomorphisms $(\mathcal{V},\mathcal{f}^{n}|_\mathcal{V},\mathcal{L}|_\mathcal{V})$ is isotrivial over $\Lambda$.
\item For any marked point $a:\mathcal{B}\dashrightarrow\mathcal{X}$ such that $((\mathcal{X},\mathcal{f},\mathcal{L}),a)$ is stable, and  which is regular over $\Lambda$, then:
\begin{enumerate}
\item the section $a_N:\lambda\mapsto f_\lambda^N(a(\lambda))$ is in fact a section of $\pi|_{\mathcal{Y}}:\mathcal{Y}\to\mathcal{B}$,
\item the Zariski closure $\mathcal{C}_0(a)$ of $a(\Lambda)$ satisfies $\deg_\mathcal{M}(\mathcal{C}_0(a))\leq D_0$.
\end{enumerate}
\end{enumerate}
	\end{enumerate}
\end{Theorem}

Using classical arguments, we can deduce the following corollary, which is what we call the \emph{geometric dynamical Northcott property} on arbitrary function fields of characteristic zero. 
\begin{corollary}\label{cor:Northcott_function_field} Let $\mathbf{K}$ be a function field of characteristic zero. Let $(X,f,L)$ be a non isotrivial polarized endomorphism on a normal projective variety defined over $\mathbf{K}$.
 Then, there exist a (possibly empty or reducible) subvariety $Y\subset X_{\mathbf{K}}$ with $f(Y)= Y$ and an integer $N\geq0$ such that
\begin{enumerate}
\item for any irreducible component $V$ of $Y$, there is $n\geq1$ such that $f^{n}(V)=V$, and the polarized endomorphism $(V,f^{n}|_V,L|_V)$ is isotrivial,
\item for any point $z\in X(\mathbf{K})$ with $\widehat{h}_{f}(z)=0$, we have $f^N(z)\in Y$.
\end{enumerate}
In particular, if $X_{\mathbf{K}}$ contains no periodic isotrivial subvariety of positive dimension, then $\{ z\in X(\mathbf{K}), \ \widehat{h}_{f}(z)=0 \}$ is finite and consists only of preperiodic points.
\end{corollary}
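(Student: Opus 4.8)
The plan is to deduce the corollary formally from Theorem~\ref{tm:Pk_intro}, after carrying out the reduction to $\mathbf{k}=\C$ described in the introduction; all the dynamical content sits in Theorem~\ref{tm:Pk_intro} itself. Write $\mathbf{K}=\mathbf{k}(\mathcal{B})$ and choose, as in the introduction, an algebraically closed subfield $\mathbf{k}'\subseteq\mathbf{k}$ of finite transcendence degree over $\overline{\mathbb{Q}}$ over which $X$, $f$, $L$ and a model $\mathcal{B}$ of $\mathbf{K}/\mathbf{k}$ are defined. Such a $\mathbf{k}'$ is countable, hence embeds into $\C$; fixing one embedding makes $(X,f,L)$ a complex algebraic family over $\mathcal{B}_\C:=\mathcal{B}\times_{\mathbf{k}'}\C$, i.e. a polarized endomorphism over $\C(\mathcal{B}_\C)$, and I would fix a model $(\mathcal{X}_\C,\mathcal{f}_\C,\mathcal{L}_\C)$ of it over $\mathcal{B}_\C$. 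Two elementary facts need to be recorded here. First, non-isotriviality is preserved: if $(X,f,L)$ were isotrivial over $\C(\mathcal{B}_\C)$ it would be conjugate, over some field extension, to a constant model, but the scheme parametrizing conjugacies between $(X,f,L)$ and a fixed constant model is of finite type over the algebraically closed field $\mathbf{k}'$, so a point of it over any extension of $\mathbf{k}'$ forces a point over $\mathbf{k}'$, whence isotriviality already over $\mathbf{K}$. Second, the property $\widehat{h}_f(z)=0$ is insensitive to all the field extensions that will occur, since over a function field the canonical height changes only by a positive multiplicative constant under a finite or a purely transcendental extension of the base.

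Next I would apply Theorem~\ref{tm:Pk_intro} to the non-isotrivial complex family $(\mathcal{X}_\C,\mathcal{f}_\C,\mathcal{L}_\C)$ over $\mathcal{B}_\C$, obtaining a proper subvariety $\mathcal{Y}\subseteq\mathcal{X}_\C$ flat over $\mathcal{B}_\C$ and integers $N\geq0$, $D_0\geq1$; by a standard spreading-out and specialization argument over the algebraically closed field $\mathbf{k}'$ these may be assumed defined over $\mathbf{k}'(\mathcal{B})$, so I set $Y:=\mathcal{Y}_{\mathbf{K}}\subseteq X_{\mathbf{K}}$ and keep the same $N$. Assertion~(1) is then the translation of assertion~(2)(a) of Theorem~\ref{tm:Pk_intro} to the generic fibre: irreducible components of $Y$ correspond to those of $\mathcal{Y}$, $f(Y)=Y$ follows from $\mathcal{f}(\mathcal{Y})=\mathcal{Y}$, and a periodic component of $Y$ is isotrivial over $\mathbf{K}$ precisely when the corresponding component of $\mathcal{Y}$ is isotrivial as a family over $\mathcal{B}_\C$. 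For assertion~(2), take $z\in X(\mathbf{K})$ with $\widehat{h}_f(z)=0$; it is defined over $\mathbf{k}''(\mathcal{B})$ for some finitely generated extension $\mathbf{k}''=\mathbf{k}'(\mathcal{A})$ of $\mathbf{k}'$, so after base change along $\mathbf{k}'\hookrightarrow\C$ it becomes a marked point, still of canonical height $0$, of the family over $\mathcal{A}_\C\times\mathcal{B}_\C$ pulled back from $(\mathcal{X}_\C,\mathcal{f}_\C,\mathcal{L}_\C)$. Restricting this marked point to the fibre over a very general $\alpha\in\mathcal{A}_\C$ yields a rational section $z_\alpha$ of $\mathcal{X}_\C\to\mathcal{B}_\C$ with $\widehat{h}_{f_\C}(z_\alpha)=0$ (a standard identity for canonical heights over a product base forces vanishing for very general $\alpha$, the heights being nonnegative). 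By assertion~(1) of Theorem~\ref{tm:Pk_intro} each such $z_\alpha$ is stable, and by assertion~(2)(b)(i) one gets $f^N(z_\alpha)\in\mathcal{Y}$ for very general $\alpha$; since such $\alpha$ are Zariski dense, $f^N(z)\in\mathcal{Y}\times\mathcal{A}_\C$, a condition which, as $\mathcal{Y}$ is defined over $\mathbf{k}'(\mathcal{B})$, is equivalent to the membership $f^N(z)\in Y$ in $X_{\mathbf{K}}$.

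Finally I would treat the ``in particular'' statement. Suppose $X_{\mathbf{K}}$ has no periodic isotrivial subvariety of positive dimension. Since $f$ is a finite morphism (being polarized with $d\geq2$) and $f(Y)=Y$, the map $f$ carries each irreducible component of $Y$ onto an irreducible component of the same dimension and hence permutes the finite set of components, so every component of $Y$ is periodic; by~(1) it is therefore isotrivial, hence a single point by hypothesis. Thus $Y$ is a finite, $f$-invariant set of preperiodic points. Any $z\in X(\mathbf{K})$ with $\widehat{h}_f(z)=0$ then satisfies $f^N(z)\in Y$, so $f^N(z)$ is periodic and $z$ is preperiodic; and $\{z\in X(\mathbf{K}):\widehat{h}_f(z)=0\}\subseteq(f^N)^{-1}(Y)$, which is finite because $f^N$ is a finite morphism and $Y$ is a finite set.

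The only real difficulty here is not dynamical --- all of that is contained in Theorem~\ref{tm:Pk_intro} --- but the descent bookkeeping of the first two paragraphs: one must realize the abstract function field $\mathbf{K}$ over a complex base \emph{uniformly in the point $z$}, so that $Y$ and $N$ are independent of $z$. The device for this is precisely to descend $X$, $f$, $L$ to an algebraically closed field $\mathbf{k}'$ of finite transcendence degree over $\overline{\mathbb{Q}}$, to embed $\mathbf{k}'$ into $\C$ once and for all, and to absorb the point-dependent finitely generated extension $\mathbf{k}''/\mathbf{k}'$ into an auxiliary complex factor $\mathcal{A}_\C$ of the base; as the resulting family over $\mathcal{A}_\C\times\mathcal{B}_\C$ is then pulled back from $\mathcal{B}_\C$, the uniform data furnished by Theorem~\ref{tm:Pk_intro} over $\mathcal{B}_\C$ does the job.
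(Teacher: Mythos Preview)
Your proof is correct and follows the same route as the paper's. The paper's own argument is three lines: take a model of $(X,f,L)$ over $\mathcal{B}$ and apply Theorem~\ref{tm:mainPk} (their restatement of part~(2) of Theorem~\ref{tm:Pk_intro}); the reduction from an arbitrary characteristic-zero function field to $\mathbf{K}=\C(\mathcal{B})$ is declared once in the introduction, and the ``in particular'' clause is left to the reader.

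You supply considerably more detail on both fronts, and you explicitly address a point the paper glosses over --- that $\mathcal{Y}$ and $N$ must be uniform in $z$ even though a given $z\in X(\mathbf{K})$ may require enlarging the constant field beyond $\mathbf{k}'$ --- via specialization over an auxiliary factor $\mathcal{A}_\C$. That extra care is legitimate. One small sharpening: your phrase ``a standard identity for canonical heights over a product base forces vanishing for very general $\alpha$'' is vaguer than it needs to be. Since the family over $\mathcal{A}_\C\times\mathcal{B}_\C$ is pulled back from $\mathcal{B}_\C$, the fibered Green current is pulled back as well, and $\widehat{h}_f(z)=0$ over the product base means (by Theorem~\ref{tm:formulaheight} or the equivalence in Theorem~\ref{tm:caracterization-stable}) that $\widehat{T}_{\mathcal{f}}\wedge[\mathcal{Z}]=0$; slicing at \emph{any} $\alpha$ then gives $\widehat{T}_{\mathcal{f}}\wedge[\mathcal{Z}_\alpha]=0$, hence $\widehat{h}_{f}(z_\alpha)=0$ for every $\alpha$, not just very general ones. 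With that replacement the specialization step is entirely clean.
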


\begin{remark}\normalfont
\begin{enumerate}
\item The case where $Y$ has an irreducible component of positive dimension over $\mathbf{K}$ cannot be excluded. Consider, for example, an  endomorphism of $\mathbb{P}^2_{\mathbf{K}}$ given by a non-isotrivial polynomial map of $\mathbb{A}^2_{\mathbf{K}}$, whose restriction to the line at infinity is independent of $\lambda$ (e.g.\ $f_\lambda(z_1,z_2):=(z_1^d,z_2^d+\lambda z_1)$ defined over $\mathbf{K}:=\mathbb{C}(\lambda)$). Then any non-periodic constant marked point with value on the line at infinity has canonical height zero.
\item Compared to Baker's Theorem~\ref{tm:Baker}, we focus on points of height zero. Our strategy is different from Baker's proof and we did not manage to go further. Recently, Y. Zhang proved a height gap in the case when $X=\mathbb{P}^2$ (see~\cite[Corollary~1.3]{yugang-polarized}).
\end{enumerate}
\end{remark}

\subsection{Strategy of the proof and dynamical height in term of the Green current.}
 We use the notation of Theorem~\ref{tm:Pk_intro}.  Let $\omega_\mathcal{B}$ be a K\"ahler form on $\mathcal{B}$ with cohomology class equal to $c_1(\mathcal{N})$ and let $\widehat{\omega}$ be a continuous closed positive $(1,1)$-form on $\mathcal{X}$ cohomologous to $c_1(\mathcal{L})$.

 To $(\mathcal{X},\mathcal{f},\mathcal{L})$, in \S2 we associate its \emph{fibered Green current} $\widehat{T}_\mathcal{f}$ (see \cite{Demailly_SMF} for currents on singular varieties). This classical object in complex dynamics satisfies the next properties:
 \begin{enumerate}
\item $\widehat{T}_\mathcal{f}$  is a closed, positive current on $\mathcal{X}$ of bidegree $(1,1)$,
\item as a $(k+1,k+1)$ current, $\widehat{T}_\mathcal{f}^{\dim X+1}=0$, see,~e.g.~\cite[Remark~4.7]{BB1},
\item for each $\lambda\in\Lambda$, the slice $\mu_{f_\lambda}$ of $\widehat{T}_\mathcal{f}^{\dim X}$ above $\lambda$ is the unique maximal entropy measure of $f_\lambda$, and
\item for each $\lambda\in\Lambda$, the periodic points of $f_\lambda$ equidistribute towards the measure $\mu_{f_\lambda}$. 
 \end{enumerate}

We now extend the notion of height from points to subvarieties. 
Take any irreducible subvariety $Z$ of dimension $0\leq \ell\leq k=\dim X$ of $X$ defined over $\mathbf{K}$ and let $\mathcal{Z}$ be the associated subvariety  of $\mathcal{X}$ so $\dim (\mathcal{Z}) = \ell + \dim(\mathcal{B})$. We denote by $[\mathcal{Z}]$ the current of integration on  $\mathcal{Z}$. Since $\mathcal{Z}$ is flat over a Zariski open subset of $\mathcal{B}$ which can be taken to be a regular part, $\mathcal{f}^n$ is a rational map which is well defined on a Zariski open set of $\mathcal{Z}$. Recall that $\mathcal{f}^n_*([\mathcal{Z}])$ the push-forward of $[\mathcal{Z}]$ by $\mathcal{f}^n$, defined as the adjoint operator of pull-back by $f^n$ on compactly supported smooth forms, is well defined and coincides with a multiple of $[\mathcal{f}^n(\mathcal{Z})]$, taking into account the multiplicity (see paragraph~\ref{multiplicity}).

 As used in \cite{Gubler,Gubler2,Faber,Cantat-Gao-Habegger-Xie}, the notion of height  $h_{X,L}$ can be extended to the irreducible subvariety $Z$ of dimension $\ell$ as the intersection number 
 \[ h_{X,L}(Z):=\left(\mathcal{Z}\cdot c_1(\mathcal{L})^{\ell+1}\cdot c_1(\pi^*\mathcal{N})^{\dim\mathcal{B}-1}\right),\] 
and it can be computed as 
	\begin{align*}
	h_{X,L}(Z) =\int_{\mathcal{X}_\Lambda}[\mathcal{Z}]\wedge\widehat{\omega}^{\ell+1}\wedge (\pi^*\omega_\mathcal{B})^{\dim\mathcal{B}-1}.
	\end{align*}
Note again that the height function $h_{X,L}$ depends not only on $L$ but also on the polarization $\mathcal{N}$ on $\mathcal{B}$. As in \cite[Theorems 10.9 \& 11.14]{Gubler}, the canonical height  $\hat{h}_{f}(Z)$ of $Z$  is defined by
\[ \hat{h}_{f}(Z) =\lim_{n\to \infty} \frac{1}{d^{(\ell+1)n }} h_{X,L}((f_*)^n(Z)).\]
 So, one could hope that 
\begin{align*}
\hat{h}_{f}(Z)  &= \lim_{n\to+\infty}  \frac{1}{d^{(\ell+1)n }} \int_{\mathcal{X}_\Lambda} (\mathcal{f}^n)_*[\mathcal{Z}]\wedge\widehat{\omega}^{\ell+1}\wedge (\pi^*\omega_\mathcal{B})^{\dim\mathcal{B}-1} \\
 &= \int_{\mathcal{X}_\Lambda} \lim_{n\to+\infty}  \frac{1}{d^{(\ell+1)n }} [\mathcal{Z}]\wedge(\mathcal{f}^n)^*(\widehat{\omega}^{\ell+1})\wedge (\pi^*\omega_\mathcal{B})^{\dim\mathcal{B}-1}, 
\end{align*}
and use the convergence $\lim_{n\to \infty} d^{-(\ell+1)n } (\mathcal{f}^n)^*(\widehat{\omega}^{\ell+1})\to \widehat{T}_\mathcal{f}^{\ell+1}$ in the sense of currents in order to get the following theorem.

\begin{Theorem}\label{tm:formulaheight} 
Let $\mathbf{K}$ be the field of rational functions of a normal complex projective variety $\mathcal{B}$. Let $(X,f,L)$  be a polarized endomorphism over $\mathbf{K}$. Let $(\mathcal{X},\mathcal{f},\mathcal{L})$ be an associated model with regular part $\Lambda$. Let $\mathcal{N}$ be an ample line bundle on $\mathcal{B}$ so that $\mathcal{M}:=\mathcal{L}\otimes \pi^*(\mathcal{N})$ is ample. Let $Z$ be any irreducible subvariety of dimension $0\leq \ell\leq k=\dim X$ of $X$ defined over $\bar{\mathbf{K}}$. Let $\mathcal{Z}$ be the corresponding subvariety of $\mathcal{X}$. Then, the canonical height satisfies
\[\widehat{h}_f(Z)=\int_{\mathcal{X}_\Lambda}[\mathcal{Z}]\wedge\widehat{T}_\mathcal{f}^{\ell+1}\wedge(\pi^*\omega_\mathcal{B})^{\dim\mathcal{B}-1},\]
where $\omega_{\mathcal{B}}$ is any K\"ahler form on $\mathcal{B}$ cohomologous to $c_1(\mathcal{N})$.
\end{Theorem}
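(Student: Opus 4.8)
The plan is to make rigorous the heuristic computation sketched before the theorem statement, the key point being to justify the interchange of limit and integral despite the loss of mass on the bad-reduction locus $\pi^{-1}(\mathcal{B}\setminus\Lambda)$. First I would fix a finite atlas of local charts on $\mathcal{X}$ near the boundary and produce quasi-potentials: writing $\widehat{\omega}$ for the chosen form cohomologous to $c_1(\mathcal{L})$, one has $d^{-n}(\mathcal{f}^n)^*\widehat{\omega}\to\widehat{T}_{\mathcal{f}}$ with local potentials $u_n:=\sum_{j<n}d^{-j}\,g\circ \mathcal{f}^j$ converging in $L^1_{loc}(\mathcal{X}_\Lambda)$ and even locally uniformly away from the indeterminacy/boundary locus, where $g$ is a fixed local potential of $d^{-1}\mathcal{f}^*\widehat{\omega}-\widehat{\omega}$. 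The crucial input is the \emph{naive degeneration estimate} for these potentials near $\pi^{-1}(\mathcal{B}\setminus\Lambda)$: there is a constant $C>0$ and, near each boundary point, a local defining function $\sigma$ of $\pi^{-1}(\mathcal{B}\setminus\Lambda)$ such that $|u_n|\leq C\,|\log|\sigma||+C$ uniformly in $n$ (this is exactly the type of bound extracted from \cite[Sections 3 \& 4]{demarco}).

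Next I would introduce a cut-off: choose $\chi=\chi_\epsilon$ a nonnegative function equal to $1$ outside an $\epsilon$-neighborhood of the boundary and $0$ on a smaller neighborhood, but — and this is the point of the DSH formalism of Dinh--Sibony — arrange that $\chi_\epsilon$ is quasi-plurisubharmonic-minus-quasi-plurisubharmonic with $dd^c\chi_\epsilon$ controlled, e.g. built from $\log\max(|\sigma|^2,\epsilon^2)$ type functions whose $dd^c$ has mass $O(1)$ independent of $\epsilon$ while supported in the shrinking neighborhood. I would then write
\[
\frac{1}{d^{(\ell+1)n}}\int_{\mathcal{X}_\Lambda}(\mathcal{f}^n)^*(\widehat{\omega}^{\ell+1})\wedge(\pi^*\omega_{\mathcal{B}})^{\dim\mathcal{B}-1}\wedge[\mathcal{Z}]
\]
and compare it, via the cut-off $\chi_\epsilon$, with the corresponding integral of $\chi_\epsilon$ times the same form. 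On the region $\{\chi_\epsilon=1\}$ one has genuine local uniform convergence of $d^{-(\ell+1)n}(\mathcal{f}^n)^*\widehat{\omega}^{\ell+1}$ to $\widehat{T}_{\mathcal{f}}^{\ell+1}$ (here one needs that the wedge powers are well defined, which follows since $\widehat{T}_{\mathcal{f}}$ has continuous local potentials on $\mathcal{X}_\Lambda$ and $\widehat{T}_{\mathcal{f}}^{k+1}=0$, and that $[\mathcal{Z}]$ does not charge the boundary); the error from the region $\{\chi_\epsilon<1\}$ is estimated, \emph{uniformly in $n$}, using integration by parts: moving $dd^c$ from the potentials $u_n$ onto $\chi_\epsilon\wedge(\pi^*\omega_{\mathcal{B}})^{\dim\mathcal{B}-1}\wedge[\mathcal{Z}]$, the degeneration estimate $|u_n|\leq C|\log|\sigma||+C$ combined with the $L^1$ (against the boundary measure) integrability of $\log|\sigma|$ and the bounded-mass property of $dd^c\chi_\epsilon$ yields a bound of the form $O(\eta(\epsilon))$ where $\eta(\epsilon)\to0$ as $\epsilon\to0$, independently of $n$. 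On the arithmetic side one uses that $h_{X,L}((f_*)^n Z)=\int_{\mathcal{X}_\Lambda}\widehat{\omega}^{\ell+1}\wedge(\pi^*\omega_{\mathcal{B}})^{\dim\mathcal{B}-1}\wedge(\mathcal{f}^n)_*[\mathcal{Z}]$ (push–pull) so dividing by $d^{(\ell+1)n}$ and letting $n\to\infty$ then $\epsilon\to0$ gives $\widehat{h}_f(Z)=\int_{\mathcal{X}_\Lambda}\widehat{T}_{\mathcal{f}}^{\ell+1}\wedge(\pi^*\omega_{\mathcal{B}})^{\dim\mathcal{B}-1}\wedge[\mathcal{Z}]$.

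I expect the main obstacle to be the uniform-in-$n$ control of the boundary contribution: one must simultaneously (i) produce the degeneration estimate for the potentials $u_n$ with a constant independent of $n$ — which requires tracking how the local potential $g$ of $d^{-1}\mathcal{f}^*\widehat{\omega}-\widehat{\omega}$ behaves under the dynamics and summing the geometric series carefully near the indeterminacy locus of $\mathcal{f}$ and over the bad fibers, and (ii) choose the cut-off $\chi_\epsilon$ in the DSH class so that $\|dd^c\chi_\epsilon\|$ stays bounded while its support shrinks, which is exactly where the Dinh--Sibony DSH framework (as opposed to a mere $C^\alpha$ cut-off) is essential because it is adapted to the complex, rather than merely metric, structure near the boundary divisor. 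A secondary technical point is dealing with the possible singularities of $\mathcal{X}$ and $\mathcal{Z}$ over the boundary and with the indeterminacy set of $\mathcal{f}$: one resolves this by passing to a suitable model (blow-up) where $\mathcal{f}$ is a morphism in a neighborhood of the relevant fibers, checking that none of the quantities $h_{X,L}$, $\widehat{h}_f$, or the integral against $\widehat{T}_{\mathcal{f}}^{\ell+1}$ changes, since they are all birational invariants of the model away from $\Lambda$ and $[\mathcal{Z}]$, $\widehat{T}_{\mathcal{f}}$ pull back correctly.
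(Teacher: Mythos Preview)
Your overall strategy matches the paper's: a DSH cut-off, integration by parts, and a degeneration estimate for the potentials. However, the specific claim that the boundary error is $O(\eta(\epsilon))$ with $\eta(\epsilon)\to 0$ \emph{uniformly in $n$} does not follow from the ingredients you list, and this is the heart of the matter.

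With your $u_n$ (the potential of $d^{-n}(\mathcal{f}^n)^*\widehat\omega-\widehat\omega$) you correctly get $|u_n|\le C|\log|\sigma||+C$ uniformly in $n$. After integration by parts the boundary term is controlled by an integral of $|u_n|$ against $dd^c\chi_\epsilon\wedge(\cdots)$. If $dd^c\chi_\epsilon$ has \emph{bounded} mass supported where $|\sigma|\asymp\epsilon$, then $|u_n|\asymp|\log\epsilon|$ there and the product blows up; if instead (as in the paper's construction $\Psi_A$ with $A\sim|\log\epsilon|$) the mass of $dd^c\chi_\epsilon$ is $O(1/|\log\epsilon|)$, the product is $O(1)$ --- still not $o(1)$. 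The ``$L^1$ integrability of $\log|\sigma|$'' you invoke is with respect to a fixed ambient measure, not with respect to $dd^c\chi_\epsilon$, so no dominated-convergence argument applies. In short, your order of limits (first $n\to\infty$ on the cut-off region, then $\epsilon\to0$) cannot be closed with the estimate on $u_n$ alone.

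The paper's fix is to compare $d^{-n}(\mathcal{f}^n)^*\widehat\omega$ directly with $\widehat T_{\mathcal f}$ rather than with $\widehat\omega$: the relevant potential is $\phi_n:=\sum_{j\ge n}d^{-j}g\circ\mathcal{f}^j$, which carries an extra factor $d^{-n}$, namely $|\phi_n|\le d^{-n}(C\log^+\|\lambda\|+C')$ (Lemma~\ref{goodgrowth}). Pairing this against $\Psi_A$, whose $dd^c$ has mass $O(1/A)$ while $\sup_{\mathrm{supp}(dd^c\Psi_A)}|\phi_n|=O(d^{-n}A)$, gives an error $O(d^{-n})$ \emph{uniformly in $A$}. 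One then sends $A\to\infty$ first (so the cut-off becomes trivial and the estimate holds for the full integral over $\mathcal X_\Lambda$), and only afterwards $n\to\infty$. This is packaged as Proposition~\ref{Stable=bounded}; for $\ell\ge 1$ the higher wedge powers require iterating this estimate, which is why that proposition has two parts and why a single integration by parts as you describe is not enough.

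Two minor points. No blow-up or resolution near the bad fibers is needed: the paper observes that $\Lambda$ is affine (it maps to the affine variety of degree-$d$ endomorphisms of $\mathbb P^N$), embeds it in $\mathbb A^M$, and works with $\log^+\|\lambda\|$ globally rather than with local defining functions $\sigma$. And the cut-off is a function on $\Lambda$ pulled back by $\pi$, not a function on $\mathcal X$.
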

To make the above strategy work, one has to deal with the difficulty that some mass might be added on the boundary $\pi^{-1}(\mathcal{B}\backslash \Lambda)$. This possible loss of mass corresponds a priori to the so-called places of bad reduction of the map $f$. DeMarco's idea for $X=\p^1$ is to do analysis at those places in a delicate way \cite[Sections 3 $\&$ 4]{demarco}. In here, we look instead at what happens away from  $\pi^{-1}(\mathcal{B}\backslash \Lambda)$ using an appropriate cut-off function built with a naive degeneration's estimate of potentials of the current $\widehat{T}_\mathcal{f}$ coming from the Nullstellensatz (such an estimate is present in DeMarco's article). This gives a new proof of the existence of the canonical height, originally due to Gubler \cite{Gubler}, and the formula in Theorem~\ref{tm:formulaheight} computes that height for any irreducible subvariety $Z$ of $X$ in term of the fibered Green current. By definition a current is in the dual space of smooth forms with compact support. Hence stability, the vanishing of a current, is local by nature. Theorem~\ref{tm:formulaheight} shows that stability 
 is in fact a global notion. The cut-off function we use is a DSH function, the strength of such objects, introduced by Dinh and Sibony~\cite{DinhSibonysuper}, is that their definition involves the complex structure of the space (which is not the case for $C^\alpha$-functions). 

 The formula for the canonical height in Theorem~\ref{tm:formulaheight} is known in the case of rational sections of elliptic surfaces where $f$ is the multiplication by $n\geq 2$ \cite[Theorem 3.2]{CDMZ} and has been investigated in other special
cases where $X$ is an abelian variety (the Betti form of a family of abelian varieties is exactly the fibered Green current of the fiberwise multiplication by $2$); see for example \cite{Cantat-Gao-Habegger-Xie}.

\medskip

Extending the notion of stability to subvarieties (see \S\ref{sec:bifcur}, Definition~\ref{def_stable} and also Definition~\ref{algebraic_pair}), we shall obtain the following corollary which proves \eqref{stable_height} in Theorem~\ref{tm:Pk_intro} (see \S\ref{multiplicity} for the definition of $\deg(f^n|_Z)$).
\begin{corollary}\label{cor:stable-h0_intro} Under the hypothesis of Theorem~\ref{tm:formulaheight}, the pair $((\mathcal{X},\mathcal{f},\mathcal{L}),[\mathcal{Z}])$ is stable if and only if $\widehat{h}_{f}(Z)=0$ if and only if there exists $C>0$ such that $d^{\ell n}/C\leq \deg_{\mathcal{M}}(\mathcal{f}^n(\mathcal{Z}))\cdot \mathrm{deg}(f^n|_{Z}) \leq C d^{\ell n}$.
\end{corollary}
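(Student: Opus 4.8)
The plan is to deduce Corollary~\ref{cor:stable-h0_intro} directly from Theorem~\ref{tm:formulaheight} together with a careful book-keeping of the degree/entropy asymptotics, exploiting that the right-hand side of the height formula is the mass of a positive current restricted to an open set. First I would recall that, by definition of the fibered Green current, $\widehat T_\mathcal{f}$ is a closed positive $(1,1)$-current; hence $\widehat T_\mathcal{f}^{\ell+1}\wedge[\mathcal{Z}]\wedge(\pi^*\omega_\mathcal{B})^{\dim\mathcal{B}-1}$ is a positive measure on $\mathcal{X}_\Lambda$, and Theorem~\ref{tm:formulaheight} gives $\widehat h_f(Z)\geq 0$ with $\widehat h_f(Z)=0$ if and only if this measure vanishes identically on $\mathcal{X}_\Lambda$. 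The equivalence ``$\widehat h_f(Z)=0\iff((\mathcal{X},\mathcal{f},\mathcal{L}),[\mathcal{Z}])$ stable'' should then come from unwinding Definitions~\ref{def_stable} and~\ref{algebraic_pair}: stability of the pair is defined (in the paper's earlier sections) precisely as the vanishing of the appropriate bifurcation-type current attached to $\mathcal{Z}$, which by the very construction in the proof of Theorem~\ref{tm:formulaheight} is the current whose mass is computed above. So the first two equivalences are essentially a repackaging of the theorem and its proof, and I would state them as such, pointing to the relevant formula in the body of the proof of Theorem~\ref{tm:formulaheight} rather than reproving anything.

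The substantive part is the third equivalence, relating vanishing height to the degree asymptotics $\deg_{\mathcal{M}}(\mathcal{f}^n(\mathcal{Z}))\cdot\deg(f^n|_Z)\asymp d^{\ell n}$. Here I would argue as follows. On one hand, by the projection formula and functoriality of heights, $h_{X,L}((f_*)^n Z)$ and $d^{(\ell+1)n}\widehat h_f(Z)$ differ by a bounded amount uniformly in $n$ — this is the standard Call–Silverman telescoping estimate applied to subvarieties — so $h_{X,L}(f^n(\mathrm a))$ stays bounded iff $\widehat h_f(Z)=0$. On the other hand, writing $h_{X,L}(\mathcal{f}^n(\mathcal{Z}))$ as an intersection number against $c_1(\mathcal{L})^{\ell+1}\cdot c_1(\pi^*\mathcal{N})^{\dim\mathcal{B}-1}$, and $\deg_{\mathcal{M}}(\mathcal{f}^n(\mathcal{Z}))$ as an intersection against $c_1(\mathcal{M})^{\ell}\cdot$(ample on base), one compares the two by expanding $\mathcal{M}=\mathcal{L}\otimes\pi^*\mathcal{N}$ binomially. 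The $\mathcal{L}$-degree of $\mathcal{f}^n(\mathcal{Z})$ along a generic fiber grows like $d^{\ell n}/\deg(f^n|_Z)$ by the very definition of a polarized endomorphism and the pull-back relation $f^*L\simeq L^{\otimes d}$ on the fibers, while the contribution of $\pi^*\mathcal{N}$ factors measures exactly the ``horizontal'' part, i.e. the height. Keeping track of these two contributions shows $\deg_{\mathcal{M}}(\mathcal{f}^n(\mathcal{Z}))\cdot\deg(f^n|_Z)=d^{\ell n}+O\!\left(n\cdot d^{(\ell-1)n}\,\text{-type error}\right)+\big(\text{height term}\big)\cdot d^{?}$; boundedness of the height term is then equivalent to the clean asymptotic $\asymp d^{\ell n}$, and unboundedness forces strictly faster growth. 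I would isolate the bookkeeping of these two intersection-theoretic contributions into a short lemma-style computation.

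The main obstacle, as I see it, is being careful about the failure of naive semicontinuity on the boundary $\pi^{-1}(\mathcal{B}\setminus\Lambda)$: the degree $\deg_{\mathcal{M}}(\mathcal{f}^n(\mathcal{Z}))$ is computed on the compactification $\mathcal{X}$, whereas the current mass in Theorem~\ref{tm:formulaheight} lives on $\mathcal{X}_\Lambda$, and a priori mass can escape to the bad fibers. The whole point of the DSH cut-off machinery used to prove Theorem~\ref{tm:formulaheight} is that this escaping mass is controlled — it contributes the ``$+O(1)$'' (after normalization) discrepancy between $h_{X,L}(\mathcal{f}^n(\mathcal{Z}))$ and $d^{(\ell+1)n}\int_{\mathcal{X}_\Lambda}\widehat T_\mathcal{f}^{\ell+1}\wedge[\mathcal{Z}]\wedge(\pi^*\omega_\mathcal{B})^{\dim\mathcal{B}-1}$ — so I expect to be able to quote the relevant estimate from that proof verbatim rather than redo the local analysis at places of bad reduction. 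Once that uniform boundedness of the boundary contribution is granted, the three-way equivalence follows formally, and the final sentence ``in particular'' statements in Theorem~\ref{tm:Pk_intro}(1) are just the special case $\ell=0$, $\mathcal{Z}=a(\Lambda)$.
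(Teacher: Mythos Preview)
Your overall approach---expand $\mathcal{M}=\mathcal{L}\otimes\pi^*\mathcal{N}$ binomially, separate the ``vertical'' fiber-degree contribution (which always grows like $d^{\ell n}$) from the ``horizontal'' height contribution, and control boundary escape via the DSH estimates from the proof of Theorem~\ref{tm:formulaheight}---is exactly what the paper does, in the proof of Theorem~\ref{tm:caracterization-stable}. However, there is a concrete error in your middle paragraph. You claim that $h_{X,L}((f_*)^n Z)$ and $d^{(\ell+1)n}\widehat h_f(Z)$ differ by a \emph{bounded} amount uniformly in $n$, calling this ``the standard Call--Silverman telescoping estimate applied to subvarieties.'' This is false for $\ell\geq 1$: the telescoping argument gives an error of size $O(d^{\ell n})$, not $O(1)$. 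The reason is that $\mathcal{f}^*\mathcal{L}\simeq\mathcal{L}^{\otimes d}$ holds only up to a vertical divisor on the total space $\mathcal{X}$, and when you raise to the $(\ell+1)$-st power and iterate, the cross-terms involve intersections like $\big(\mathcal{Z}\cdot(\mathcal{f}^n)^*\mathcal{L}^j\cdot(\text{vertical})\cdot\pi^*\mathcal{N}^{\dim\mathcal{B}-1}\big)$ for $j\leq\ell$, which themselves grow like $d^{jn}$.

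This matters because $O(d^{\ell n})$ is precisely the scale you are trying to isolate in the statement $\deg_\mathcal{M}(\mathcal{f}^n(\mathcal{Z}))\cdot\deg(f^n|_Z)\asymp d^{\ell n}$, so you cannot absorb it into an error term; you must show it is genuinely $O(d^{\ell n})$ and not larger. The paper handles this not by quoting one estimate from the proof of Theorem~\ref{tm:formulaheight} but by an induction using both parts of Proposition~\ref{Stable=bounded}: assuming $\widehat T_\mathcal{f}^{\ell+1}\wedge[\mathcal{Z}]=0$, one shows successively that $\|(\mathcal{f}^n)^*(\widehat\omega^{\ell+1-s})\wedge\widehat T_\mathcal{f}^{\,s}\wedge[\mathcal{Z}]\wedge(\pi^*\omega_\mathcal{B})^\tau\|=O(d^{n(\ell-s)})$ for each $s$, then feeds these bounds back into point~2 of that proposition to control every term of the binomial expansion. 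Your sketch gestures at this (``isolate the bookkeeping into a short lemma-style computation''), but the specific claim you make in its place is wrong, and the actual induction is the substance of the argument rather than something that ``follows formally.''
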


Corollary~\ref{cor:stable-h0_intro} is a direct consequence of Theorem~\ref{tm:caracterization-stable} below.

\subsection{The geometric dynamical Bogomolov property}

We say that an irreducible subvariety $Z\subset X_{\bar{\mathbf{K}}}$ comes from an \emph{isotrivial factor} of $(X,f,L)$ if there exist 
\begin{itemize}
\item an isotrivial polarized endomorphism $(Y,g,E)$,
\item a subvariety $V\subset X_{\bar{\mathbf{K}}}$ and an integer $k\geq1$ with $f^k(V)=V$,
\item a dominant rational map $p:V\dashrightarrow Y$ such that $p\circ(f^k|_{V})=g\circ p$, and
\item an integer $N\geq1$ and an isotrivial subvariety $W_0\subset Y$, such that $f^N(Z)=p^{-1}(W_0)$. 
\end{itemize}

We also adapt the following definition of Ghioca and Tucker \cite{Ghioca_Tucker} of \emph{special subvariety} in the number field case to the function field case.
\begin{definition}
	With the above notations, we say that $Z$ is \emph{$f$-special} if there exist an integer $n$, a polarized endomorphism $(X,\Psi,L)$ a subvariety $Y$ with $Z\subseteq Y \subseteq X$ such that
	\begin{itemize}
		\item $f^n(Y)=Y=\Psi(Y)$;
		\item $f^n\circ \Psi = \Psi \circ f^n$ on $Y$;
		\item either $Z$ is preperiodic under $\Psi$ or $Z$ comes from an isotrivial factor of $(X,\Psi,L)$.
	\end{itemize}   
	\end{definition}
We propose the following conjecture.
\begin{conjecture}[geometric dynamical Bogomolov]\label{Conjecture_Bogomolov} Let $\mathbf{K}$ be a function field of characteristic zero. Let $(X,f,L)$ be a polarized endomorphism defined over $\mathbf{K}$, where $X$ is normal.
	 Let $Z\subset X_{\bar{\mathbf{K}}}$ be an irreducible subvariety and assume that, for any $\varepsilon>0$, the set $Z_\varepsilon:=\{x\in Z(\bar{\mathbf{K}})\, : \ \widehat{h}_f(x)<\varepsilon\}$
is Zariski dense in $Z$. Then, $Z$ is $f$-special.
\end{conjecture}

Note that, if $X=A$ is a non-isotrivial abelian variety and if $f$ is the multiplication by an integer $n\geq2$, this reduces to the geometric Bogomolov conjecture proposed by Yamaki~\cite[Conjecture 0.3]{Yamaki}, where the isotrivial factor is the $\mathbf{K}/\mathbf{k}$-trace of $A$, whence it is known to hold by \cite{Cantat-Gao-Habegger-Xie} (see also \cite{Xie_Yuan} for the general case of arbitrary characteristic).
Note also that the case of isotrivial factor occurs, e.g.\  if $f$ preserves globally the fibers of a fibration of which $Z$ is a fiber, we actually have $\widehat{h}_f(Z)=0$, even though it may not be preperiodic under iteration of $f$, see Section~\ref{sec:Bogomolov} for more details.

 As observed by Ghioca, Tucker and Zhang \cite{GTZ}, over a number field, if $E$ is a CM elliptic curve and $\omega_1, \omega_2 \in \mathrm{End}(E)$ are multiplicatively independent with $|\omega_1|=|\omega_2|>1$, then  $(\omega_1,\omega_2)$ defines a polarized endomorphism of $E\times E$ for which the diagonal $\Delta$ is not preperiodic but contains infinitely many points of height zero (hence its height is zero). This explains the definition of special subvariety in this setting where one can take $\Psi$ to be $([2],[2])$ and $Y=E\times E$. As explained to us by Fakhruddin, one can construct similar examples over function fields using arguments of Shimura \cite{Shimura}: given an imaginary quadratic field $\mathbb{K}$ and an integer $k> 2$, there exist a non-isotrivial family $\mathcal{A} \to S$ of abelian varieties of relative dimension $k$ with $\dim S >0$ and such that the endomorphism ring of $A_\eta$ is the ring of integers of $\mathbb{K}$.

\medskip

Let $\mathcal{f}:\p^k\times\Lambda\to\p^k\times\Lambda$ be an algebraic family of complex rational maps and let $f:\p^k_\mathbf{K}\to\p^k_\mathbf{K}$ be the induced endomorphism defined over $\mathbf{K}=\C(\Lambda)$. The fundamental work of Berteloot, Bianchi and Dupont \cite{BBD} sets up a good notion of stability, \emph{$J$-stability}: one way to formulate the $J$-stability of the family $\mathcal{f}$ is to require that $\widehat{h}_f(\mathrm{Crit}(f))=0$, where $\mathrm{Crit}(f)$ is the critical divisor of $f$. 
Can one prove a rigidity property for non isotrivial $J$-stable families?
Apart from the dimension $1$ where McMullen did characterize stable algebraic families of rational maps \cite{McMullen-families}, this is largely open.
 Solving Conjecture~\ref{Conjecture_Bogomolov} would be a first step toward proving that such a family is post-critically finite.

\subsection{Notations and conventions}\label{multiplicity}
In the whole article, if $\mathbf{k}$ is any field of characteristic zero, $X$ and $Y$ are $\mathbf{k}$-varieties, $Y$ irreducible, and $f:X\to Y$ is a finite morphism, we let $\deg(f)$ be the topological degree of $f$, i.e. 
\[\deg(f)=\mathrm{Card}\left(\{x\in X(\bar{\mathbf{k}})\, : \ f(x)=y\}\right),\]
for a general point $y\in Y(\bar{\mathbf{k}})$.
For every integral subvariety $Z\subseteq X$, since $f$ is proper, the image of $Z$ is again a subvariety we denote by $f(Z)$. We also let $f_*(Z)$ be the push-forward of $Z$ by $f$. It is the cycle of $Y$ given by
\[f_*(Z)=\deg(f|_Z)\cdot f(Z).\]
Here $\deg(f|_Z)$ is the topological degree of the map $f|_Z:Z\to f(Z)$.

When $\mathbf{k}=\mathbb{C}$ is the field of complex numbers, we also denote by $[Z]$ the integration current on the subvariety $Z$, i.e. for any test form $\psi$ on $X$ of bidegree $(\dim Z,\dim Z)$, we let
\[\int_X [Z]\wedge \psi:=\int_{Z_\mathrm{reg}}\psi|_Z,\]
since a theorem of Lelong says it defines a positive closed current \cite[Theorem 2.7, p.140]{Demailly}.
We also denote by $f_*[Z]$ the push-forward current, i.e. the current defined by
\[\int_{Y} \left(f_*[Z]\right)\wedge \phi=\int_X [Z]\wedge(f^*\phi)\]
for any test form $\phi$ on $Y$, where $f^*\phi$ is the pullback of $\phi$. We can also compute that $f_*[Z]=\deg(f|_Z)\cdot [f(Z)]$. That way, $f_*[Z]= [f_*(Z)]=\deg(f|_Z)[f(Z)]$.

\section{Analytic families of polarized endomorphisms}\label{section2}

Since isotrivial subvarieties can be singular, it is natural to study polarized endomorphisms of possibly singular projective varieties. Nevertheless, we will rely on an important result of Fakhruddin~\cite{Fakhruddin} which says that such polarized morphisms can always be seen as the restriction of a polarized endomorphism defined on some $\p^N$. This will help us build the fibered Green current of a family of polarized endomorphisms. Indeed, it will imply  that we are in the settings of \cite{Demailly_SMF}: we will be dealing with quasi-plurisubharmonic (qpsh for short) functions on $X$ which are the restrictions of qpsh functions defined on a global neighborhood of $X$ in $\p^N(\C)$.

\subsection{On a key result of Fakhruddin}\label{sec:fak}

Let $X$ be an irreducible complex projective variety.
Recall that from a very ample line bundle $L$ on $X$, we can produce an embedding $\iota:X\hookrightarrow \p(H^0(X,L)^\vee)=\p^N$, where $N+1=\dim H^0(X,L)^\vee$ and $L$ may then be identified with the pullback $\iota^*\mathcal{O}(1)$. We also define a hermitian metric on $X$ by setting $\omega_X$ to be the restriction of  $\omega_{\mathrm{FS}}$ to $X$, where $\omega_{\mathrm{FS}}$ is the Fubini-Study form on $\mathbb{P}^N$. In this text, when fixing a K\"ahler form on $X$, we shall always take such a Fubini-Study form $\omega_X$.

\medskip

Thanks to the next result of Fakhruddin~\cite[Corollary~2.2]{Fakhruddin}, we can find an embedding adapted to the study of the dynamics of a polarized endomorphism:

\begin{proposition}\label{prop:fak}
	Let $(X,f,L)$ be a polarized endomorphism over an infinite field $K$. Then there exist $N\geq1$, an embedding $\iota:X\hookrightarrow \p^N_K$, an integer $e\geq1$ and an endomorphism $F:\p^N_K\to\p^N_K$ such that
	\[\iota \circ f=F\circ \iota \ \text{on} \ X \quad \text{and} \quad L^{\otimes e}=\iota^*\mathcal{O}_{\mathbb{P}^N}(1).\]
\end{proposition}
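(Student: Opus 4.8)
\textbf{Proof proposal for Proposition~\ref{prop:fak}.}

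The statement to prove is a consequence of \cite[Corollary~2.2]{Fakhruddin}, so the plan is essentially to recall how Fakhruddin's argument specializes to our situation and to check that no extra hypotheses are needed. The key point is that $L$ is ample and $f^*L\simeq L^{\otimes d}$ with $d\geq 2$. First I would fix a positive integer $m$ large enough that $L^{\otimes m}$ is very ample and that the natural map $\mathrm{Sym}^j H^0(X,L^{\otimes m})\to H^0(X,L^{\otimes jm})$ is surjective for all $j\geq 1$ (projective normality of $X$ in the embedding given by $L^{\otimes m}$, which can be arranged after replacing $m$ by a multiple, e.g.\ by Mumford's results on Castelnuovo--Mumford regularity). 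Let $\iota:X\hookrightarrow\p_\mathbf{k}^N$ be the closed embedding defined by the complete linear system $|L^{\otimes m}|$, so $\iota^*\mathcal{O}(1)\simeq L^{\otimes m}$.

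Next I would use the polarization to produce $F$. Since $f^*L\simeq L^{\otimes d}$, we get $f^*(L^{\otimes m})\simeq (L^{\otimes m})^{\otimes d}$, hence $f^*\iota^*\mathcal{O}(1)\simeq \iota^*\mathcal{O}(d)$. Choosing a basis $s_0,\dots,s_N$ of $H^0(X,L^{\otimes m})$, the sections $f^*s_0,\dots,f^*s_N$ lie in $H^0(X,L^{\otimes dm})$; by the surjectivity arranged above, each $f^*s_i$ can be written as a degree-$d$ polynomial in $s_0,\dots,s_N$, say $f^*s_i=P_i(s_0,\dots,s_N)$ with $P_i$ homogeneous of degree $d$. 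This is exactly the assertion that the rational map $F:\p^N\dashrightarrow\p^N$ given by $[P_0:\cdots:P_N]$ satisfies $F\circ\iota=\iota\circ f$ on $X$. The one subtlety is that a priori $F$ is only a rational map on $\p^N$: one must check it can be chosen to be a morphism. Here I would invoke the argument of Fakhruddin: because $f$ is a morphism and $L^{\otimes d m}$ is very ample, after modifying the choice of the $P_i$ (adding to the ideal of $X$ suitable forms, equivalently using a larger subspace of $H^0(\p^N,\mathcal{O}(d))$ to lift), the common zero locus of $P_0,\dots,P_N$ can be made disjoint from $\iota(X)$, and then — still following Fakhruddin — one adjusts further so that it is empty, making $F$ a genuine endomorphism of $\p^N_\mathbf{k}$. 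All of this is carried out over the base field $\mathbf{k}$, which only needs to be infinite so that such generic choices of lifting forms exist (a finite field would force one to pass to an extension).

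The main obstacle is precisely the last step: producing an honest \emph{morphism} $F:\p^N\to\p^N$ rather than merely a rational map agreeing with $f$ on $X$. Extending the $P_i$ off $X$ so that they have no common zero on all of $\p^N$ is the content of \cite[Corollary~2.2]{Fakhruddin} and uses that $\mathbf{k}$ is infinite to make the requisite general-position choices; the surjectivity/projective-normality input and the identity $f^*L^{\otimes m}\simeq(L^{\otimes m})^{\otimes d}$ are comparatively formal. Since the full argument is in \cite{Fakhruddin}, I would present the proof as a short reduction to that corollary, emphasizing only that the polarized condition $f^*L\simeq L^{\otimes d}$ is exactly what makes $f$ extend to an endomorphism of an ambient projective space.
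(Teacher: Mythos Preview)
Your proposal is correct and takes the same approach as the paper: the paper does not give its own proof of this proposition at all, but simply states it as a direct citation of \cite[Corollary~2.2]{Fakhruddin}. Your sketch of Fakhruddin's argument (pass to a projectively normal embedding via a power of $L$, lift $f^*s_i$ to degree-$d$ forms, then use the infinite-field hypothesis to choose lifts with empty common zero locus) is accurate and more detailed than anything the paper provides; indeed the paper only later, in the proof of Lemma~\ref{lm:isotriviality-iterate}, briefly recalls the first half of this construction.
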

\begin{proof}
Let us explain briefly the construction of $F$, since we rely on it in the sequel. Up to replacing $L$ with $L^{\otimes e}$ we may assume $L$ is very ample. In this case, $L$ induces an embedding $\iota: X\hookrightarrow \mathbb{P}(H^0(X,L)^\vee)=\p^N$ and, one can choose a basis $(s_0,\ldots,s_N)$ of $H^0(X,L)^\vee$ with no common zeros, so that $f_j:=f^*(s_j)$ is a degree $d$ polynomial in the $s_i$'s. We may consider $(s_0,\ldots,s_N)$ as affine coordinates on $\mathbb{A}^{N+1}$, so that the pullback map $f^*$ induces an endomorphism
 $F:\mathbb{P}^N\to \mathbb{P}^N$, which may be defined by $F([s_0:\cdots:s_N])=[f_0:\cdots:f_N]$. 
\end{proof}

\begin{remark}\label{rem_replace_very_ample}
	\normalfont Note that if  $(X,f,L)$ is a polarized endomorphism of degree $d$ and $e \geq 1$, then   $(X,f,L^{\otimes{e}})$ is again  a polarized endomorphism of degree $d$. In particular, we shall often take $L$ to be very ample (which is equivalent to taking $e$ large enough) and we will explain why it does not affect the statements whenever it is needed.  
\end{remark}

In the following, we will say that an embedding $\iota:X\hookrightarrow \mathbb{P}^N_K$ is \emph{adapted} to the pair $(X,f,L)$ if it satisfies the conclusion of Proposition~\ref{prop:fak}. We will also use that one can assume the form $\omega_X$ comes from an adapted embedding.

\begin{corollary}\label{cor:fak}
Let $(X,f,L)$ be a polarized endomorphism over $\mathbb{C}$ of degree $d$. There exists a continuous function $u:X\to\mathbb{R}$ such that 
\[f^*\omega_X=d\cdot\omega_X+\mathrm{dd}^cu.\]
\end{corollary}

\begin{proof}
Let $\iota:X\hookrightarrow \mathbb{P}^N(\mathbb{C})$ and $F:\mathbb{P}^N(\mathbb{C})\to\mathbb{P}^N(\mathbb{C})$ be given by Proposition~\ref{prop:fak}. Then there is a smooth function $\phi:\mathbb{P}^N(\mathbb{C})\to\mathbb{R}$ such that
\[F^*\omega_{\mathrm{FS}}=d\cdot \omega_{\mathrm{FS}}+\mathrm{dd}^c\phi.\]
Since $\omega_X=\iota^*\omega_{\mathrm{FS}}$, we just have to define $u$ as $u:=\phi\circ\iota$.
\end{proof}

\subsection{Dynamics of polarized endomorphisms of projective varieties}\label{dynamics_polarized}
In the rest of the article, a $(p,p)$-current means a current of bidegree $(p,p)$.
Building on a long history of results in complex dynamics, especially ideas of Briend and Duval \cite{briendduval} and their adaptation to the meromorphic setting by Dinh, Nguyen, and Truong \cite{DNT}, we prove the following:

\begin{theorem}\label{prop:endopolarized}
Let $(X,f,L)$ be a polarized endomorphism over $\mathbb{C}$ of degree $d$. Let $k:=\dim(X)$. Then the sequence $(d^{-n}(f^n)^*\omega_X)$ converges in the sense of currents to a closed positive $(1,1)$-current $T_f$. Moreover,
\begin{enumerate}
\item $T_f=\omega_X+\mathrm{dd}^cg_f$ where $g_f$ is continuous on $X$ and $f^*T_f=d\cdot T_f$,
\item the repelling periodic points of $f$ are Zariski dense in $X$.
\end{enumerate}
\end{theorem}

First, we recall the definition of the dynamical degrees following \cite{RS, DinhSibonydegree} (see also \cite{Truong-dyn-degree, Bac-dyn-degree} for arbitrary characteristic). Let $X$ be an irreducible projective variety of dimension $k$, $L$ a big and nef line bundle on $X$ and $f:X\dashrightarrow X$ a dominant rational mapping. 
For any $1\leq j\leq k$, we set 
\[\deg_{j,L}(f)=\left((f^*c_1(L)^j)\cdot c_1(L)^{k-j}\right)/(c_1(L)^k).\]
Then, the $j$-th dynamical degree of $f$ is the quantity
\[\lambda_j(f):=\lim_{n\to\infty}\left(\deg_{j,L}(f^n)\right)^{1/n}.\]
This limit is well-defined, the quantities $\lambda_j(f)$ are birational invariants and do not depend on the choice of the big and nef line bundle (see~\cite{DinhSibonydegree, Truong-dyn-degree, Bac-dyn-degree}).
In particular, $\lambda_k(f)$ is the topological degree of $f$; we say that $f$ has \emph{large topological degree} if $\lambda_j(f)<\lambda_k(f)$ for all $1\leq j<k$.
 When $(X,f,L)$ is polarized of degree $d$, then for any $1\leq j\leq k$ and any $n\geq1$, we have
\[\deg_{j,L}(f^n)=\left(((f^n)^*c_1(L))^j\cdot c_1(L)^{k-j}\right)/(c_1(L)^k)=\left((d^n c_1(L))^j\cdot c_1(L)^{k-j}\right)/(c_1(L)^k)=d^{jn},\]
since $f^n$ is also polarized of degree $d^n$. In particular, $\lambda_j(f)=d^j$ for all $j$ and $f$ has large topological degree $d^k$.

\begin{proof}[Proof of Theorem~\ref{prop:endopolarized}]
Let $u:X\to\mathbb{R}$ be the continuous function given by Corollary~\ref{cor:fak}. By an induction, we find
  \[\frac{1}{d^n}(f^n)^*\omega_X=\omega_X+\mathrm{dd}^c\left(\sum_{j=0}^{n-1}\frac{u\circ f^j}{d^{j}}\right).\]
The sequence $u_n:=\sum_{j=0}^{n-1}\frac{u\circ f^j}{d^{j}}$ of continuous functions converges uniformly on $X$ to a continuous function $g_f$. By construction,  $T_f:=\omega_X+\mathrm{dd}^cg_f$ is the limit of $d^{-n}(f^n)^*\omega_X$ and $f^*T_f=d\cdot T_f$.

To prove point $2$, we let $p:\tilde{X}\to X$ be a resolution of the  singularities of $X$. Then $\tilde{X}$ is a smooth projective variety of dimension $k$ and the morphism $f\circ p:\tilde{X}\to X$ lifts as a rational mapping $\tilde{f}:\tilde{X}\dashrightarrow\tilde{X}$.
Let $\tilde{L}:=p^*(L)$. As $p$ is generically finite and $L$ is ample, $\tilde{L}$ is big and nef, and $\tilde{f}^*\tilde{L}\simeq\tilde{L}^{\otimes d}$ by construction. The mapping $\tilde{f}$ is dominant with large topological degree, since its dynamical degrees are the same as the dynamical degrees of $f$. In particular, by \cite[Theorem 1.1]{DNT} (see also \cite{Gu1, briendduval}), repelling periodic points of $\tilde{f}$ equidistribute towards a probability measure which does not give mass to pluripolar sets. Whence, those contained in $X_\mathrm{reg}$ are Zariski dense in $X$.
\end{proof}

We will also use the next lemma in the sequel, which is proved in \cite[Lemma~2.1]{Nakayama-Zhang}.

\begin{lemma}\label{lm:finite}
Let $(X,f,L)$ be a polarized endomorphism over $\mathbb{C}$ of degree $d$. 
Let $V$ be an invariant subvariety of dimension $\ell$, i.e.\ $f(V)=V$ then $(V, f|_{V}, L|_{V})$ is a polarized endomorphism of degree $d$. 
In particular, $f|_{V}$ is a finite endomorphism of topological degree $d^\ell$, hence $D=d^\ell$.

Consequently, for any $n\geq1$, the set $\{z\in X\, : \ f^n(z)=z\}$ is finite.
\end{lemma}

\begin{proof}
Take $V$ satisfying the hypothesis of the lemma. As $L$ is ample, so is $L|_{V}$, and $(V, f|_{V}, L|_{V})$ is a polarized endomorphism, since by functoriality of pull-back $(f|_V)^* L|_V$ is isomorphic to $(f^*L)|_V$ which is isomorphic to $L^{\otimes d}|_V$. The topological degree $D$ of $f|_{V}$ satisfies 
\[   \left(f|_{V}^*c_1( L|_{V})\right)^{\ell}= D \cdot c_1( L|_{V})^{\ell};\]
since $f|_{V}^*(c_1(L|_{V}))= d c_1(L|_{V})$ we deduce that  $f|_{V}$ is of topological degree $d^\ell$. And $f_{|V}$ is finite by the arguments of Fujimoto \cite[Lemma 2.3]{Fujimoto} (the proof is given in the smooth case, but adapts to the normal case).

Fix $n\geq1$ and assume the subvariety $Y:=\{z\in X\, : \ f^n(z)=z\}$ of $X$ has an irreducible component $W$ of dimension $\ell\geq 1$. As $f^n_{|W}$ is the identity, it has degree $1$ which contradicts the above.
\end{proof}

\subsection{The fibered Green current of a family of endomorphisms}\label{sec:fibered}

We extend here the definitions of \cite{favredujardin} to the case of families of endomorphisms of complex projective varieties. We are interested in local parametric aspects of such families, so we do not rule out transcendental dependence on the parameter, e.g.\ this section applies to the family $f_\lambda(z)=z^2+\exp(\lambda)$, $\lambda\in \mathbb{C}$.

\medskip

Fix a reduced complex analytic space $\mathcal{X}_\Lambda$, a complex Kähler manifold $\Lambda$ and assume there is a proper surjective morphism $\pi:\mathcal{X}_\Lambda\to \Lambda$ which is  an analytic submersion. Assume furthermore, there is a relatively ample line bundle $\mathcal{L}$ on $\mathcal{X}_\Lambda$, i.e.\ $\mathcal{L}$ is a line bundle on $\mathcal{X}_\Lambda$ such that $L_\lambda:=\mathcal{L}|_{X_\lambda}$ is an ample divisor on $X_\lambda:=\pi^{-1}\{\lambda\}$ for all $\lambda$ (see, e.g., \cite[\S 1.7]{Lazarsfeld-positivity}). In particular,  $X_\lambda$ is a complex projective variety of dimension $k$ for all $\lambda\in\Lambda$ which we assume to be irreducible.

 Such a morphism $\pi$ is a \emph{family of irreducible complex projective varieties of dimension $k\geq1$}.

\medskip

\begin{definition}\label{embedding_analytic}
A \emph{family of endomorphisms} of $\mathbb{P}^N$ of degree $d>1$ is a holomorphic map $F:(z,\lambda)\in\mathbb{P}^N\times\Lambda\mapsto(F_\lambda(z),\lambda)\in\mathbb{P}^N\times\Lambda$ such that $F_\lambda$ is an endomorphism of degree $d$ for any $\lambda\in \Lambda$.

A triple $(\mathcal{X}_\Lambda,\mathcal{f},\mathcal{L})$ is a complex (or holomorphic) \emph{family of polarized endomorphisms}, if $\mathcal{f}:\mathcal{X}_\Lambda\to\mathcal{X}_\Lambda$ is analytic and there is an embedding
$\widehat{\iota}:\mathcal{X}_\Lambda\hookrightarrow \mathbb{P}^N\times\Lambda$ and a family $F:\mathbb{P}^N\times\Lambda\to\mathbb{P}^N\times\Lambda$ of endomorphisms such that
\begin{center}
$F\circ \widehat{\iota}=\widehat{\iota}\circ f$ and $\mathcal{L}^{\otimes e}=\widehat{\iota}^*\mathcal{O}_{\mathbb{P}^N}(1)$ for some $e\geq 1$,
\end{center}
and such that $p_2\circ\widehat{\iota}=\pi$, where $p_2:\mathbb{P}^N\times\Lambda\to\Lambda$ is the projection onto the second factor. Then, each  $f_\lambda:=\mathcal{f}|_{X_\lambda}:X_\lambda\to X_\lambda$ is an endomorphism of $X_\lambda$ polarized by $\mathcal{L}|_{X_\lambda}$.
\end{definition}
We let $\omega$ be a K\"ahler form on $\mathbb{P}^N$, $\tilde \omega $ be the induced $(1,1)$-form on $\mathbb{P}^N\times\Lambda$ and we let $\widehat{\omega}:=\widehat{\iota}^*(\tilde\omega)$. Note that $\widehat{\omega}$ is a $(1,1)$-form on $\mathcal{X}_\Lambda$ which is cohomologous to a multiple of $c_1(\mathcal{L})$ and that $\omega_\lambda:=\widehat{\omega}|_{X_\lambda}$ is a K\"ahler form on $X_\lambda$ which comes from an adapted embedding.
\begin{proposition}
Let $(\mathcal{X}_\Lambda,\mathcal{f},\mathcal{L})$ be a complex family of polarized endomorphisms. Then there exists a function $g$, which is the restriction to $\mathcal{X}_\Lambda$ of a continuous qpsh function on $\p^N \times \Lambda$ such that the sequence $d^{-n}(\mathcal{f}^n)^*(\widehat{\omega})$ converges in the sense of currents towards a closed positive $(1,1)$-current $\widehat{T}_\mathcal{f}= \widehat{\omega}+\mathrm{dd}^c g$. Moreover, 
\begin{enumerate}
 \item $\mathcal{f}^*\widehat{T}_\mathcal{f}=d\cdot\widehat{T}_\mathcal{f}$,
 \item for all $\lambda\in\Lambda$, $T_\lambda:=\omega_\lambda+\mathrm{dd}^c(g|_{X_\lambda})$ is a well-defined positive closed $(1,1)$ current on $X_\lambda$, $(f_\lambda)^*T_\lambda=d\cdot T_\lambda$ and $T_\lambda= T_{f_\lambda}$.
 \end{enumerate} 
 \end{proposition} 
 
 \begin{proof} Let $\phi$ be a smooth quasi-potential of $F^*\tilde{\omega}$ (i.e. $F^*\tilde{\omega}=d\cdot \tilde{\omega}+\mathrm{dd}^c\phi$). Since $F\circ \widehat{\iota}=\widehat{\iota}\circ \mathcal{f}$, we have 
\[\mathcal{f}^*\widehat{\omega}=\widehat{\iota}^*(F^*\tilde{\omega})=\widehat{\iota}^*(d\cdot \tilde{\omega}+\mathrm{dd}^c\phi)=d\cdot\widehat{\omega}+\mathrm{dd}^c(\phi\circ\widehat{\iota}).\]
The function $u:=\phi\circ\widehat{\iota}:\mathcal{X}_\Lambda\to\mathbb{R}$ is continuous and satisfies $\mathcal{f}^*\widehat{\omega}=d\cdot \widehat{\omega}+\mathrm{dd}^cu$.
An easy induction then gives
 \[\frac{1}{d^n}(\mathcal{f}^n)^*\widehat{\omega}=\widehat{\omega}+\mathrm{dd}^c\left(\sum_{j=0}^{n-1}\frac{u\circ \mathcal{f}^j}{d^{j}}\right),\]
 and $d^{-n}(\mathcal{f}^n)^*\widehat{\omega}$ converges towards the closed positive $(1,1)$-current 
 \[\widehat{T}_\mathcal{f}:=\widehat{\omega}+\mathrm{dd}^c g\]
 with $g:=\sum_jd^{-j}u\circ \mathcal{f}^j$ and the convergence is locally uniform since $u$ is locally bounded.
 By construction, the function $g$ is the restriction to $\mathcal{X}_\Lambda$ of a continuous qpsh function on $\p^N \times \Lambda$. We have $\mathcal{f}^*\widehat{T}_\mathcal{f}=d\cdot \widehat{T}_\mathcal{f}$, and $\widehat{T}_\mathcal{f}|_{X_\lambda}:=\omega_\lambda+\mathrm{dd}^c(g|_{X_\lambda}) = T_{f_\lambda}$ since  $\omega_\lambda$ is a K\"ahler form on $X_\lambda$ which comes from an adapted embedding.  We thus have proved the proposition.
 \end{proof}
 
 \begin{definition}
The current $\widehat{T}_\mathcal{f}$ is the \emph{fibered Green current} of the family $(\mathcal{X}_\Lambda,\mathcal{f},\mathcal{L})$.
\end{definition}

\subsection{The bifurcation current of a closed positive current}\label{sec:bifcur}

Using the notations of the previous subsection, we now define a notion of stability and a bifurcation current for a closed positive current on $\mathcal{X}_\Lambda$.
Recall that $k$ is the relative dimension of $\mathcal{X}_\Lambda\to \Lambda$ and let $0\leq p \leq k$ be an integer.

\begin{definition}\label{def:dynamical_pair}
A $p$-\emph{measurable dynamical pair} $((\mathcal{X}_\Lambda,\mathcal{f},\mathcal{L}),S)$ on $\mathcal{X}_\Lambda$ parametrized by a complex manifold $\Lambda$ is the datum of a holomorphic family $(\mathcal{X}_\Lambda,\mathcal{f},\mathcal{L})$ as above and a positive closed $(p,p)$-current $S$ on $\mathcal{X}_\Lambda$.
\end{definition}

The \emph{bifurcation current} of a $p$-measurable dynamical pair $((\mathcal{X}_\Lambda,\mathcal{f},\mathcal{L}),S)$ is the positive closed $(1,1)$-current on $\Lambda$ defined as
\begin{equation}\label{def_bifurcation_current_pair}
T_{\mathcal{f},S}:=\pi_*\left(\widehat{T}_\mathcal{f}^{k+1-p}\wedge S\right).\end{equation}
Since $\widehat{T}_\mathcal{f}$ has locally bounded potentials, the above product is well-defined (see~\cite[Chapter III]{Demailly_SMF}). The idea to study bifurcations using currents goes back to the work of DeMarco~\cite{Demarco1} and has been intensively used since then, see e.g.\  \cite{bsurvey} and references therein.

We also endow $\Lambda$ with a K\"ahler form $\omega_\Lambda$ (with a possibly  infinite volume), and let $\check{\omega}_\Lambda:=(\pi)^{*}\omega_\Lambda$. In this case, $\check{\omega}_\Lambda+\widehat{\omega}$ is a K\"ahler form on $(\mathcal{X}_\Lambda)_{\mathrm{reg}}$ and we can define the \emph{mass} of a closed positive $(p,p)$-current $S$ on a compact subset $\mathcal{K}$ of $\mathcal{X}_\Lambda$ as
\[\|S\|_{\mathcal{K}}:=\int_{\mathcal{K}} S\wedge\left(\widehat{\omega}+\check{\omega}_\Lambda\right)^{k+\dim \Lambda-p}<+\infty.\]

\begin{definition}\label{def_stable}
We say that the  $p$-measurable dynamical pair $((\mathcal{X}_\Lambda,\mathcal{f},\mathcal{L}),S)$ is \emph{stable} if the sequence $\{(\mathcal{f}^n)_*(S)\}_{n\geq1}$ of closed positive $(p,p)$-currents satisfies \[\|(\mathcal{f}^{n})_{*}(S)\wedge\check{\omega}_\Lambda^{\dim\Lambda-1}\|_\mathcal{K}=o(d^{n(k+1-p)}) \ \mathrm{as} \ n\to \infty\]
 in any compact subset $\mathcal{K}$ of $\mathcal{X}_\Lambda$.
\end{definition}

\begin{proposition}\label{lm:bifmeasure}
Let $(\mathcal{X}_\Lambda,\mathcal{f},\mathcal{L})$ be any holomorphic family of polarized endomorphisms as above and let $S$ be any positive closed $(p,p)$-current of $\mathcal{X}_\Lambda$. 
Then, the following assertions are equivalent:
\begin{enumerate}
\item the pair $((\mathcal{X}_\Lambda,\mathcal{f},\mathcal{L}),S)$ is stable,
\item for any compact set $\mathcal{K}\subset \mathcal{X}_\Lambda$, we have $\|(\mathcal{f}^{n})_{*}(S)\wedge\check{\omega}_\Lambda^{\dim\Lambda-1}\|_\mathcal{K}=O(d^{n(k-p)})$,
\item as a current on $\Lambda$, we have $T_{\mathcal{f},S}=0$.
\end{enumerate}
\end{proposition}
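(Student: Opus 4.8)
The plan is to prove the cycle of implications $(2)\Rightarrow(1)\Rightarrow(3)\Rightarrow(2)$, exploiting the explicit potential-theoretic description of $\widehat{T}_\mathcal{f}$ coming from the preceding proposition. First note $(2)\Rightarrow(1)$ is immediate, since $O(d^{n(k-p)})=o(d^{n(k+1-p)})$. So the two genuine points are $(1)\Rightarrow(3)$ and $(3)\Rightarrow(2)$.

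For $(1)\Rightarrow(3)$, I would argue by contraposition, or more precisely compute $T_{\mathcal{f},S}$ directly and show it controls the growth of masses. Write $\widehat{T}_\mathcal{f}=\widehat{\omega}+dd^c g$ with $g$ continuous on $\mathcal{X}_\Lambda$, and recall $d^{-n}(\mathcal{f}^n)^*\widehat{\omega}\to\widehat{T}_\mathcal{f}$. For a test form on $\Lambda$, one pairs $T_{\mathcal{f},S}=\pi_*(\widehat{T}_\mathcal{f}^{k+1-p}\wedge S)$ against it; using invariance $\mathcal{f}^*\widehat{T}_\mathcal{f}=d\cdot\widehat{T}_\mathcal{f}$ and the projection/push-pull formula, $\langle T_{\mathcal{f},S},\phi\rangle = d^{-n(k+1-p)}\langle \pi_*((\mathcal{f}^n)^*\widehat{T}_\mathcal{f}^{k+1-p}\wedge S),\phi\rangle = d^{-n(k+1-p)}\langle \pi_*(\widehat{T}_\mathcal{f}^{k+1-p}\wedge (\mathcal{f}^n)_*S),\phi\rangle$ (the relevant iterate is pushed onto $S$). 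Replacing one copy of $\widehat{T}_\mathcal{f}$ by $\widehat{\omega}+dd^cg$ and integrating by parts, with $g$ bounded on the compact support of $\phi$ and with continuous potentials available for the remaining powers of $\widehat{T}_\mathcal{f}$ (so wedge products are legitimate à la Bedford--Taylor), one bounds $|\langle T_{\mathcal{f},S},\phi\rangle|$ by a constant times $d^{-n(k+1-p)}\|(\mathcal{f}^n)_*(S)\wedge(\pi^*\omega_\Lambda)^{\dim\Lambda-1}\|_K$ for a suitable compact $K\supset\supp\phi$. Under stability this tends to $0$, hence $T_{\mathcal{f},S}=0$. This is essentially the argument of \cite[Proposition-Definition 3.1]{favredujardin} and \cite[Lemma 3.13]{BBD}, transplanted to the polarized setting, so I would follow their bookkeeping of the $dd^c$-terms.

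For $(3)\Rightarrow(2)$, the point is a telescoping/cohomological identity. Using $\widehat{T}_\mathcal{f}=\widehat{\omega}+dd^cg$, expand $\widehat{\omega}=\widehat{T}_\mathcal{f}-dd^cg$ and iterate to express, modulo $dd^c$-exact terms, the mass $\|(\mathcal{f}^n)_*(S)\wedge(\pi^*\omega_\Lambda)^{\dim\Lambda-1}\|_K=\int_K (\mathcal{f}^n)_*(S)\wedge(\widehat{\omega}+\pi^*\omega_\Lambda)^{k+1-p}\wedge(\pi^*\omega_\Lambda)^{\dim\Lambda-1}$ as $d^{n(k+1-p)}$ times an integral against $\widehat{T}_\mathcal{f}^{k+1-p}$ (which contributes a term proportional to $\langle T_{\mathcal{f},S},(\omega_\Lambda)^{\dim\Lambda}\rangle=0$ under (3)) plus lower-order terms in which at least one factor $\widehat{\omega}$ is replaced by $\widehat{T}_\mathcal{f}-dd^cg$ or by $\pi^*\omega_\Lambda$; each such term is $O(d^{n(k-p)})$ because pulling back $(\mathcal{f}^n)^*$ onto a product with strictly fewer than $k+1-p$ factors of $\widehat{\omega}$ (the rest being $\pi$-pullbacks, which are $\mathcal{f}$-fiberwise-trivial, or $dd^c$-exact pieces to be integrated by parts onto the bounded $g$) gains at least one factor $d^{-n}$. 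Carefully, one uses that on a compact $K$ the potentials $g$ are bounded and that the currents in play have continuous potentials, so all integrations by parts and intersection products are justified. Hence the mass is $d^{n(k+1-p)}\cdot o(1)+O(d^{n(k-p)})$; but in fact the $o(1)$ term is exactly $0$ under (3), giving the clean bound $O(d^{n(k-p)})$, which is (2).

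The main obstacle I expect is the careful justification of the integration-by-parts steps and the wedge products: the currents $\widehat{T}_\mathcal{f}^{j}\wedge S$ involve high powers of a current that is only known to have continuous (not smooth) potentials, $S$ may be singular, and $\mathcal{X}_\Lambda$ is a fibration over a possibly non-compact $\Lambda$ so all estimates must be localized to compacts $K$. Ensuring that the ``error'' terms genuinely carry the extra factor $d^{-n}$ — rather than merely being $O(1)$ — requires being precise about which copy of $\widehat{\omega}$ absorbs the pullback $(\mathcal{f}^n)^*$ and tracking that $(\mathcal{f}^n)^*\pi^*\omega_\Lambda=\pi^*\omega_\Lambda$ (since $\mathcal{f}$ commutes with $\pi$), so that pullbacks only act nontrivially on the $\widehat{\omega}$-factors. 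Once this bookkeeping is set up, the estimates are routine applications of Bedford--Taylor theory and the continuity of $g$.
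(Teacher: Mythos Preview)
Your approach is correct and essentially the same as the paper's, just organized differently. The paper packages both directions into a single two-sided quantitative estimate (Lemma~\ref{lm:growthmass}): for nested compacts $K_1\Subset K_2\Subset\Lambda$,
\[
\left|\,\|(\mathcal{f}^{n})_*(S)\wedge(\pi^*\omega_\Lambda)^{\dim\Lambda-1}\|_{\pi^{-1}(K_1)}-d^{(k+1-p)n}\int_{\pi^{-1}(K_1)}\widehat{T}_\mathcal{f}^{k+1-p}\wedge S\wedge(\pi^*\omega_\Lambda)^{\dim\Lambda-1}\right|\le C\,d^{(k-p)n}\|S_0\|_{\pi^{-1}(K_2)},
\]
obtained exactly by your expansion $(\mathcal{f}^n)^*\widehat{\omega}=d^n\widehat{T}_\mathcal{f}-dd^c(g\circ\mathcal{f}^n)$, a binomial expansion, and the Chern--Levine--Nirenberg inequality (using that $\|g\circ\mathcal{f}^n\|_{L^\infty(\pi^{-1}(K_2))}=\|g\|_{L^\infty(\pi^{-1}(K_2))}$ since $\pi\circ\mathcal{f}=\pi$). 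All three implications then fall out in one line. Your separate arguments for $(1)\Rightarrow(3)$ and $(3)\Rightarrow(2)$ are the two halves of this same inequality; the paper's packaging is tidier but there is no real difference in content.

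Two minor remarks. First, in your $(3)\Rightarrow(2)$ sketch you write $\langle T_{\mathcal{f},S},(\omega_\Lambda)^{\dim\Lambda}\rangle$; for bidegree reasons this should be $\omega_\Lambda^{\dim\Lambda-1}$, and one then uses that a closed positive $(1,1)$-current with vanishing trace measure is zero. Second, the reason the ``lower-order'' terms are genuinely $O(d^{n(k-p)})$ (and not merely $O(d^{n(k+1-p)})$) is precisely that $\sup_n\|g\circ\mathcal{f}^n\|_{L^\infty(\pi^{-1}(K_2))}<\infty$, so each $dd^c(g\circ\mathcal{f}^n)$ factor contributes $O(1)$ rather than $O(d^n)$ in the CLN estimate; you allude to this but it is the crux of the bookkeeping.
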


This proposition is a direct consequence of the next lemma which follows the proof of \cite[Proposition-Definition 3.1]{favredujardin} and of \cite[Lemma 3.13]{BBD}.

\begin{lemma}\label{lm:growthmass}
	Let $S$ be a closed positive $(p,p)$-current on $\mathcal{X}_\Lambda$. For any compact sets $K_1\Subset K_2\Subset\Lambda$, there exists a constant $C(K_1,K_2)>0$ depending only on $K_1$, $K_2$ and on $(\mathcal{X}_\Lambda,\mathcal{f},\mathcal{L})$, such that for any $n\geq0$, the quantity
\[\left|\|({\mathcal{f}}^{n})_*(S)\wedge\check{\omega}_\Lambda^{\dim\Lambda-1}\|_{\pi^{-1}(K_1)}-d^{(k+1-p)n}\int_{\pi^{-1}(K_1)}\widehat{T}_\mathcal{f}^{k+1-p}\wedge S\wedge\check{\omega}_\Lambda^{\dim\Lambda-1}\right|\]
is bounded above by $C(K_1,K_2)\cdot d^{(k-p)n}\|S\wedge\check{\omega}_\Lambda^{\dim\Lambda-1}\|_{\pi^{-1}(K_2)}$.
\end{lemma}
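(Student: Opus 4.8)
\textbf{Proof strategy for Lemma~\ref{lm:growthmass}.}

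The plan is to write the difference as a telescoping sum of ``one-step'' differences and to control each term using the fact that both $\widehat{\omega}$ and $\widehat{T}_\mathcal{f}$ have continuous potentials whose difference is a globally defined bounded function on $\mathcal{X}_\Lambda$. Concretely, set $\psi := g$ (the potential with $\widehat{T}_\mathcal{f} = \widehat{\omega} + dd^c g$), so that $\widehat{\omega} = \widehat{T}_\mathcal{f} - dd^c\psi$ and $\|\psi\|_{L^\infty(\mathcal{X}_\Lambda)} < \infty$. The quantity $\|(\mathcal{f}^n)_*(S)\wedge(\pi^*\omega_\Lambda)^{\dim\Lambda-1}\|_{\pi^{-1}(K_1)}$ is by definition $\int_{\pi^{-1}(K_1)} (\mathcal{f}^n)_*(S)\wedge(\pi^*\omega_\Lambda)^{\dim\Lambda-1}\wedge(\widehat{\omega}+\pi^*\omega_\Lambda)^{k+1-p}$; expanding the power of the Kähler form by the binomial formula and using $(\pi^*\omega_\Lambda)^{\dim\Lambda} = 0$ on $\mathcal{X}_\Lambda$ (since $\pi$ has fibers of dimension $k$ and the base has dimension $\dim\Lambda$, actually more carefully since $\omega_\Lambda$ is pulled back from $\Lambda$ and already appears to the power $\dim\Lambda-1$, the term with $(\pi^*\omega_\Lambda)^{\dim\Lambda}$ vanishes), only the terms involving at least one factor $\widehat{\omega}$ survive beyond the top pull-back power. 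This lets me reduce to estimating, for each $0\le j\le k+1-p$, integrals of the form $\int (\mathcal{f}^n)_*(S)\wedge \widehat{\omega}^{\,j}\wedge(\pi^*\omega_\Lambda)^{k+1-p-j+\dim\Lambda-1}$, and the main term to match is the one with $j = k+1-p$, which I will replace by $\widehat{T}_\mathcal{f}^{k+1-p}$.

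The key computation is then: by the projection formula (adjunction of $\mathcal{f}^*$ and $(\mathcal{f}^n)_*$, valid since $\widehat{\omega}$ and $\psi$ are continuous and $S$ has continuous potentials),
\[
\int_{\pi^{-1}(K_1)} (\mathcal{f}^n)_*(S)\wedge\widehat{\omega}^{\,k+1-p}\wedge(\pi^*\omega_\Lambda)^{\dim\Lambda-1}
= \int_{\mathcal{f}^{-n}(\pi^{-1}(K_1))} S\wedge (\mathcal{f}^n)^*\widehat{\omega}^{\,k+1-p}\wedge(\pi^*\omega_\Lambda)^{\dim\Lambda-1},
\]
using that $\pi\circ\mathcal{f} = \pi$ so $\pi^*\omega_\Lambda$ is $\mathcal{f}$-invariant and $\mathcal{f}^{-n}(\pi^{-1}(K_1)) = \pi^{-1}(K_1)$. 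Now $(\mathcal{f}^n)^*\widehat{\omega} = d^n(\widehat{\omega} + dd^c u_n)$ where $u_n = \sum_{j=0}^{n-1} d^{-j} u\circ\mathcal{f}^j \to g$ uniformly, hence $d^{-n}(\mathcal{f}^n)^*\widehat{\omega} = \widehat{T}_\mathcal{f} + dd^c(u_n - g)$ with $\|u_n-g\|_\infty \le C d^{-n}$. Expanding $(d^{-n}(\mathcal{f}^n)^*\widehat{\omega})^{k+1-p} = (\widehat{T}_\mathcal{f} + dd^c(u_n-g))^{k+1-p}$ and integrating against $S\wedge(\pi^*\omega_\Lambda)^{\dim\Lambda-1}$ over $\pi^{-1}(K_1)$, the leading term is exactly $\int_{\pi^{-1}(K_1)}\widehat{T}_\mathcal{f}^{k+1-p}\wedge S\wedge(\pi^*\omega_\Lambda)^{\dim\Lambda-1}$, and every error term contains at least one factor $dd^c(u_n-g)$; after one Stokes/integration-by-parts on the larger set $\pi^{-1}(K_2)$ (using a cutoff $\chi$ equal to $1$ on $K_1$, supported in $K_2$), such a term is bounded by $\|u_n-g\|_\infty \le Cd^{-n}$ times a mass of $S$ over $\pi^{-1}(K_2)$ against fixed forms, i.e.\ by $Cd^{-n}\|S\wedge(\pi^*\omega_\Lambda)^{\dim\Lambda-1}\|_{\pi^{-1}(K_2)}$. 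Multiplying the whole identity by $d^{(k+1-p)n}$ produces the claimed bound with gain $d^{(k-p)n}$. The lower-order terms in the original binomial expansion (those with $j < k+1-p$) carry a power $d^{jn}\le d^{(k-p)n}$ after the same projection-formula manipulation, and are absorbed into the error in the same way.

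The main obstacle I expect is bookkeeping the boundary terms carefully: the mass $\|\cdot\|_K$ is an integral over a compact piece $\pi^{-1}(K_1)$ of a non-compact manifold, so Stokes' theorem is not immediately available and one must genuinely introduce a smooth cutoff $\chi$ with $\mathbf{1}_{\pi^{-1}(K_1)}\le\chi\le\mathbf{1}_{\pi^{-1}(K_2)}$ and pay attention to the terms where $dd^c$ hits $\chi$ rather than the potential — these are controlled because $\chi$ can be chosen with $dd^c\chi$ bounded by a fixed current depending only on $K_1,K_2$, and they are again bounded by the mass of $(\mathcal{f}^n)_*(S)$ over $\pi^{-1}(K_2)\setminus\pi^{-1}(K_1)$, which by the same projection-formula estimate is $O(d^{(k+1-p)n})$ times the mass of $S$; after the normalization by $d^{(k+1-p)n}$ this is of the right size only because we are comparing to $\int\widehat{T}_\mathcal{f}^{k+1-p}\wedge S$ restricted to $\pi^{-1}(K_1)$ — so in fact one should be slightly more careful and cut off at an intermediate compact set $K_1\Subset K_{1/2}\Subset K_2$, which is harmless. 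Modulo this standard regularization, the proof is a direct adaptation of the one-dimensional argument of Dujardin--Favre and of Berteloot--Bianchi--Dupont cited above.
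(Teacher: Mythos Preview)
Your approach is essentially the paper's: pull back via the projection formula using $\pi\circ\mathcal{f}=\pi$, replace $(\mathcal{f}^n)^*\widehat\omega$ by $d^n\widehat T_{\mathcal f}$ plus a $dd^c$-term, expand binomially, and estimate the cross-terms. The two write-ups differ only in how that $dd^c$-term is packaged and how the cross-terms are bounded.

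The paper writes $(\mathcal f^n)^*\widehat\omega = d^n\widehat T_{\mathcal f}-dd^c(g\circ\mathcal f^n)$ and then invokes the Chern--Levine--Nirenberg inequality: since $\|g\circ\mathcal f^n\|_{L^\infty(\pi^{-1}(K_2))}\le \|g\|_{L^\infty(\pi^{-1}(K_2))}$ is bounded \emph{independently of $n$}, each cross-term $\int_{\pi^{-1}(K_1)}\widehat T_{\mathcal f}^{\,i}\wedge(dd^c(g\circ\mathcal f^n))^{k+1-p-i}\wedge S_0$ is bounded by a constant times $\|S_0\|_{\pi^{-1}(K_2)}$, and the gain $d^{(k-p)n}$ comes entirely from the prefactor $d^{ni}$ with $i\le k-p$. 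No cutoff and no Stokes are needed---CLN already encapsulates that integration-by-parts argument and handles the non-positivity of $dd^c(g\circ\mathcal f^n)$ automatically. Your parametrization $d^{-n}(\mathcal f^n)^*\widehat\omega=\widehat T_{\mathcal f}+dd^c(u_n-g)$ is literally the same identity (indeed $d^n(u_n-g)=-g\circ\mathcal f^n$), but by putting the $d^{-n}$ into the potential you are led to integrate by parts by hand and chase the cutoff terms, which is where your sketch gets tangled. Just cite CLN and the difficulty you flag in your last paragraph disappears.

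Two small corrections to your bookkeeping. First, in the binomial expansion of $(\widehat\omega+\pi^*\omega_\Lambda)^{k+1-p}$ wedged with $(\pi^*\omega_\Lambda)^{\dim\Lambda-1}$, the vanishing is $(\pi^*\omega_\Lambda)^{\dim\Lambda+1}=0$, so exactly \emph{two} terms survive (those with $\widehat\omega^{k+1-p}$ and $\widehat\omega^{k-p}$), not ``at least one factor $\widehat\omega$''; the second surviving term is the source of the $(k+1-p)$-coefficient in the paper's display. Second, in your cross-terms the remaining factors $(dd^c(u_n-g))^{b-1}$ are not positive currents, so to bound their mass you would anyway have to rewrite them as differences $d^{-n}(\mathcal f^n)^*\widehat\omega-\widehat T_{\mathcal f}$ of positive currents---again, this is exactly what CLN does for you in one stroke.
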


\begin{proof}
Let $K_1\Subset K_2\Subset\Lambda$ be any compact subsets and write
$S_n:=(\mathcal{f}^{n})_*(S)\wedge\check{\omega}_\Lambda^{\dim\Lambda-1}$,
	for any integer $n\geq0$. Since $(\pi\circ \mathcal{f}^{n})^{*}\omega_\Lambda=\pi^*\omega_\Lambda=\check{\omega}_\Lambda$, we have
	\begin{align*}
	\|S_n\|_{\pi^{-1}(K_1)} & = \int_{\pi^{-1}(K_1)} S_0\wedge\left(\check{\omega}_\Lambda+ (\mathcal{f}^{n})^{*}\widehat{\omega}\right)^{k+1-p}.
	\end{align*} 
	Now we use that $S_0=S\wedge \check{\omega}_\Lambda^{\dim\Lambda-1}$, $\omega_\Lambda^{\dim\Lambda+1}=0$ and $d^{n}\widehat{T}_\mathcal{f}=(\mathcal{f}^{n})^{*}\widehat{\omega}+dd^{c}(g\circ f^{n})$, where $g$ is a continuous $\widehat{\omega}$-psh function on $\mathcal{X}_\Lambda$. So
	\begin{align}\label{eq:lemme7}
	\|S_n\|_{\pi^{-1}(K_1)} & =\int_{\pi^{-1}(K_1)} S\wedge \check{\omega}_\Lambda^{\dim\Lambda-1}\wedge \left( d^{n}\widehat{T}_\mathcal{f}-dd^{c}(g\circ \mathcal{f}^{n})\right)^{k+1-p} \nonumber\\ 
	&\ +  (k+1-p)\int_{\pi^{-1}(K_1)} S\wedge \check{\omega}_\Lambda^{\dim\Lambda}\wedge \left( d^{n}\widehat{T}_\mathcal{f}-dd^{c}(g\circ \mathcal{f}^{n}) \right)^{k-p}.
	\end{align}
	Now, 
	\begin{align*} \left( d^{n}\widehat{T}_\mathcal{f}-dd^{c}(g\circ \mathcal{f}^{n})\right)^{k+1-p}&=\sum_{i=0}^{k+1-p} \binom{k+1-p}{i} (-1)^{k+1-p-i}d^{ni}\widehat{T}^i_\mathcal{f}\wedge \left(dd^{c}(g\circ \mathcal{f}^{n})\right)^{k+1-p-i}, \\
	\left( d^{n}\widehat{T}_\mathcal{f}-dd^{c}(g\circ \mathcal{f}^{n})\right)^{k-p}&=\sum_{j=0}^{k-p} \binom{k-p}{i} (-1)^{k-p-i}d^{ni}\widehat{T}^i_\mathcal{f}\wedge \left(dd^{c}(g\circ \mathcal{f}^{n})\right)^{k-p-i}.
	\end{align*}
	Take $i\in \{0,\dots,k-p\}$. 
	By the Chern-Levine-Nirenberg inequality, see \cite[Th\'eor\`eme~2.2]{Demailly_SMF} and \cite[\S 3.3 p. 146]{Demailly}, there exists a constant $C_1>0$ depending only on $K_1$, $K_2$, $\|\widehat{T}_\mathcal{f}\|_{\pi^{-1}(K_2)}$ and $\|g\|_{L^{\infty}(\pi^{-1}(K_2))}$ such that
	\begin{align*}\left|\int_{\pi^{-1}(K_1)} S\wedge \check{\omega}_\Lambda^{\dim\Lambda-1}\wedge  \widehat{T}^i_\mathcal{f}\wedge \left(dd^{c}(g\circ \mathcal{f}^{n})\right)^{k+1-p-i} \right| \leq C_1 \|S_0\|_{\pi^{-1}(K_2)},\\
	\left|\int_{\pi^{-1}(K_1)} S\wedge \check{\omega}_\Lambda^{\dim\Lambda}\wedge  \widehat{T}^i_\mathcal{f}\wedge \left(dd^{c}(g\circ \mathcal{f}^{n})\right)^{k-p-i} \right| \leq C_1 \|S_0\|_{\pi^{-1}(K_2)}.\end{align*}
	Now, subtracting $d^{(k+1-p)n}\int_{\pi^{-1}(K_1)}\widehat{T}_\mathcal{f}^{k+1-p}\wedge S\wedge\check{\omega}_\Lambda^{\dim\Lambda-1}$ from both sides of \eqref{eq:lemme7}, we have, up to enlarging $C_1$: 
		\begin{align*}
	\left|\|S_n\|_{\pi^{-1}(K_1)} - d^{(k+1-p)n}\int_{\pi^{-1}(K_1)}\widehat{T}_\mathcal{f}^{k+1-p}\wedge S\wedge\check{\omega}_\Lambda^{\dim\Lambda-1}\right| & \leq C_1\|S_0\|_{\pi^{-1}(K_2)}d^{(k-p)n}
	\end{align*}
	which ends the proof.
\end{proof}

\begin{proof}[Proof of Proposition~\ref{lm:bifmeasure}]
Let $\mathcal{K}$ be any compact subset of $\mathcal{X}_\Lambda$. Remark that it is always contained in a compact subset of the form $\pi^{-1}(K)$, where $K$ is compact in $\Lambda$, since the fibers of $\pi$ are compact. By Lemma~\ref{lm:growthmass}, we have $\|(\mathcal{f}^{n})_*(S)\wedge\check{\omega}_\Lambda^{\dim\Lambda-1}\|_{\pi^{-1}(K)}=O(d^{n(k-p)})$, if and only if $T_{f,S}\wedge\omega_\Lambda^{\dim\Lambda-1}$ is zero on $K$.

As this holds for any compact set $K\Subset \Lambda$, this proves the equivalence between points 2 and 3, and point 2 obviously implies point 1. The proof that point 1 implies point 3 follows also from Lemma~\ref{lm:growthmass}.  Indeed, if $T_{\mathcal{f},S}\neq 0$, then for a suitable compact set $K_1$, $\int_{\pi^{-1}(K_1)}\widehat{T}_\mathcal{f}^{k+1-p}\wedge S\wedge\check{\omega}_\Lambda^{\dim\Lambda-1}:=2C\neq 0$ and  $\|(\mathcal{f}^{n})_*(S)\wedge\check{\omega}_\Lambda^{\dim\Lambda-1}\|_{\pi^{-1}(K_1)}\geq C (d^{n(k+1- p)})$ for $n$ large enough.
\end{proof}

\begin{remark}\normalfont
We say that a subvariety $\mathcal{C}\subset\mathcal{X}_\Lambda$ of pure dimension $\dim(\Lambda)$ is horizontal if $\mathcal{C}\cap \pi^{-1}\{\lambda\}$ is finite for all $\lambda\in \Lambda$.

 When $S=[\mathcal{C}]$ is the current of integration on a horizontal variety of pure dimension $\dim(\Lambda)$, the definition of stability implies that, for any compact set $K$, $\|[f^n(\mathcal{C})]\|_K$ is bounded (Apply Proposition~\ref{lm:bifmeasure} (2) with $p=k$).
By a famous Theorem of Bishop (see~\cite[Corollary~p.205]{Chirka}), there exists a subsequence $(f^{n_k}(\mathcal{C}))_k$ which converges in Hausdorff topology towards an analytic set $\mathcal{C}_\infty$. In particular, if $\dim \Lambda=1$ and $\mathcal{C}$ is the graph of a holomorphic section $\sigma:\Lambda\to \mathcal{X}_\Lambda$ of $\pi$, this is equivalent to the local uniform convergence of the sequence $\sigma_k:=\mathcal{f}^{n_k}\circ \sigma$ of sections of $\pi$ to a holomorphic section. Indeed, if $\mathcal{C}_\infty$ had a vertical component $V$ over $t_0$, then we have $\widehat{T}_f\wedge [V]=T_{f_{t_0}}\wedge[V]\neq0$.
In other words, $(\mathcal{f}^{n}\circ \sigma)_n$ is a normal family. 
\end{remark}

\section{Algebraic families of polarized endomorphisms}\label{section3}
Let $\mathcal{B}$ be a normal complex projective variety and let $\mathbf{K}:=\mathbb{C}(\mathcal{B})$ be its field of rational functions.  Let $(X,f,L)$ be a polarized endomorphism over $\mathbf{K}$ with $\dim X =k$.
The purpose of this section is to relate bifurcation currents to height with explicit error terms and use this to prove Theorem~\ref{tm:formulaheight}.

\subsection{Polarized endomorphisms over a function field versus algebraic families}\label{sec:algebraic}

By Remark~\ref{rem_replace_very_ample}, we assume that  $(X,f,L)$ is a polarized endomorphism with $L$ very ample.  
By Proposition~\ref{prop:fak} applied to $(X,f,L)$, we have an embedding $i:X\hookrightarrow \p^N_\mathbf{K}$ with $L=i^*\mathcal{O}_{\p^N}(1)$ and an endomorphism $F:\p^N_\mathbf{K}\to\p^N_\mathbf{K}$ such that $i\circ F=f\circ i$. This endomorphism $F$ gives rise to a family $(\mathbb{P}^N_\mathbb{C}\times\mathcal{B},\mathcal{F},\mathcal{O}_{\p^N}(1))$ of endomorphisms of $\p^N_\C$ parametrized by $\mathcal{B}$.

Let $\mathcal{X}$ be the Zariski closure in $\p^N_\C\times \mathcal{B}$ of the image of $i(X)$ by the isomorphism between $\p^N_\mathbf{K}$ and the generic fiber of $\p^N_\mathbb{C}\times\mathcal{B}\to\mathcal{B}$. Let $\mathcal{L}$ be the restriction of $\mathcal{O}_{\p^N}(1)$ restricted to $\mathcal{X}$. Let also $\iota:\mathcal{X}\hookrightarrow \p^N_\C\times \mathcal{B}$ be the inclusion and $\mathcal{f}:=\mathcal{F}|_\mathcal{X}$. Then $(\mathcal{X},\mathcal{f},\mathcal{L})$ is a model for $(X,f,L)$.

The models $(\mathbb{P}^N_\mathbb{C}\times\mathcal{B},\mathcal{F},\mathcal{O}_{\p^N}(1))$ and $(\mathcal{X},\mathcal{f},\mathcal{L})$ also induce analytic families of polarized endomorphisms over a common regular part $\Lambda\subset\mathcal{B}$.

Conversely, an algebraic family of polarized endomorphisms gives rise to a polarized endomorphism $(X,f,L)$, with $L$ very ample. Indeed, let $X$ be the generic fiber of $\mathcal{X}$, and let $L$ and $f$ be the respective restrictions of $\mathcal{L}$ and $\mathcal{f}$ to $X$. Then $(X,f,L)$ is a polarized endomorphism over $\mathbf{K}$. Note that in what follow, when we speak of an algebraic family of polarized endomorphisms, we always mean that  the corresponding line bundle $L$ is very ample.

 We now illustrate the above notions in the \emph{Desboves family} $\mathcal{f}$; we will also explore it in Example~\ref{Desboves2}.  
	 We start with the action of $\mathcal{f}$ on several invariant sets and we explain why the family is not isotrivial (see Definition~\ref{def_isotrivial} in the introduction).

\begin{example}[The elementary Desboves family I]\normalfont \label{Desboves1}

This family is already used by e.g.\ \cite{Bonifant-Dabija,Bonifant-Dabija-Milnor} to product attractors and in \cite{Bianchi-Taflin} to construct an open set of bifurcation. For any $\lambda\in\C^*$, let $f_\lambda:\mathbb{P}^2\to\mathbb{P}^2$ be the endomorphism  of degree $4$ given by
\[f_\lambda\left([x:y:z]\right):=[-x(x^3+2z^3):y(z^3-x^3+\lambda(x^3+y^3+z^3)):z(2x^3+z^3)].\]
This defines an algebraic family $(\mathbb{P}^2\times\mathbb{P}^1,\mathcal{f},\mathcal{O}_{\mathbb{P}^2}(1))$ with regular part $\mathbb{C}^*\subset\mathbb{P}^1$.
This family induces an endomorphism of degree $4$ of $\mathbb{P}^2_{\mathbb{C}(z)}$.

 For any $\lambda\in\mathbb{C}^*$, the point $\rho_0:=[0:1:0]$ is totally invariant by $f_\lambda$, i.e.\  $f_\lambda^{-1}\{\rho_0\}=\{\rho_0\}$ and $f_\lambda$ preserves the pencil $\mathcal{P}$ of lines of $\mathbb{P}^2$ passing through $\rho_0$. Furthermore, $f_\lambda$ preserves the lines $H_x=\{x=0\}$ and $H_z=\{z=0\}$ which belong to $\mathcal{P}$, the line $H_y=\{y=0\}$, and the Fermat curve
\[\mathcal{C}:=\{[x:y:z]\in\mathbb{P}^2\, : \ x^3+y^3+z^3=0\}.\]
Moreover, for any $\lambda\in\C^*$, the restriction of $f_\lambda$ to each of those curves can be described:
\begin{itemize}
\item the restriction of $f_\lambda$ to the line $H_x$ is the degree $4$ polynomial $p_\lambda(z)=\lambda z^4+(1+\lambda)z$,
\item the restriction of $f_\lambda$ to the line $H_z$ is the degree $4$ polynomial $p_{-\lambda}$,
\item the restriction of $f_\lambda$ to the line $H_y$ is the Latt\`es map
\[g:[x:y]\in\mathbb{P}^1\longmapsto [-x(x- 3+2y^3):y^3(2x^3+y^3)]\in\mathbb{P}^1.\]
this defines a constant, hence isotrivial, family, 
\item  the restriction of $f_\lambda$ to the elliptic curve $\mathcal{C}$ is the isogeny $u\mapsto -2u+\beta$ for some $\beta$ independent of $\lambda$ so we have again isotriviality.
\end{itemize}
Note that the family $(\mathbb{P}^2\times\mathbb{P}^1,\mathcal{f},\mathcal{O}_{\mathbb{P}^2}(1))$ is obviously non-isotrivial, since the restriction $p_\lambda$ of $f_\lambda$ to $X$ has a fixed point with multiplier $1+\lambda$, which is a non-constant rational function of $\lambda$.
\end{example}

\medskip

We will regularly use the following lemma, which says that isotriviality can be read on any iterate and does not depend on the chosen regular part.

\begin{lemma}\label{lm:isotriviality-iterate}
Let $(\mathcal{X},f,\mathcal{L})$ be an algebraic family of polarized endomorphisms of degree $d$. Let $\Lambda$ and $\Lambda'$ be two regular parts for $(\mathcal{X},f,\mathcal{L})$. The following are equivalent
\begin{enumerate}
\item $(\mathcal{X},\mathcal{f},\mathcal{L})$ is isotrivial over $\Lambda$,
\item $(\mathcal{X},\mathcal{f},\mathcal{L})$ is isotrivial over $\Lambda'$,
\item there exists $n\geq1$ such that  $(\mathcal{X},\mathcal{f}^n,\mathcal{L})$ is isotrivial over $\Lambda$.
\end{enumerate}
\end{lemma}

\begin{proof}
If $(\mathcal{X},\mathcal{f},\mathcal{L})$ is isotrivial over $\Lambda$, obviously $(\mathcal{X},\mathcal{f}^n,\mathcal{L})$ is isotrivial over $\Lambda$ for any $n\geq1$. We thus prove the converse implication. Let $X$ be the generic fiber of $\pi$, $L:=\mathcal{L}|_X$ and $f:=\mathcal{f}|_X$. 

Assume the family $(\mathcal{X},\mathcal{f}^n,\mathcal{L})$ is isotrivial over $\Lambda$, whence up to base change and conjugacy over $\C(\mathcal{B})$, the polarized endomorphism $(X,f^n,L)$ is defined over $\C$ whence $\mathcal{X} = X_\C \times \mathcal{B}$. Fix $t_0 \in \Lambda$ and let $\Per_q(f_{t_0}^n)$ be the finite set of periodic points of $f_{t_0}^n$ of period $q$. For any $Q$, the set $E_Q$ of endomorphisms of degree $d$ of $X_\C$ that coincide on $\bigcup_{q\leq Q}\mathrm{Per}_q(f_{t_0}^n)$ with $f_{t_0}$ is Zariski closed. As $\bigcup_q\mathrm{Per}_q(f_{t_0}^n)$ is Zariski dense by Theorem~\ref{prop:endopolarized}, the set $\bigcap_Q E_Q$ is reduced to $\{f_{t_0}\}$. By noetherianity, this implies $E_Q=\{f_{t_0}\}$ for a large enough $Q$.  Observe now that 
for all $q\leq Q$, all $x \in \Per_q(f_{t_0}^n)$ and all $t\in \Lambda$, one has $f_t(x) \in \Per_q(f_{t_0}^n)$ since  $f^{nq}_{t}(f_t(x))=f_t(x)$. In particular, there are only finitely many choices for $f_t$ and a continuity argument implies that $f$ is again constant.

To conclude, note that the variety $X$ and the line bundle $L$ have to be isotrivial themselves by assumption and that $(X,f,L)$ is a polarized endomorphism which is isotrivial.

The proof implies the independence on the regular part. 
\end{proof}

To finish the present discussion, we recall that to any subvariety $Z$ of $X$, which is defined over $\mathbf{K}$, we can associate a subvariety $\mathcal{Z}$ of $\mathcal{X}$ (which can be defined as the Zariski closure of $Z$ in $\mathcal{X}$) such that the restriction of $\pi$ to $\mathcal{Z}$ is flat over $\Lambda$, restricting $\Lambda$ if necessary.
In particular, to any point $\mathrm{x}\in X(\mathbf{K})$ with corresponding subvariety $\mathcal{x}$, we can associate a rational section $\sigma:\mathcal{B}\dashrightarrow\mathcal{X}$ of $\pi$, i.e.\  a rational map such that
\begin{enumerate}
\item $\pi\circ\sigma=\mathrm{id}_\mathcal{B}$,
\item $\sigma$ is regular over $\Lambda$ (restricting $\Lambda$ is necessary),
\item the Zariski closure of $\sigma(\Lambda)$ in $\mathcal{X}$ is $\mathcal{x}$.
\end{enumerate}

\subsection{Global properties of the bifurcation current}\label{sec:globalproperties}
 Let $(\mathcal{X},\mathcal{f},\mathcal{L})$ be a model of $(X,f,L)$ as above, with $L$ very ample. 

\smallskip

We use the notations of Theorem~\ref{tm:formulaheight}. Let $\widehat{\omega}$ be a closed positive form on $\mathcal{X}$ cohomologous to $c_1(\mathcal{L})$ and let 
 $\check{\omega}_\mathcal{B}:=\pi^*(\omega_{\mathcal{B}})$.  The closed positive $(1,1)$-form $\widehat{\omega}+\check{\omega}_\mathcal{B}$ is a K\"ahler form on $\mathcal{X}$ cohomologous to $c_1(\mathcal{M})$. Observe that replacing $L$ with a tensor product $L^{\otimes e}$ will multiply the different quantities by the same power of $e$ (namely $e^{\ell +1}$) so we can assume that $L$ is very ample and this hypothesis does not affect the statements below (see Remark~\ref{rem_replace_very_ample}).

~

Fix a regular part $\Lambda$ and let $S$ be a positive closed $(p,p)$ current on $\mathcal{X}_\Lambda$. For any Borel subset $\Omega$ of $\mathcal{X}_\Lambda$, we let
\begin{align*}
\|S\|_{\Omega} :=& \left\langle S, \mathbf{1}_\Omega(\check{\omega}_\mathcal{B}+\widehat{\omega})^{k+\dim\mathcal{B}-p}\right\rangle,
\end{align*}
where $\langle \cdot,\cdot \rangle$ is the duality bracket between currents and forms. 
When $\|S\|:=\|S\|_{\mathcal{X}_\Lambda}<+\infty$, the current $S$ extends trivially to the Zariski closure $\mathcal{X}$ of $\mathcal{X}_\Lambda$ as a closed positive $(p,p)$-current on $\mathcal{X}$ that we still denote by $S$ (\cite[Chapter III, Theorem (2.3)]{Demailly}).
Note that the fact that $\|S\|_{\mathcal{X}_\Lambda}$ is finite does not depend on the choice of regular part (if $\|S\|_{\mathcal{X}_{\Lambda'}}<+\infty$ for some $\Lambda'\subset\Lambda$, then so is $\|S\|_{\mathcal{X}_{\Lambda}}$).

\medskip

We use the notations of Section~\ref{sec:algebraic}. The following is a global version of Lemma~\ref{lm:growthmass}:
\begin{proposition}\label{Stable=bounded}
Let $(\mathcal{X},\mathcal{f},\mathcal{L})$ be an algebraic family of polarized endomorphisms. Let $\Lambda$ be a regular part and let $k:=\dim X_\lambda$ for any $\lambda\in \Lambda$. Then there exists a constant $C>0$ depending only on $(\mathcal{X},\mathcal{f},\mathcal{L})$ such that 
\begin{enumerate}
	\item  For any closed positive $(p,p)$-current $S$ on $\mathcal{X}$ with $0\leq p \leq \dim\mathcal{X}-1$ and any $n\geq1$,
	\[\left|\|\widehat{T}_\mathcal{f}\wedge S\|-d^{-n}\|(\mathcal{f}^n)^*(\widehat{\omega})\wedge S\|\right|\leq C d^{-n}\|S\|.\]
	\item For any $1\leq p \leq k$, any $0\leq r\leq \dim\mathcal{B}$ and any  closed positive $(p,p)$-current $S$ on $\mathcal{X}$ and any $n\geq1$, if $q:=k+\dim\mathcal{B}-r-p$, 
	\[\left|\int_{\mathcal{X}_\Lambda}(f^{n})_*(S)\wedge\widehat{\omega}^{q}\wedge\check{\omega}_\mathcal{B}^{r}-d^{nq}\int_{\mathcal{X}_\Lambda} S\wedge\widehat{T}_\mathcal{f}^{q}\wedge  \check{\omega}_\mathcal{B}^{r}\right|\leq C\sum\limits_{j<q} d^{nj}\|S\wedge\widehat{T}_\mathcal{f}^{j}\wedge  (\mathcal{f}^n)^*(\widehat{\omega})^{q-j-1}\wedge  \check{\omega}_\mathcal{B}^{r}\|.\] 
\end{enumerate}

\end{proposition}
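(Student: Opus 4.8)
The plan is to imitate the local computation in Lemma~\ref{lm:growthmass}, but now work globally on $\mathcal{X}$ (the Zariski closure of $\mathcal{X}_\Lambda$) rather than over a compact $K\Subset\Lambda$, replacing the use of the Chern--Levine--Nirenberg inequality by the fact that all the forms involved are restrictions of Fubini--Study forms, so their masses are computed by intersection numbers and are therefore finite and uniformly controlled. The key bookkeeping fact is that $d^{-n}(\mathcal{f}^n)^*\widehat\omega = \widehat T_\mathcal{f} + dd^c\psi_n$ with $\psi_n := g - d^{-n}(g\circ\mathcal{f}^n)$, where $g$ is the bounded continuous potential of $\widehat T_\mathcal{f}$ from the construction in Section~\ref{sec:fibered}; hence $\|\psi_n\|_{L^\infty(\mathcal{X}_\Lambda)}\leq (1+d^{-n})\|g\|_\infty \leq 2\|g\|_\infty$ uniformly in $n$. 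Note also $g$ extends to a bounded function on $\mathcal{X}$ since it is a difference of quasi-psh functions with the same Lelong numbers off a codimension $\geq 1$ set; this is what lets the trivial extensions across $\pi^{-1}(\mathcal{B}\setminus\Lambda)$ be taken.

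For part (1): the current $\widehat T_\mathcal{f}\wedge(\pi^*\omega_\mathcal{B})^{\ell-1}$ is closed positive of bidimension complementary to the K\"ahler class $\{\widehat\omega+\pi^*\omega_\mathcal{B}\}$, and on $\mathcal{X}_\Lambda$ its mass is bounded by the intersection number $\big(\mathcal{L}\cdot(\pi^*\mathcal{N})^{\ell-1}\cdot(\mathcal{L}+\pi^*\mathcal{N})^{k+\dim\mathcal{B}-\ell}\big)$, which is finite; this is where the trivial-extension remark just before the proposition is invoked. For the estimate, I would write
\begin{align*}
\|\widehat T_\mathcal{f}\wedge S\| - d^{-n}\|(\mathcal{f}^n)^*\widehat\omega\wedge S\|
&= \int_{\mathcal{X}_\Lambda} \big(\widehat T_\mathcal{f} - d^{-n}(\mathcal{f}^n)^*\widehat\omega\big)\wedge S\wedge \omega_{\mathcal{X}}^{\,k+\dim\mathcal{B}-\ell-1},
\end{align*}
with $\omega_\mathcal{X}:=\widehat\omega+\pi^*\omega_\mathcal{B}$, and then use $\widehat T_\mathcal{f}-d^{-n}(\mathcal{f}^n)^*\widehat\omega = -dd^c\psi_n$ together with Stokes on the compact $\mathcal{X}$ (legitimate because $S$ and $\omega_\mathcal{X}$ are closed and everything extends): the difference equals $-\int \psi_n\, S\wedge dd^c(\omega_\mathcal{X}^{k+\dim\mathcal{B}-\ell-1})=0$ if one is careful, so instead one keeps one factor and estimates $|\int \psi_n\, dd^c(\widehat\omega)\wedge S\wedge\omega_\mathcal{X}^{\cdots}|\leq \|\psi_n\|_\infty\,\|S\|\cdot(\text{bounded intersection number})$, and since $d^{-n}(\mathcal{f}^n)^*\widehat\omega$ differs from $\widehat\omega$ by $d\,dd^c(\text{bounded})$ one picks up exactly a factor $d^{-n}$; collecting constants into $C$ gives the claim.

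For part (2): substitute $(\pi^*\omega_\mathcal{B})^{\ell}\wedge\widehat\omega^{q} = (\pi^*\omega_\mathcal{B})^\ell\wedge\big(d^{-n}(\mathcal{f}^n)^*\widehat\omega - dd^c\psi_n\big)^{q}\cdot d^{nq}$ is the wrong direction; rather, starting from $\int (\mathcal{f}^n)_*(S)\wedge(\pi^*\omega_\mathcal{B})^\ell\wedge\widehat\omega^q$ and pushing the $(\pi^*\omega_\mathcal{B})^\ell$ factor through (it is $(\mathcal{f}^n)$-invariant as a pullback from $\mathcal{B}$), this equals $\int_{\mathcal{X}_\Lambda} S\wedge(\pi^*\omega_\mathcal{B})^\ell\wedge\big((\mathcal{f}^n)^*\widehat\omega\big)^q$; now write $(\mathcal{f}^n)^*\widehat\omega = d^n\widehat T_\mathcal{f} - dd^c(g\circ\mathcal{f}^n)$, expand the $q$-th power by the binomial/multilinearity formula into terms $d^{nj}\,\widehat T_\mathcal{f}^{\,j}\wedge\big(dd^c(g\circ\mathcal{f}^n)\big)^{q-j}$, isolate the top term $j=q$ which gives $d^{nq}\int \widehat T_\mathcal{f}^q\wedge S\wedge(\pi^*\omega_\mathcal{B})^\ell$, and bound each remaining term with $j\leq q-1$: integrate by parts once to move a $dd^c$ onto the bounded function $g\circ\mathcal{f}^n$ versus rewriting $dd^c(g\circ\mathcal{f}^n) = (\mathcal{f}^n)^*\widehat\omega - d^n\widehat\omega$, telescoping so that each such term is a sum of pieces of the shape $d^{nj}\,\widehat T_\mathcal{f}^{\,j}\wedge S\wedge(\mathcal{f}^n)^*(\widehat\omega)^{q-j-1}\wedge(\pi^*\omega_\mathcal{B})^\ell$ up to a bounded multiplicative constant coming from $\|g\|_\infty$ and binomial coefficients. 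The main obstacle, and the step to be most careful about, is the integration-by-parts / trivial-extension step across $\pi^{-1}(\mathcal{B}\setminus\Lambda)$: one must justify that the non-closed positive currents $(\mathcal{f}^n)_*(S)\wedge(\pi^*\omega_\mathcal{B})^\ell$ have uniformly bounded mass near the bad locus (so that Stokes' theorem on $\mathcal{X}$ applies with no boundary contribution), which follows because $(\pi^*\omega_\mathcal{B})^\ell$ kills $\ell$ vertical directions and $\widehat T_\mathcal{f}$ has bounded potentials, but writing this cleanly — rather than the Chern--Levine--Nirenberg shortcut available in the local Lemma~\ref{lm:growthmass} — is the delicate part; everything else is multilinear algebra and collecting constants into the single $C=C(\mathcal{X},\mathcal{f},\mathcal{L})$.
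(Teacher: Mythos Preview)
Your proposal has a genuine gap at the very first step. You assert that the Green potential $g$ (defined by $\widehat T_\mathcal{f}=\widehat\omega+dd^cg$) is bounded on $\mathcal{X}_\Lambda$ and extends to a bounded function on $\mathcal{X}$, and you then carry $\|\psi_n\|_{L^\infty}\leq 2\|g\|_\infty$ through every estimate. This is false in general: $\Lambda$ is quasi-projective but \emph{not} compact, and the one-step potential $u$ (with $\mathcal{f}^*\widehat\omega=d(\widehat\omega+dd^cu)$) typically blows up logarithmically as one approaches the bad locus $\mathcal{B}\setminus\Lambda$. Concretely, the paper's Lemma~\ref{goodgrowth} shows that the relevant potential $\phi_n$ satisfies $|\phi_n(z)|\leq d^{-n}(C\log^+\|\lambda\|+C')$, i.e.\ $\phi_n$ is unbounded on $\mathcal{X}_\Lambda$ for every $n$. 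Consequently your Stokes argument on the compact $\mathcal{X}$ is illegitimate: the currents $\widehat T_\mathcal{f}$ and $d^{-n}(\mathcal{f}^n)^*\widehat\omega$ are \emph{not} cohomologous on $\mathcal{X}$, only on $\mathcal{X}_\Lambda$, and the boundary contribution you dismiss is exactly the quantity one must control.

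The paper's fix is the point of the whole proposition. One uses the logarithmic growth estimate of Lemma~\ref{goodgrowth} together with a DSH cut-off function $\Psi_A(\lambda)=A^{-1}\big(\log\max(\|\lambda\|,e^A)-\log\max(\|\lambda\|,e^{2A})\big)$, which is $-1$ on a large ball, $0$ outside a larger ball, and has $dd^c\Psi_A=T_A^+-T_A^-$ with total mass $O(1/A)$. After integrating by parts against $\Psi_A\circ\pi$, one is left with $\sup_{\|\lambda\|\leq e^{2A}}|\phi_n|\cdot\|T_A^\pm\|\lesssim d^{-n}\cdot A\cdot A^{-1}=d^{-n}$, independent of $A$; letting $A\to\infty$ then gives the global estimate. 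This balancing act between the logarithmic growth of the potential and the $1/A$ decay of the cut-off's $dd^c$-mass is the genuine content here, and it is entirely absent from your sketch. Your multilinear expansion in part~(2) is fine once this is in place, but without it the argument does not start.
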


\begin{proof}
Let us consider first the case where $\Lambda$ is affine. We thus can define an embedding $\iota_1:\mathcal{B}\hookrightarrow\p^{M}$ with $\iota_1^{-1}(\mathbb{A}^{M}(\mathbb{C}))=\Lambda$. We already embedded $\mathcal{X}$ in $\p^N\times \mathcal{B}$ with $\mathcal{L}=\iota^*(\mathcal{O}_{\p^N}(1))$:
\[\mathcal{X}\stackrel{\iota}{\hookrightarrow} \p^N\times \mathcal{B}\stackrel{(\mathrm{id},\iota_1)}{\hookrightarrow} \p^N\times \p^M,\]
We now write $(z,t)$ for a point of $\p^N\times\p^M$ and let $\|\cdot\|$ be the standard Hermitian norm on $\mathbb{A}^M(\C)$. 
We also identify $\lambda=\pi({ x})$ with the second coordinate $t$ of $(\mathrm{id},\iota_1)\circ\iota({ x})$.
\begin{lemma}\label{goodgrowth}
There exist $C_1,C_2>0$ such that for all $n\geq1$, we can write
\[ \frac{1}{d^{n}}(\mathcal{f}^n)^*(\widehat{\omega}) - \widehat{T}_\mathcal{f} = \mathrm{dd}^c \phi_n \quad \text{on} \ \mathcal{X}_\Lambda, \]
where  $\phi_n:\mathcal{X}_\Lambda\to\mathbb{R}$ is a function such that $|\phi_n(z)| \leq d^{-n}(C_1 \log^{+}\|\lambda\|+C_2)$, for all $\lambda\in\Lambda$ and all $z\in X_\lambda$. 
\end{lemma}

Lemma~\ref{goodgrowth} will be proved below.
We take the lemma for granted and continue the proof. Following \cite[p. 381, Proof of Theorem 1.6]{GOV}, we take any closed positive $(p,p)$-current $S$ on $\mathcal{X}$ with $0\leq p \leq \dim\mathcal{X}-1$ and any $n\geq1$. For any $A>0$, we define the following test function 
\[
\Psi_A(\lambda):= \frac{  \log\max(\|\lambda\|, e^{2A})-\log\max(\|\lambda\|, e^A)}{A}.
\]
For any $R>0$, the current $\mathrm{dd}^c\log\max(\|\lambda\|, R)$ has mass $1$ on $\mathbb{C}^M$ for the Fubini-Study metric by Lelong-Poincar\'e formula, whence it restricts to $\Lambda$ as a current of mass $\deg_\mathcal{N}(\mathcal{B})$. Thus, $\Psi_A$ is continuous and DSH on $\Lambda$ with $\mathrm{dd}^c \Psi_A=T_A^+-T_A^-$
where $T^\pm_A$ are some positive closed $(1,1)$-currents whose masses are finite with $\|T^\pm_A \|\leq C'/A$ for some $C'>0$ depending neither on $A$ nor on $T^\pm_A$. Observe also that $\Psi_A$ is equal to $1$ in $B(0,e^A)$, and $0$ outside $B(0,e^{2A})$. We first prove point 1; pick $n\geq1$, then then, since $S$ is closed, Stokes theorem provides
\begin{align*}
J_n^A & := \left\langle\left(\widehat{T}_\mathcal{f}-\frac{1}{d^n}(\mathcal{f}^n)^*(\widehat{\omega})\right)\wedge \left(\widehat{\omega}+\check{\omega}_\mathcal{B}\right)^{k+\dim\mathcal{B}-p-1}\wedge  S,\Psi_A\circ \pi\right\rangle\\
& = \left\langle \phi_n\cdot \left(\widehat{\omega}+\check{\omega}_\mathcal{B}\right)^{k+\dim\mathcal{B}-p-1},\mathrm{dd}^c (\Psi_A\circ \pi)\wedge  S \right\rangle
\end{align*}
and the definition of $\Psi_A$ implies
\begin{align*}
|J_n^A| & \leq \int_{\iota^{-1}(\p^N\times B(0,e^{2A}))}|\phi_n| \left(\widehat{\omega}+ \check{\omega}_\mathcal{B}\right)^{k+\dim\mathcal{B}-p-1}\wedge \pi^*(T_A^{+}+T_A^{-})\wedge  S \\
&\leq \sup_{\p^N\times B(0,e^{2A})}|\phi_n| \int_{\mathcal{X}} \left(\widehat{\omega}+ \check{\omega}_\mathcal{B}\right)^{k+\dim\mathcal{B}-p-1}\wedge \pi^*(T_A^{+}+T_A^{-})\wedge  S \\
&  \leq\frac{C_3}{A}\sup_{\p^N\times B(0,e^{2A})}|\phi_n|\cdot\|S\|,
\end{align*}
for some constant $C_3>0$ since the mass of the wedge product can be bounded in cohomology, up to a multiplicative constant,  by the product of the masses. Lemma~\ref{goodgrowth} then implies $|J_n^A|\leq C_4d^{-n}\|S\|$ for some constant $C_4>0$ which is independent of $A$. Making $A\to\infty$ gives the first point of the proposition.

\medskip

We now prove the second estimate of the Proposition. Let $q:=k+\dim\mathcal{B}-r-p$, since $\widehat{T}_\mathcal{f}=\frac{1}{d^{n}}(\mathcal{f}^{n})^{*}(\widehat{\omega})-\mathrm{dd}^c\phi_n$,
a direct computation gives
\begin{align*} \widehat{T}_\mathcal{f}^{q}-\left(\frac{1}{d^{n}}(\mathcal{f}^{n})^{*}\widehat{\omega}\right)^q &= \left( \widehat{T}_\mathcal{f}-\frac{1}{d^{n}}(\mathcal{f}^{n})^{*}\widehat{\omega}\right) \wedge \sum_{s=0}^{q-1}  \widehat{T}_\mathcal{f}^{s} \wedge  \left(\frac{1}{d^{n}}(\mathcal{f}^{n})^{*}\widehat{\omega}\right)^{q-1-s} \\
& = \sum_{s=0}^{q-1}  \mathrm{dd}^c \phi_n \wedge  \widehat{T}_\mathcal{f}^{s} \wedge  \left(\frac{1}{d^{n}}(\mathcal{f}^{n})^{*}\widehat{\omega}\right)^{q-1-s}. \end{align*}
Since $ \pi\circ \mathcal{f}^{n}= \pi$, 
\begin{align*}
I^A_n & := \left\langle\frac{1}{d^{qn}}(\mathcal{f}^{n})_*(S)\wedge\left(\widehat{T}_\mathcal{f}^{q}-\widehat{\omega}^{q}\right)\wedge \check{\omega}_\mathcal{B}^{r} ,\Psi_A\circ \pi\right\rangle\\
& =\left\langle S\wedge \left(\widehat{T}_\mathcal{f}^{q}-\left(\frac{1}{d^{n}}(\mathcal{f}^{n})^{*}\widehat{\omega}\right)^{q}\right),\Psi_A\circ \pi\circ f^{n}\cdot  ((\pi\circ \mathcal{f}^n)^*\omega_\mathcal{B})^{r} \right\rangle\\
& = \sum_{s=0}^{q-1}d^{-n(q-1-s)} \left\langle S\wedge(\mathrm{dd}^c\phi_n)\wedge \widehat{T}_\mathcal{f}^{s}\wedge(\mathcal{f}^n)^*(\widehat{\omega})^{q-1-s},\Psi_A\circ \pi\cdot  \check{\omega}_\mathcal{B}^{r} \right\rangle\\
& = \sum_{s=0}^{q-1}d^{-n(q-1-s)}\int_\mathcal{X} \phi_n\cdot S\wedge \widehat{T}_\mathcal{f}^{s}\wedge(\mathcal{f}^n)^*(\widehat{\omega})^{q-1-s}\wedge \mathrm{dd}^c(\Psi_A\circ \pi)\wedge  \check{\omega}_\mathcal{B}^{r} ,
\end{align*}
where we used  Stokes formula.

\smallskip

We now let $S_{s}:=S\wedge \widehat{T}_\mathcal{f}^{s}\wedge\check{\omega}_\mathcal{B}^{r}$ for any $0\leq s\leq q-1$. The above implies
\begin{align*}
|I_n^A| & \leq \sum_{s=0}^{q-1}d^{-n(q-1-s)}\int_{\mathcal{X}} |\phi_n|\cdot S_s\wedge (\mathcal{f}^n)^*(\widehat{\omega})^{q-1-s}\wedge \pi^*(T_A^++T_A^-)\\
 & \leq \sup_{\p^N\times B(0,e^{2A})}|\phi_n|\sum_{s=0}^{q-1}d^{-n(q-1-s)}\int_{\mathcal{X}} S_s\wedge (\mathcal{f}^n)^*(\widehat{\omega})^{q-1-s}\wedge \pi^*(T_A^++T_A^-)\\
& \leq \sum_{s=0}^{q-1}d^{-n(q-1-s)}\frac{C_3}{A}\cdot \| S_s\wedge \left((\mathcal{f}^n)^*(\widehat{\omega})\right)^{q-1-s}\|\cdot \sup_{\p^N\times B(0,e^{2A})}|\phi_n|,
\end{align*}
for some constant $C_5>0$, since again, the mass of the wedge product can be bounded in cohomology, up to a multiplicative constant,  by the product of the masses. Using again Lemma \ref{goodgrowth}, we find a constant $C_6>0$ such that
\begin{align*}
|I_n^A| & \leq C_6\sum_{s=0}^{q-1}d^{-n(q-s)}\cdot \| S_s\wedge \left((\mathcal{f}^n)^*(\widehat{\omega})\right)^{q-1-s}\|.
\end{align*}
Since the constant does not depend on $A$, we can make $A\to\infty$ and multiply by $d^{nq}$ to complete the proof. 

We now explain how to deduce the result for the case of the maximal regular part $\Lambda_{\max}$. Let $\Lambda \subset \Lambda_{\max}$ be an affine Zariski dense open subset of $\Lambda_{\max}$. Observe that $\widehat{\omega}$, $\check{\omega}_\mathcal{B}$ have continuous potentials, and $\widehat{T}_\mathcal{f}$ has continuous potentials on $\Lambda_{\max}\backslash \Lambda$. In particular, all the currents $(f^{n})_*(S)\wedge\widehat{\omega}^{q}\wedge\check{\omega}_\mathcal{B}^{r}$,  $\widehat{T}_\mathcal{f}^{q}\wedge S\wedge \check{\omega}_\mathcal{B}^{r}$, 
$\widehat{T}_\mathcal{f}^{s}\wedge S\wedge (\mathcal{f}^n)^*(\widehat{\omega})^{q-1-s}\wedge  \check{\omega}_\mathcal{B}^{r}$ give no mass to the analytic set $\pi^{-1}(\Lambda_{\max}\backslash \Lambda)$. Hence, the inequalities proved for $\Lambda$ affine also stand for $\Lambda_{\max}$ and we can deduce from that case the proposition for any regular part.
\end{proof}

We now give the proof of Lemma~\ref{goodgrowth}.

\begin{proof}[Proof of  Lemma \ref{goodgrowth}] Let $F:\p^N\times\Lambda\to\p^N\times\Lambda$ be the family induced by $(\mathcal{X},\mathcal{f},\mathcal{L})$.
As $H^1(\operatorname{End}_d(\p^N), \R)=\{0\}$ by \cite[Lemma 4.9]{BB1}, there is a family of polynomial lifts $\tilde H:\C^{N+1}\times \operatorname{End}_d(\p^N)\to\C^{N+1}\times \operatorname{End}_d(\p^N)$ which induces the universal family $h:\p^N\times \operatorname{End}_d(\p^N)\to \p^N\times \operatorname{End}_d(\p^N)$. Now, the family $F:\p^N\times\Lambda\to\p^N\times\Lambda$ corresponds to a morphism $\psi:\Lambda \to\operatorname{End}_d(\p^N)$ and we can define a family of homogeneous polynomial lifts $\tilde F: \C^{N+1}\times \Lambda\to\C^{N+1}\times \Lambda$ by letting $\tilde F_\lambda:=\tilde H_{\psi(\lambda)}$, for all $\lambda\in \Lambda$.

We thus can write $\tilde{F}_\lambda=(P_{0,\lambda},\ldots,P_{N,\lambda}):\C^{N+1}\to\C^{N+1}$, for any $\lambda\in\Lambda$, where $P_{i,\lambda}\in\C[\Lambda][z_0,\ldots,z_N]$ are homogeneous polynomials and $\mathrm{Res}(P_{0,\lambda},\ldots,P_{N,\lambda})\in \C^*$ if $\lambda\in\Lambda$. Since $p\mapsto\frac{1}{d}\log\|\tilde{F}_\lambda(p)\|-\log\|p\|$ is $0$-homogeneous, it induces a function $g_0:\p^N(\C)\times\Lambda\to\R$ defined, for $(z,\lambda)\in\p^N(\C)\times\Lambda$, by $g_0(z,\lambda):=\frac{1}{d}\log\|\tilde{F}_\lambda(p)\|-\log\|p\|$, for any $p=(p_0,\ldots,p_N)\in\C^{N+1}-\{0\}$ such that $z=[p_0:\cdots:p_N]$. We have
$d^{-1}{F}^*\hat{\omega}_{\mathrm{FS}}-\hat{\omega}_{\mathrm{FS}}=\mathrm{dd}^cg_0$.
By construction, 
\begin{align}
\widehat{T}_{F}-\frac{1}{d^n}(F^n)^*\hat{\omega}_{\mathrm{FS}}=\mathrm{dd}^c\left(\sum_{j=n}^\infty\frac{1}{d^j}g_0\circ{F}^j \right).\label{estimatedegenerate}
\end{align}
Since the coefficients of $\tilde{F}_\lambda$ in a given system of homogeneous coordinates are in $\C[\Lambda]$, then $\mathrm{Res}(\tilde{F}_\lambda)\in \C[\Lambda]$, so that 
\[\left|\log|\mathrm{Res}(\tilde{F}_\lambda)|\right|\leq C_1\log^+\|\lambda\|+C_2.\]
An application of the homogeneous Hilbert's Nullstellensatz (see, e.g., \cite[proof of Lemma 6.5]{GOV}) implies there exist a constant $C>0$ and an integer $N\geq1$ such that
\[C^{-1}|\mathrm{Res}(\tilde{F}_\lambda)|\leq \frac{\|\tilde{F}_\lambda(p)\|}{\|p\|^d}\leq C \max\left(\|\lambda\|,1\right)^{\kappa}\quad \text{for  any} \ p\in\C^{N+1}\setminus\{0\} \ \text{and  any} \ \lambda\in\Lambda.\]
In particular, since $g_0(z,\lambda)=\log\|\tilde{F}_\lambda(p)\|/\|p\|^d$, this gives easily $|g_0(z,\lambda)|\leq \kappa \log^+\|\lambda\|+\log(C)$ for any $z\in\p^k$ and any $\lambda\in\Lambda$.

\medskip

Using that $\iota\circ \mathcal{f}=F\circ\iota$, we deduce the lemma from the above.
\end{proof}

\subsection{Global height function versus mass of a current}\label{sec:height}

Our choice of an ample line bundle $\mathcal{N}$ on $\mathcal{B}$ provides a naive height $h_{X,L}$ on $X(\overline{\mathbf{K}})$: For any irreducible subvariety $Z$ of $X$ defined over $\mathbf{K}$, of dimension $0\leq \ell\leq k=\dim X$,
the height $h_{X,L}(Z)$ can be defined as the intersection number 
\[h_{X,L}(Z):=\left(\mathcal{Z}\cdot c_1(\mathcal{L})^{\ell+1}\cdot c_1(\pi^*\mathcal{N})^{\dim\mathcal{B}-1}\right),\]
where $\mathcal{Z}$ is the Zariski closure of $Z$ in $\mathcal{X}$, see, e.g., \cite{Gubler,Gubler2,Faber,Cantat-Gao-Habegger-Xie}. As $\mathcal{Z}$ is irreducible, one has $\dim(\mathcal{Z}\cap (\mathcal{X}\setminus\mathcal{X}_\Lambda))<\dim\mathcal{Z}$ for any regular part $\Lambda$ and since $\check{\omega}_\mathcal{B}$ and $\widehat{\omega}$ have continuous potentials, the height $h_{X,L}(Z)$ of $Z$ can be computed as
\begin{align*}
h_{X,L}(Z) =\int_{\mathcal{X}}\widehat{\omega}^{\ell+1}\wedge \check{\omega}_\mathcal{B}^{\dim\mathcal{B}-1}\wedge[\mathcal{Z}]=\int_{\mathcal{X}_\Lambda}\widehat{\omega}^{\ell+1}\wedge \check{\omega}_\mathcal{B}^{\dim\mathcal{B}-1}\wedge[\mathcal{Z}],
\end{align*}
independently of the regular part. When $Z$ is defined over a finite extension $\mathbf{K}'$ of $\mathbf{K}$, we let $\rho':\mathcal{B}'\to\mathcal{B}$ be the normalization of $\mathcal{B}$ in $\mathbf{K}'$. If $\mathcal{X}':=\mathcal{X}\times_\mathcal{B}\mathcal{B}'$, the projection $\rho:\mathcal{X}'\to\mathcal{X}$ onto the first factor is a finite branched cover and, if $\mathcal{Z}'$ is the Zariski closure of $Z$ in $\mathcal{X}'$, we can set
\begin{align*}
h_{X,L}(Z) & =\frac{1}{[\mathbf{K}':\mathbf{K}]}\left(\mathcal{Z}'\cdot c_1(\rho^*\mathcal{L})^{\ell+1}\cdot c_1(\rho^*\pi^*\mathcal{N})^{\dim\mathcal{B}-1}\right).
\end{align*}
Up to base change, we thus can assume in the sequel that $Z$ is defined over $\mathbf{K}$.

\medskip

By the functorial properties of Weil heights, and since $f^*L\simeq L^{\otimes d}$, we have $h_{X,L}\circ f=d\cdot h_{X,L}+O(1)$
on $X(\bar{\mathbf{K}})$ (see, e.g.\ \cite{bombieri-gubler}). Following \cite{CS-height}, we define the \emph{canonical height} of $f$ as $\widehat{h}_f:=\lim_{n\to\infty}\frac{1}{d^n}h_{X,L}\circ f^n$.
By \cite[Theorem 1.1]{CS-height}, it is the unique function $\hat{h}_f:X(\bar{\mathbf{K}})\to\R_+$ satisfying
\begin{enumerate}
\item $\widehat{h}_f\circ f=d\cdot \widehat{h}_f$,
\item $\widehat{h}_f=h_{X,L}+O(1)$.
\end{enumerate}
Note that $\widehat{h}_f$ denotes both the function on $X(\bar{\mathbf{K}})$ and the canonical height on subvarieties in $X_{\bar{\mathbf{K}}}$ (of course, for points, that is zero dimensional variety, both definitions coincide).

\begin{proof}[Proof of Theorem~\ref{tm:formulaheight}]
Fix a regular part $\Lambda$. One can write
\[h_{X,L}(f^n(Z))=\int_{\mathcal{X}_\Lambda}(\mathcal{f}^n)_*[\mathcal{Z}]\wedge \widehat{\omega}^{\ell+1}\wedge \check{\omega}_\mathcal{B}^{\dim\mathcal{B}-1}.\]
The second point of Proposition~\ref{Stable=bounded} with $p= k- \ell$, $r= \dim(\mathcal{B})-1$ and thus $q=k+\dim \mathcal{B}-r-p=\ell +1$ gives 
\[h_{X,L}(f^n(Z))= d^{n (\ell+1)}\int_{\mathcal{X}_\Lambda} [\mathcal{Z}]\wedge\widehat{T}_\mathcal{f}^{\ell +1}\wedge  \check{\omega}_\mathcal{B}^{\dim{\mathcal{B}}-1} + O(d^{n\ell}). \]
In particular,  
\begin{align*}
\widehat{h}_{f}(Z) & :=\lim_{n\to\infty}\frac{1}{d^{n(\ell+1)}}h_{X,L}((f^n)_*(Z))
\end{align*}
is well defined and
\begin{align*}
\widehat{h}_{f}(Z) = \int_{\mathcal{X}_\Lambda} [\mathcal{Z}]\wedge\widehat{T}_\mathcal{f}^{\ell +1}\wedge \check{\omega}_\mathcal{B}^{\dim{\mathcal{B}}-1}.
\end{align*}
\end{proof}

When $Z$ is defined over a finite extension $\mathbf{K}'$ of $\mathbf{K}$, recall we denoted $\rho':\mathcal{B}'\to\mathcal{B}$ the normalization of $\mathcal{B}$ in $\mathbf{K}'$,  $\mathcal{X}':=\mathcal{X}\times_\mathcal{B}\mathcal{B}'$, the projection $\rho:\mathcal{X}'\to\mathcal{X}$ onto the first factor is a finite branched cover and,  $\mathcal{Z}'$ the Zariski closure of $Z$ in $\mathcal{X}'$.
We also let $(\mathcal{X}',\mathcal{f'},\rho^*\mathcal{L})$ the corresponding model with  $ \rho \circ \mathcal{f'}= \mathcal{f}\circ \rho $. In particular, $\widehat{T}_\mathcal{f'}= \rho^*(\widehat{T}_\mathcal{f})$.
Hence, by the above
\begin{align*}
	\widehat{h}_{f}(Z) & =\frac{1}{[\mathbf{K}':\mathbf{K}]} \int_{\mathcal{X}'_\Lambda} [\mathcal{Z}']\wedge\widehat{T}_\mathcal{f'}^{\ell +1}\wedge \check{\omega}_\mathcal{B'}^{\dim{\mathcal{B'}}-1}\\
	&= \frac{1}{[\mathbf{K}':\mathbf{K}]} \int_{\mathcal{X}'_\Lambda} [\mathcal{Z}']\wedge \rho^*(\widehat{T}_\mathcal{f})^{\ell +1}\wedge  \rho^*(\check{\omega}_\mathcal{B})^{\dim{\mathcal{B}}-1}
\end{align*}
since $\dim{\mathcal{B'}}= \dim{\mathcal{B}}$, $\check{\omega}_\mathcal{B'}= \rho^*(\check{\omega}_\mathcal{B'})$, and $\widehat{T}_\mathcal{f'}=\rho^*(\widehat{T}_\mathcal{f})$. Pushing forward to $\mathcal{X}$
\begin{align*}
	\widehat{h}_{f}(Z) 	&= \frac{1}{[\mathbf{K}':\mathbf{K}]} \int_{\mathcal{X}_\Lambda} \rho_*([\mathcal{Z}'])\wedge \widehat{T}_\mathcal{f}^{\ell +1}\wedge  \check{\omega}_\mathcal{B}^{\dim{\mathcal{B}}-1}
\end{align*}
As $\rho_*([\mathcal{Z}'])=[\mathbf{K}':\mathbf{K}] . [\mathcal{Z}]$, the result follows for $Z$ defined over a finite extension $\mathbf{K}'$.

\begin{remark}\normalfont
Remark that $\widehat{h}_{f}(f_*(Z))=d^{\ell+1}\cdot \widehat{h}_{f}(Z)$ where $\ell= \dim Z$. In particular, if $Z$ is preperiodic, i.e.\ if there exists $n>m\geq0$ such that $f^n(Z)=f^m(Z)$, we have $\widehat{h}_{f}(Z)=0$. Note that if $f(Z)=Z$ we have $\mathcal{f}_*[\mathcal{Z}]= d^{\ell} [\mathcal{Z}] $ in the sense of currents, since in any fiber $X_\lambda\cap \mathcal{Z}$, $f_\lambda$ is a polarized endomorphism of topological degree $d^\ell$ (see Lemma~\ref{lm:finite}). 
\end{remark}

Even though we do not need it in the sequel, we can prove the following statement, compare with \cite[Corollary 3.3]{Cantat-Gao-Habegger-Xie}.
\begin{corollary} Under the hypothesis of Theorem~\ref{tm:formulaheight},
$\widehat{h}_f(Z)=0$ if and only if for any closed positive $(1,1)$-current $\nu$ on $\mathcal{B}$ with continuous potentials we have
\[\int_{\mathcal{X}_\Lambda}\widehat{T}_\mathcal{f}^{\ell+1}\wedge[\mathcal{Z}]\wedge(\pi^*\nu)^{\dim\mathcal{B}-1}=0.\]
\end{corollary}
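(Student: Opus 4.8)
The plan is to leverage Theorem~\ref{tm:formulaheight} together with the homogeneity $\widehat{h}_f(f_*(Z)) = d^{\ell+1}\widehat{h}_f(Z)$ and a density/positivity argument. One direction is immediate: if the displayed integral vanishes for every closed positive $(1,1)$-current $\nu$ with continuous potentials, then in particular it vanishes for $\nu = \omega_\mathcal{B}$, and Theorem~\ref{tm:formulaheight} gives $\widehat{h}_f(Z) = 0$. So the content is the forward implication: $\widehat{h}_f(Z) = 0$ forces the integral against $(\pi^*\nu)^{\dim\mathcal{B}-1}$ to vanish for \emph{all} such $\nu$.

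First I would reduce to a single, fixed reference form. Fix $\omega_\mathcal{B}$ a Kähler form cohomologous to $c_1(\mathcal{N})$ as in the theorem. For an arbitrary closed positive $(1,1)$-current $\nu$ with continuous potentials, after scaling we may assume $\nu \leq \omega_\mathcal{B}$ in the sense that $\omega_\mathcal{B} - \nu$ is again a closed positive current (this uses that $\mathcal{N}$ is ample, hence a large multiple of $\omega_\mathcal{B}$ dominates any fixed continuous-potential current; one then rescales $\nu$, which is harmless since the integral is homogeneous of degree $\dim\mathcal{B}-1$ in $\nu$). Writing $\omega_\mathcal{B} = \nu + (\omega_\mathcal{B} - \nu)$ and expanding $(\pi^*\omega_\mathcal{B})^{\dim\mathcal{B}-1}$ multinomially, the quantity
\[
\int_{\mathcal{X}_\Lambda}\widehat{T}_\mathcal{f}^{\ell+1}\wedge[\mathcal{Z}]\wedge(\pi^*\omega_\mathcal{B})^{\dim\mathcal{B}-1}
\]
becomes a sum of \emph{nonnegative} terms (each factor is a closed positive current, $[\mathcal{Z}]$ is horizontal, $\widehat{T}_\mathcal{f}$ has continuous potentials so all wedge products are legitimate and positive), one of which is exactly $\int_{\mathcal{X}_\Lambda}\widehat{T}_\mathcal{f}^{\ell+1}\wedge[\mathcal{Z}]\wedge(\pi^*\nu)^{\dim\mathcal{B}-1}$ times a positive binomial coefficient. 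Since the left-hand side equals $\widehat{h}_f(Z) = 0$ by Theorem~\ref{tm:formulaheight}, every summand vanishes, and in particular the $\nu$-term does.

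The step I expect to be the main obstacle is making the decomposition $\omega_\mathcal{B} - \nu \geq 0$ rigorous at the level of currents with only continuous potentials, and ensuring all the intermediate wedge products (mixing $\widehat{T}_\mathcal{f}$, $[\mathcal{Z}]$, $\pi^*\nu$, and $\pi^*(\omega_\mathcal{B}-\nu)$) are well-defined closed positive currents on the possibly singular space $\mathcal{X}_\Lambda$. This is where one invokes Bedford--Taylor theory for continuous potentials (so that $\widehat{T}_\mathcal{f}^{\ell+1}\wedge(\pi^*\nu)^j\wedge(\pi^*(\omega_\mathcal{B}-\nu))^{\dim\mathcal{B}-1-j}$ makes sense) together with the horizontality of $[\mathcal{Z}]$ and Skoda's theorem for the trivial extension across $\pi^{-1}(\mathcal{B}\setminus\Lambda)$, exactly as in the proof of Theorem~\ref{tm:formulaheight}; alternatively one can pull back to a resolution of singularities of $\mathcal{X}$ as in the proof of Proposition~\ref{prop:endopolarized}. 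A cleaner alternative, avoiding the domination issue entirely, is to approximate: if $\nu$ has continuous local potentials, then $\varepsilon\,\omega_\mathcal{B} + \nu$ is strictly positive, $c_1(\mathcal{N} + $ small$)$-type, and one writes $\varepsilon\,\omega_\mathcal{B} + \nu \leq C_\varepsilon\,\omega_\mathcal{B}$ for some constant (now the cohomological comparison is between a genuine Kähler class and a slightly perturbed one, which is automatic by ampleness), applies the positivity argument, and lets $\varepsilon \to 0$ using continuity of $\nu \mapsto \int \widehat{T}_\mathcal{f}^{\ell+1}\wedge[\mathcal{Z}]\wedge(\pi^*\nu)^{\dim\mathcal{B}-1}$ under uniform convergence of potentials.
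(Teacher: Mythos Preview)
Your overall strategy---reduce the forward implication to a positivity/domination comparison with the reference form $\omega_\mathcal{B}$---is exactly the paper's. The reverse implication is handled identically. But the key step where you assert that, after rescaling, $\omega_\mathcal{B}-\nu$ is a closed positive current is not valid for an arbitrary closed positive $(1,1)$-current $\nu$ with merely continuous potentials. Ampleness of $\mathcal{N}$ only gives a \emph{cohomological} inequality $[\nu]\leq C[\omega_\mathcal{B}]$, not a pointwise one; a continuous psh potential can have Monge--Amp\`ere mass concentrating (e.g.\ $u(z)=|z|$ on $\mathbb{C}$ gives $dd^cu$ with unbounded density), so no constant $C$ makes $C\omega_\mathcal{B}-\nu\geq0$ as currents. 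Your $\varepsilon$-perturbation alternative inherits the same problem: making $\varepsilon\,\omega_\mathcal{B}+\nu$ strictly positive does nothing for the \emph{upper} bound you need.

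The paper repairs exactly this point with one extra reduction you are missing: first approximate $\nu$ locally by \emph{smooth} positive forms using Richberg's theorem (continuous psh functions are locally uniform limits of smooth psh functions), and use continuity of the wedge product under this approximation. Once $\nu$ is smooth, on any relatively compact $U\Subset\Lambda$ the strict positivity of the K\"ahler form $\omega_\mathcal{B}$ gives $0\leq\nu\leq C\omega_\mathcal{B}$ pointwise for some $C=C(U)$, and then
\[
0\leq\int_{\pi^{-1}(U)}\widehat{T}_\mathcal{f}^{\ell+1}\wedge[\mathcal{Z}]\wedge(\pi^*\nu)^{\dim\mathcal{B}-1}\leq C^{\dim\mathcal{B}-1}\int_{\mathcal{X}_\Lambda}\widehat{T}_\mathcal{f}^{\ell+1}\wedge[\mathcal{Z}]\wedge(\pi^*\omega_\mathcal{B})^{\dim\mathcal{B}-1}=0,
\]
directly, with no multinomial expansion needed. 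Taking the supremum over $U\Subset\Lambda$ finishes. So your plan is salvageable, but the missing ingredient is the smooth approximation step; without it the domination is simply false.
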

\begin{proof}
Theorem~\ref{tm:formulaheight} implies $\widehat{h}_f(Z)=0$ if and only if
\begin{align}
\int_{\mathcal{X}_\Lambda}\widehat{T}_\mathcal{f}^{\ell+1}\wedge[\mathcal{Z}]\wedge(\pi^*\omega_\mathcal{B})^{\dim\mathcal{B}-1}=0.\label{eq:caseomega}
\end{align}
It is thus sufficient to prove that if equation \eqref{eq:caseomega} holds, then
\begin{align}
\int_{\pi^{-1}(U)}\widehat{T}_\mathcal{f}^{\ell+1}\wedge[\mathcal{Z}]\wedge(\pi^*\nu)^{\dim\mathcal{B}-1}=0\label{eq:caseomega2}
\end{align}
 for any  closed positive $(1,1)$-current $\nu$ on $\mathcal{B}$ with continuous potentials and any open subset $U$ of $\Lambda$ which is relatively compact in $\Lambda$. It is sufficient to consider the case where $\nu$ is smooth since, by Richberg's theorem~\cite[\S 5.21, p. 43]{Demailly}, any continuous psh function can be locally approximated by smooth psh functions and the wedge product in \eqref{eq:caseomega2} is continuous with respect to $\nu$. Pick such a smooth current $\nu$. As $\omega_\mathcal{B}$ is strictly positive on $U$, there exists $C>0$ such that $C\omega_\mathcal{B}\geq \nu\geq0$ in the weak sense of currents. In particular,
\begin{align*}
0\leq \int_{\pi^{-1}(U)}\widehat{T}_\mathcal{f}^{\ell+1}\wedge[\mathcal{Z}]\wedge(\pi^*\nu)^{\dim\mathcal{B}-1}\leq \int_{\mathcal{X}_\Lambda}\widehat{T}_\mathcal{f}^{\ell+1}\wedge[\mathcal{Z}]\wedge(\pi^*\omega_\mathcal{B})^{\dim\mathcal{B}-1}=0.
\end{align*}
\end{proof}

\section{Stability of algebraic dynamical pairs}\label{section4}

\subsection{Several characterizations of stability}
{ Dynamical pairs (resp. stable dynamical pairs) were defined in the analytic setting in Definition~\ref{def:dynamical_pair} (resp. Definition~\ref{def_stable}). We now define the corresponding notions in the algebraic case.}
\begin{definition}\label{algebraic_pair}
A $p$-measurable dynamical pair $((\mathcal{X},\mathcal{f},\mathcal{L}),S)$ is \emph{algebraic} if
\begin{itemize}
\item[$(\dag_1)$] $(\mathcal{X},\mathcal{f},\mathcal{L})$ is an algebraic family of polarized endomorphisms.
\item[$(\dag_2)$]  $S=[\mathcal{Z}]$ where $\mathcal{Z}$ is an irreducible algebraic subvariety of $\mathcal{X}$ of codimension $p \leq k$ such that $\pi|_\mathcal{Z}:\mathcal{Z}\to \mathcal{B}$ is surjective,  where $k$ is the relative dimension of $\mathcal{X}\to\mathcal{B}$. 
\end{itemize}
 We say that the $p$-measurable dynamical pair $((\mathcal{X},\mathcal{f},\mathcal{L}),[\mathcal{Z}])$ is \emph{stable} if $(\mathcal{X}_\Lambda,\mathcal{f},\mathcal{L},[\mathcal{Z}])$ is stable for a given regular part $\Lambda$ of $(\mathcal{X},\mathcal{f},\mathcal{L})$.
\end{definition}
The notion of stability is independent of the chosen regular part $\Lambda$. Indeed, $(\mathcal{X}_\Lambda,\mathcal{f},\mathcal{L},[\mathcal{Z}])$ is stable if and only if the current $T_{f,\mathcal{Z}}$ vanishes identically on $\Lambda$. In particular, its trivial extension to $\mathcal{B}$ is identically zero.

\begin{example}\normalfont
 Let $(\mathcal{X},\mathcal{f},\mathcal{L})$ be an algebraic family of polarized endomorphisms.  Set 
\[\mathrm{Per}_\mathcal{f}(n,m):=\{z_0\in\mathcal{X}\, : \ \mathcal{f}^{n}(z_0)=\mathcal{f}^{m}(z_0)\}\]
for any $n>m\geq0$.  For $n-m$ large enough the set $\mathrm{Per}_\mathcal{f}(n,m)$ is non-empty by Theorem~\ref{prop:endopolarized} and defines a subvariety of $\mathcal{X}$ of pure dimension $\dim\mathcal{B}$, flat over some regular part $\Lambda$ (which depends on $n$ and $m$), by Lemma~\ref{lm:finite}.
As a consequence, the current $[\mathrm{Per}_\mathcal{f}(n,m)]$ is a closed positive horizontal $(k,k)$-current on $\mathcal{X}_\Lambda$.
As an immediate application of Theorem~\ref{tm:formulaheight} and Theorem~\ref{prop:endopolarized}, we have the following, see also Corollary~\ref{cor:stable-h0_intro}.
\end{example}

\begin{corollary}\label{cor:stable-h0}
For any $n>m\geq0$ with $n-m$ large enough, the set $\mathrm{Per}_\mathcal{f}(n,m)$ is non-empty. Moreover, for any irreducible component $\mathcal{C}$ of $\mathrm{Per}_\mathcal{f}(n,m)$, the $k$-measurable dynamical pair $((\mathcal{X},\mathcal{f},\mathcal{L}),[\mathcal{C}])$ is stable.
\end{corollary}

We use the notation of Theorem~\ref{tm:formulaheight}. Recall that, when $\mathcal{Z}$ is a subvariety of $\mathcal{X}$ of dimension $\ell+\dim\mathcal{B}$, the degree of $\mathcal{Z}$ relatively to the ample line bundle $\mathcal{M}=\mathcal{L}\otimes \pi^*(\mathcal{N})$ is given by
\[\deg_\mathcal{M}(\mathcal{Z})=\left(\mathcal{Z}\cdot c_1(\mathcal{M})^{\ell+\dim\mathcal{B}}\right)=\int_{\mathcal{Z}_\Lambda}\left(\widehat{\omega}+\check{\omega}_\mathcal{B}\right)^{\ell+\dim\mathcal{B}}=\|[\mathcal{Z}]\|,\]
where $\widehat{\omega}$ is cohomologous to $c_1(\mathcal{L})$

The following is a more general version of item 1.~of Theorem~\ref{tm:Pk_intro}. Observe that, in what follows, when $Z$ is a point (i.e. $p=k$), we have obviously $\deg(f|_Z)=1$, and item $3$ in Theorem~\ref{tm:caracterization-stable} below means $\deg_\mathcal{M}(\mathcal{Z}_n)=O(1)$, as claimed in the introduction. In particular, when $p=k$, the implication $2\implies 3$ in the following theorem follows immediately from Theorem~\ref{tm:formulaheight} and the fact that $\widehat{h}_f - h_{X,L}= O(1)$.

 The bifurcation current $T_{\mathcal{f},[\mathcal{Z}]}$ was defined in Section~\ref{sec:bifcur} and $\deg(f^n|_Z)$ in Section~\ref{multiplicity}.
\begin{theorem}\label{tm:caracterization-stable}
 Let $((\mathcal{X},\mathcal{f},\mathcal{L}),[\mathcal{Z}])$ be an algebraic $p$-measurable dynamical pair with $1\leq p\leq k$, which is a model of $(X,f,L,Z)$ over the field $\mathbf{K}$ of rational functions of a normal complex projective variety $\mathcal{B}$. Let $\mathcal{Z}_n$ be the Zariski closure of $\mathcal{f}^n(\mathcal{Z}_\Lambda)$ in $\mathcal{X}$.
Then the following are equivalent:
\begin{enumerate}
\item $((\mathcal{X},\mathcal{f},\mathcal{L}),[\mathcal{Z}])$ is stable,
\item $T_{\mathcal{f},[\mathcal{Z}]}=0$ as a closed positive $(1,1)$-current on $\Lambda$,
\item there exists $C>0$ such that for all $n\geq1$, 
\[C^{-1}\frac{d^{n(k-p)}}{\deg(f^n|_Z)}\leq \deg_\mathcal{M}(\mathcal{Z}_n)\leq C\frac{d^{n(k-p)}}{\deg(f^n|_Z)},\]
\item $\widehat{h}_{f}(Z)=0$.
\end{enumerate}
\end{theorem}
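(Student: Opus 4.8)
The plan is to close the logical loop $(1)\Leftrightarrow(2)\Leftrightarrow(3)\Leftrightarrow(4)$, using the machinery already established. The equivalence $(1)\Leftrightarrow(2)$ is essentially free: it is Proposition~\ref{lm:bifmeasure} (with $p$ the codimension of $\mathcal{Z}$), which says that stability of the pair $((\mathcal{X}_\Lambda,\mathcal{f},\mathcal{L}),[\mathcal{Z}])$ is equivalent to the vanishing of the bifurcation current $T_{\mathcal{f},[\mathcal{Z}]}=\pi_*(\widehat{T}_\mathcal{f}^{\,\ell+1}\wedge[\mathcal{Z}])$ on $\Lambda$. Similarly, $(2)\Leftrightarrow(4)$ is immediate from Theorem~\ref{tm:formulaheight}: since $\widehat{h}_f(Z)=\int_{\mathcal{X}_\Lambda}\widehat{T}_\mathcal{f}^{\,\ell+1}\wedge[\mathcal{Z}]\wedge(\pi^*\omega_\mathcal{B})^{\dim\mathcal{B}-1}$ and the integrand is a positive measure, this integral vanishes iff $T_{\mathcal{f},[\mathcal{Z}]}\wedge\omega_\mathcal{B}^{\dim\mathcal{B}-1}=0$; and since $\omega_\mathcal{B}$ is Kähler (hence strictly positive) on $\Lambda$, this forces $T_{\mathcal{f},[\mathcal{Z}]}=0$ on $\Lambda$. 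So the real content is the equivalence of the degree condition $(3)$ with the others.

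For $(3)$, I would first translate $\deg_\mathcal{M}(\mathcal{Z}_n)$ into an integral. Writing $\mathcal{Z}_n$ for the Zariski closure of $\mathcal{f}^n(\mathcal{Z}_\Lambda)$, one has, via the projection formula for the generically finite map $\mathcal{f}^n|_{\mathcal{Z}_\Lambda}$ of degree $\deg(f^n|_Z)$, that $\deg(f^n|_Z)\cdot[\mathcal{Z}_n]=(\mathcal{f}^n)_*[\mathcal{Z}]$ as currents on $\mathcal{X}_\Lambda$ (the pushforward absorbs the multiplicity). Hence
\[
\deg(f^n|_Z)\cdot\deg_\mathcal{M}(\mathcal{Z}_n)=\|(\mathcal{f}^n)_*[\mathcal{Z}]\|=\int_{\mathcal{X}_\Lambda}(\mathcal{f}^n)_*[\mathcal{Z}]\wedge(\widehat{\omega}+\pi^*\omega_\mathcal{B})^{\ell+\dim\mathcal{B}}.
\]
Now expand the power of $\widehat{\omega}+\pi^*\omega_\mathcal{B}$ by the binomial formula, discard all terms with more than $\dim\mathcal{B}$ copies of $\pi^*\omega_\mathcal{B}$ (they vanish since $\omega_\mathcal{B}^{\dim\mathcal{B}+1}=0$), and apply item (2) of Proposition~\ref{Stable=bounded} to each surviving term: the dominant term is $d^{n(\ell+1)}\int_{\mathcal{X}_\Lambda}\widehat{T}_\mathcal{f}^{\,\ell+1}\wedge[\mathcal{Z}]\wedge(\pi^*\omega_\mathcal{B})^{\dim\mathcal{B}-1}=d^{n(\ell+1)}\widehat{h}_f(Z)$ (up to a binomial constant), while the error terms are controlled by sums of the form $\sum_{j\le q-1}d^{nj}\|\widehat{T}_\mathcal{f}^{\,j}\wedge[\mathcal{Z}]\wedge(\mathcal{f}^n)^*(\widehat{\omega})^{q-j-1}\wedge(\pi^*\omega_\mathcal{B})^{\ell}\|$, which by item (1) of the same Proposition are $O(d^{n\ell})$. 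Thus $\deg(f^n|_Z)\cdot\deg_\mathcal{M}(\mathcal{Z}_n)=c\,d^{n(\ell+1)}\widehat{h}_f(Z)+O(d^{n\ell})$ for a positive constant $c$. If $\widehat{h}_f(Z)=0$ one gets the upper bound $\lesssim d^{n\ell}$ of $(3)$; the matching lower bound requires a separate argument, namely that $(\mathcal{f}^n)_*[\mathcal{Z}]$ always has mass at least $c'd^{n\ell}$ against $\widehat\omega^{\ell}\wedge(\pi^*\omega_\mathcal{B})^{\dim\mathcal{B}}$ (intuitively: the fiberwise degree $\deg(f_\lambda^n|_{Z_\lambda})$ of the image contributes, and the relative volume cannot collapse because $f_\lambda^n$ is polarized of degree $d^n$ on the $\ell$-dimensional $Z_\lambda$). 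This gives $(4)\Rightarrow(3)$. Conversely, if $\widehat{h}_f(Z)\ne0$ the displayed asymptotic forces $\deg(f^n|_Z)\cdot\deg_\mathcal{M}(\mathcal{Z}_n)\sim c\,d^{n(\ell+1)}\widehat h_f(Z)$, which grows strictly faster than $d^{n\ell}$, contradicting the upper bound in $(3)$; hence $(3)\Rightarrow(4)$.

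The main obstacle I anticipate is the \emph{lower} bound in $(3)$ when $\widehat h_f(Z)=0$, i.e.\ ruling out a pathological collapse $\deg(f^n|_Z)\cdot\deg_\mathcal{M}(\mathcal{Z}_n)=o(d^{n\ell})$. The clean way around this is to bound things fiberwise: for a generic $\lambda\in\Lambda$, $f_\lambda^n|_{Z_\lambda}\colon Z_\lambda\to (\mathcal{Z}_n)_\lambda$ is generically finite of degree $\deg(f^n|_Z)$, and since $(Z_\lambda, f_\lambda|_{Z_\lambda}, \mathcal{L}|_{Z_\lambda})$ is itself (an iterate of) a polarized endomorphism of degree $d$, one has $\deg_{L_\lambda}((f_\lambda^n)_*(Z_\lambda))=d^{n\ell}\deg_{L_\lambda}(Z_\lambda)$ by the projection formula $(f_\lambda^n)^*L_\lambda\simeq L_\lambda^{\otimes d^n}$; combined with the fiberwise identity $\deg(f^n|_Z)\cdot\deg_{L_\lambda}((\mathcal{Z}_n)_\lambda)=\deg_{L_\lambda}((f_\lambda^n)_*(Z_\lambda))$ this pins down $\deg(f^n|_Z)\cdot\deg_{L_\lambda}((\mathcal{Z}_n)_\lambda)=d^{n\ell}\deg_{L_\lambda}(Z_\lambda)$ \emph{exactly}, and then a lower semicontinuity / integration-over-$\mathcal{B}$ argument transfers this to $\deg_\mathcal{M}(\mathcal{Z}_n)$ up to a uniform constant. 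This fiberwise computation is really the heart of the matter; everything else is assembling Propositions~\ref{lm:bifmeasure}, \ref{Stable=bounded} and Theorem~\ref{tm:formulaheight}.
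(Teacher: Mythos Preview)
Your proposal is correct and follows essentially the same route as the paper: the equivalences $(1)\Leftrightarrow(2)$ and $(2)\Leftrightarrow(4)$ are disposed of exactly as you say, via Proposition~\ref{lm:bifmeasure} and Theorem~\ref{tm:formulaheight}, and the core work on $(3)$ goes through the pushforward identity $(\mathcal f^n)_*[\mathcal Z]=\deg(f^n|_Z)\cdot[\mathcal Z_n]$, the binomial expansion of $(\widehat\omega+\pi^*\omega_\mathcal{B})^{\ell+\dim\mathcal B}$, and Proposition~\ref{Stable=bounded}.

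Two small differences are worth noting. First, your claim that the error terms are $O(d^{n\ell})$ \emph{unconditionally} is a bit quick: for the direction $(2)\Rightarrow(3)$ the paper explicitly uses the vanishing $\widehat T_\mathcal{f}^{\ell+1}\wedge[\mathcal Z]=0$ (which follows from $(2)$) together with an induction on item~(1) of Proposition~\ref{Stable=bounded} to force each term $\|(\mathcal f^n)^*(\widehat\omega^{\ell+j-s})\wedge\widehat T_\mathcal{f}^s\wedge[\mathcal Z]\wedge(\pi^*\omega_\mathcal B)^{\dim\mathcal B-j}\|$ down to $O(d^{n(\ell-s)})$; without that vanishing, the $j\geq2$ pieces of the binomial expansion are not obviously subordinate to the $j=1$ piece. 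For $(3)\Rightarrow(2)$ this does not matter, since one only needs to lower-bound $\deg_\mathcal M(\mathcal Z_n)$ by the single $j=1$ term, exactly as the paper does. Second, for the lower bound in $(3)$ the paper argues via positivity of $\int_{\mathcal X_\Lambda}\widehat T_\mathcal{f}^{\ell}\wedge[\mathcal Z]\wedge(\pi^*\omega_\mathcal B)^{\dim\mathcal B}$, obtained by observing that the slice $T_{f_\lambda}^\ell\wedge[Z_\lambda]$ has mass $\deg(Z_\lambda)$ (B\'ezout). Your direct fiberwise degree identity $\deg(f^n|_Z)\cdot\deg_{L_\lambda}((\mathcal Z_n)_\lambda)=d^{n\ell}\deg_{L_\lambda}(Z_\lambda)$ is the same computation phrased cohomologically rather than in terms of the Green current, and it gives the bound just as well.
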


\begin{proof}
The equivalence between points 1 and 2 is the content of Proposition~\ref{lm:bifmeasure} and the equivalence between 2 and 4 is an immediate consequence of Theorem~\ref{tm:formulaheight}. We thus just need to prove the equivalence between 2 and 3.

\medskip

\noindent{\bf A formula for the degree $\deg_\mathcal{M}(\mathcal{Z}_n)$}. Let $\Lambda$ be a regular part over which  $\mathcal{Z}$ is flat. We  recall that, as algebraic cycles, for any $\lambda\in \Lambda$, we have
\[(f_\lambda^n)_*Z_\lambda=\deg(f_\lambda^n|_{Z_\lambda})\cdot f_\lambda^n(Z_\lambda)=\deg(f^n|_{Z})\cdot f_\lambda^n(Z_\lambda).\]
 We infer that, as currents on $\mathcal{X}_\Lambda$, we also have
\[(\mathcal{f}^n)_*[\mathcal{Z}]=\deg(f^n|_{Z})\cdot[\mathcal{f}^n(\mathcal{Z})].\]
As above, let $\ell:=k-p$ be the dimension of $Z$ and $\alpha_n:=d^{n\ell}/\deg(f^n|_Z)$. As $\check{\omega}_\mathcal{B}^{\dim \mathcal{B} +1}=0$,
\begin{align}
\deg_\mathcal{M}(\mathcal{Z}_n) & =\frac{1}{\deg(f^n|_Z)}\int_{\mathcal{X}_{\Lambda}}(\mathcal{f}^n)_*[\mathcal{Z}]\wedge \left(\widehat{\omega}+\check{\omega}_\mathcal{B}\right)^{\dim\mathcal{B}+\ell}\nonumber\\
& = \frac{1}{\deg(f^n|_Z)}\sum_{j=0}^{\dim\mathcal{B}}\binom{\dim\mathcal{B}+\ell}{ j+\ell}\int_{\mathcal{X}_{\Lambda}}(\mathcal{f}^n)_*[\mathcal{Z}]\wedge \widehat{\omega}^{j+\ell}\wedge\check{\omega}_\mathcal{B}^{\dim\mathcal{B}-j}\label{expand}
\end{align}

\noindent {\bf Proof of $3 \implies 2$}. 
For $j=1$ in the above sum, we have the term 
\[ \binom{\dim\mathcal{B}+\ell}{ 1+\ell}\frac{1}{\deg(f^n|_Z)}\int_{\mathcal{X}_{\Lambda}}(\mathcal{f}^n)_*[\mathcal{Z}]\wedge \widehat{\omega}^{\ell+1}\wedge\check{\omega}_\mathcal{B}^{\dim\mathcal{B}-1}. \]
The second point of Proposition~\ref{Stable=bounded} applied to $S=[\mathcal{Z}]$ and $q=\ell+1$ gives 
\[\int_{\mathcal{X}_{\Lambda}}(\mathcal{f}^n)_*[\mathcal{Z}]\wedge \widehat{\omega}^{\ell+1}\wedge\check{\omega}_\mathcal{B}^{\dim\mathcal{B}-1}- d^{n(\ell+1)} \int_{\mathcal{X}_{\Lambda}}\widehat{T}_\mathcal{f}^{\ell+1}\wedge[\mathcal{Z}]\wedge\check{\omega}_\mathcal{B}^{\dim\mathcal{B}-1}=O(d^{n\ell}).\]
Assume 2 does not hold. Then $T_{\mathcal{f},[\mathcal{Z}]}$ is non-zero, so
\[\deg_\mathcal{M}(\mathcal{Z}_n)\geq  d^n\alpha_n\int_{\mathcal{X}_{\Lambda}}\widehat{T}_\mathcal{f}^{\ell+1}\wedge[\mathcal{Z}]\wedge\check{\omega}_\mathcal{B}^{\dim\mathcal{B}-1}+O(\alpha_n),\]
so there is no $C>0$ such that $\deg_\mathcal{M}(\mathcal{f}^n(\mathcal{Z})) \leq C \alpha_n$. Hence 3 implies 2.

\medskip

\noindent {\bf Proof of $2 \implies 3$}. 
 Finally assume $T_{\mathcal{f},[\mathcal{Z}]} =0$ and we first want to prove $\deg_\mathcal{M}(\mathcal{Z}_n)=O(\alpha_n)$. Since $\mathcal{Z}$ is flat over a dense Zariski open set and since $\widehat{T}_f$ has continuous potentials, this implies $\widehat{T}_\mathcal{f}^{\ell+1}\wedge [\mathcal{Z}]=0$. 
 We first show $\int_{\mathcal{X}_{\Lambda}}(\mathcal{f}^n)_*[\mathcal{Z}]\wedge \widehat{\omega}^{j+\ell}\wedge\check{\omega}_\mathcal{B}^{\dim\mathcal{B}-j}=O(d^{n\ell})$ for all $1\leq j\leq \dim \mathcal{B}-1$. We use again the second point of Proposition~\ref{Stable=bounded} for $S=[\mathcal{Z}]$ and $q=\ell+j$ so  $\int_{\mathcal{X}_{\Lambda}}\widehat{T}_\mathcal{f}^{\ell+j}\wedge[\mathcal{Z}]\wedge\check{\omega}_\mathcal{B}^{\dim\mathcal{B}-j}=0$: 
\begin{align*}
	\int_{\mathcal{X}_{\Lambda}}(\mathcal{f}^n)_*[\mathcal{Z}]\wedge \widehat{\omega}^{j+\ell}\wedge\check{\omega}_\mathcal{B}^{\dim\mathcal{B}-j}&\leq C\sum\limits_{s<\ell + j} d^{ns}\|[\mathcal{Z}]\wedge\widehat{T}_\mathcal{f}^{s}\wedge  (\mathcal{f}^n)^*(\widehat{\omega})^{\ell + j-s-1}\| \\
	&\leq C\sum\limits_{s<\ell +1 } d^{ns}\|[\mathcal{Z}]\wedge\widehat{T}_\mathcal{f}^{s}\wedge  (\mathcal{f}^n)^*(\widehat{\omega})^{\ell + j-s-1}.\| 
	\end{align*}
 By induction on $j$, we show, for all $1\leq j\leq\dim\mathcal{B}$ and every $0\leq s \leq \ell +1$,
\[\|[\mathcal{Z}]\wedge (\mathcal{f}^n)^*(\widehat{\omega}^{\ell+j-s-1})\wedge\widehat{T}_\mathcal{f}^s\|=O(d^{n(\ell-s)}).\]
For $j=1$, it is immediate from the action of $f^*$ on cohomology. Pick $j$ and assume this holds for $j-1$ and all $s$, then we prove the estimate for $j$ by a descending induction on $s$. Indeed, it holds for $s=\ell +1$ since $\widehat{T}_\mathcal{f}^{\ell+1}\wedge [\mathcal{Z}]=0$. Assume it holds for $s+1 \leq \ell +1$. Point 1 of Proposition~\ref{Stable=bounded} then gives
\[\|[\mathcal{Z}]\wedge (\mathcal{f}^n)^*(\widehat{\omega}^{\ell+j-s-1})\wedge\widehat{T}_\mathcal{f}^s\|-d^n\|[\mathcal{Z}]\wedge (\mathcal{f}^n)^*(\widehat{\omega}^{\ell+j-s-2})\wedge\widehat{T}_\mathcal{f}^{s+1}\|=O(\| [\mathcal{Z}]\wedge (\mathcal{f}^n)^*(\widehat{\omega}^{\ell+j-s-2})\wedge\widehat{T}_\mathcal{f}^s\|), \]
so, by the induction hypothesis for $j-1$: 
\[\|[\mathcal{Z}]\wedge (\mathcal{f}^n)^*(\widehat{\omega}^{\ell+j-s-1})\wedge\widehat{T}_\mathcal{f}^s\|-d^n\|[\mathcal{Z}]\wedge (\mathcal{f}^n)^*(\widehat{\omega}^{\ell+j-s-2})\wedge\widehat{T}_\mathcal{f}^{s+1}\|=O(d^{n(\ell -s)}), \]
and by the induction hypothesis for $s+1$ (and $j$)
\[\|[\mathcal{Z}]\wedge (\mathcal{f}^n)^*(\widehat{\omega}^{\ell+j-s-1})\wedge\widehat{T}_\mathcal{f}^s\|= d^n O(d^{n(\ell -(s+1))}) +O(d^{n(\ell -s-1)}), \]
which gives the estimate. 

In particular, we have
\begin{align}\label{upperbound_1stpart}
	\int_{\mathcal{X}_{\Lambda}}(\mathcal{f}^n)_*[\mathcal{Z}]\wedge \widehat{\omega}^{j+\ell}\wedge\check{\omega}_\mathcal{B}^{\dim\mathcal{B}-j}&\leq C\sum\limits_{s<\ell +1 } d^{ns}O(d^{n(\ell -s)})=  O(d^{n(\ell-s)}).
\end{align}

Finally, for $j=0$, we have
\begin{align} \label{upperbound_2ndpart}
 \int_{\mathcal{X}_{\Lambda}}(f^n)_*[\mathcal{Z}]\wedge\widehat{\omega}^\ell\wedge \check{\omega}_\mathcal{B}^{\dim\mathcal{B}}
 &=\int_{\mathcal{X}_{\Lambda}}[\mathcal{Z}]\wedge(\mathcal{f}^n)^*(\widehat{\omega}^\ell)\wedge\check{\omega}_\mathcal{B}^{\dim\mathcal{B}} \nonumber \\
 & \leq C_5d^{n\ell}\|[\mathcal{Z}]\|,
\end{align}
for some $C_5>0$, since $\pi\circ \mathcal{f}^n=\pi$. 
Using both \eqref{upperbound_1stpart} and \eqref{upperbound_2ndpart} in \eqref{expand} gives the right hand side inequality in point 3 of the proposition.

 To get the other inequality, take some non empty open set $U \Subset \Lambda$, then we always have
\[\int_{\pi^{-1}(U)}\widehat{T}_\mathcal{f}^\ell\wedge[\mathcal{Z}]\wedge\check{\omega}_\mathcal{B}^{\dim\mathcal{B}}>0.\]
Indeed, for $\lambda\in U$, the slice along $X_\lambda$ of $\widehat{T}_\mathcal{f}^\ell\wedge[\mathcal{Z}]$ is $T_{f_\lambda}^\ell\wedge[Z_\lambda]$ which has mass $\deg(Z_\lambda)$ by Bézout, and 
\begin{align*}
	\int_{\pi^{-1}(U)}\widehat{T}_\mathcal{f}^\ell\wedge[\mathcal{Z}]\wedge\check{\omega}_\mathcal{B}^{\dim\mathcal{B}}=\int_U\pi_*\left(\widehat{T}_\mathcal{f}^\ell\wedge[\mathcal{Z}]\right)\cdot\omega_\mathcal{B}^{\dim\mathcal{B}},
\end{align*}
so the global mass of $\int_{\pi^{-1}(U)}\widehat{T}_\mathcal{f}^\ell\wedge[\mathcal{Z}]\wedge\check{\omega}_\mathcal{B}^{\dim\mathcal{B}}$ has to be positive.

Taking $j=0$ in \eqref{expand}, we have
\begin{align*}
\deg_\mathcal{M}(\mathcal{Z}_n)  \geq  \frac{1}{\deg(f^n|_Z)}\int_{\mathcal{X}_{\Lambda}}(\mathcal{f}^n)_*[\mathcal{Z}]\wedge \widehat{\omega}^{\ell}\wedge\check{\omega}_\mathcal{B}^{\dim\mathcal{B}}= \frac{1}{\deg(f^n|_Z)}\int_{\mathcal{X}_{\Lambda}}[\mathcal{Z}]\wedge (\mathcal{f}^n)^*(\widehat{\omega}^{\ell})\wedge\check{\omega}_\mathcal{B}^{\dim\mathcal{B}}. 
\end{align*}
By the convergence of $d^{-n}(\mathcal{f}^n)^*(\widehat{\omega})$ towards $\widehat{T}_\mathcal{f}$ with locally uniform convergence of the potentials, for $n$ large enough, we deduce, taking some non empty open set $U \Subset \Lambda$
\begin{align*}
	\deg_\mathcal{M}(\mathcal{Z}_n)  \geq  \frac{d^{n \ell}}{C_5\deg(f^n|_Z)}\int_{\pi^{-1}(U)}[\mathcal{Z}]\wedge \widehat{T}_\mathcal{f}^{\ell}\wedge\check{\omega}_\mathcal{B}^{\dim\mathcal{B}}. 
	\end{align*}
\end{proof}

\begin{remark}\normalfont
When $\dim Z>0$, the degree $\deg(f^n|_Z)$ could as well grow as $d^{\ell n}$ or remain constant equal to $1$. Consider, for example, a family of polarized endomorphisms of $\p^k$ given by polynomial endomorphisms of $\C^k$, then the hyperplane at infinity $\mathcal{H}_\infty$ defines an irreducible  algebraic subvariety of $\mathcal{X}$ of codimension $1$, it is fixed hence its height is zero and $\deg(\mathcal{f}^n|\mathcal{H}_\infty)= d^{n(k-1)}$.
More generally, we expect to have $\deg_\mathcal{M}(\mathcal{Z}_n)=O(1)$, when the pair $((\mathcal{X},\mathcal{f},\mathcal{L}),[\mathcal{Z}])$ is stable unless we have some isotriviality.  
\end{remark}

\subsection{A criterion for unstability: similarity}

The following is an adaptation of Tan Lei's similarity between the Julia set and the Mandelbrot set at a Misiurewicz parameter~\cite{Tan-similarity} and follows the idea of the proof of \cite[Theorem B]{AGMV}. Recall that a repelling periodic point $z_0$ of  $f_{\lambda_0}$ can be followed analytically as a repelling periodic point in a neighborhood of $\lambda_0$ by the Implicit Function Theorem.

\begin{lemma}\label{lm:preperstable}  Let $((\mathcal{X},\mathcal{f},\mathcal{L}),[\mathcal{Z}])$ be an algebraic $k$-measurable dynamical pair with regular part $\Lambda$. 
 Pick $\lambda_0\in\Lambda$. Assume $z_0\in (X_{\lambda_0})_{\mathrm{reg}}\cap \mathcal{Z}$ is a repelling periodic point for $f_{\lambda_0}$. Let $z:\lambda\in(\Lambda,\lambda_0)\mapsto z(\lambda)\in(\mathcal{X},z_0)$ be the local analytic continuation of $z_0$ as a repelling periodic point of $f_\lambda$. Then we have the alternative
	\begin{enumerate}
		\item either $\lambda_0\in \mathrm{supp}(T_{\mathcal{f},[\mathcal{Z}]})$, in particular $((\mathcal{X},\mathcal{f},\mathcal{L}),[\mathcal{Z}])$ is not stable,
		\item or there is a neighborhood $U$ of $\lambda_0$ in $\Lambda$ such that the graph $\Gamma_z$ satisfies $\Gamma_z\subset \mathcal{Z}\cap \pi^{-1}(U)$.
	\end{enumerate}
\end{lemma}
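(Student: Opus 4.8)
The plan is to run a Tan Lei--type similarity argument. Since $z_0$ is $J$-repelling for $f_{\lambda_0}$, after replacing $\mathcal{f}$ by an iterate we may assume $z_0$ is a fixed point, and there is a neighborhood $U_0$ of $\lambda_0$ on which the local analytic continuation $z(\lambda)$ is a well-defined $J$-repelling fixed point of $f_\lambda$ with $z(\lambda)\in (X_\lambda)_{\mathrm{reg}}$; let $\Gamma_z$ denote its graph over $U_0$. Fix a local section $\sigma$ of $\pi$ through a point of $\mathcal{Z}\cap X_{\lambda_0}$ near $z_0$, or more precisely work directly with $\mathcal{Z}$. First I would set up holomorphic linearizing coordinates: since $Df_{\lambda}(z(\lambda))$ has all eigenvalues of modulus $>1$ and depends holomorphically on $\lambda$, one can (after a finite base change and shrinking $U_0$) choose holomorphic coordinates in which $f_\lambda$ is conjugate, on a fixed polydisc $V$ around $z(\lambda)$, to a map $L_\lambda + O(2)$ with $L_\lambda$ linear expanding; the key point is that the conjugating charts $\psi_\lambda\colon (X_\lambda,z(\lambda))\to (\C^k,0)$ depend holomorphically on $\lambda$ and are defined on a uniform neighborhood.

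Next, consider the horizontal current $[\mathcal{Z}]$ and its iterates. The dichotomy should come from analyzing the local intersection $\widehat{T}_\mathcal{f}^{\,p}\wedge[\mathcal{Z}]$ near $\Gamma_z$ and the behavior of $(\mathcal{f}^n)_*[\mathcal{Z}]$. Suppose we are \emph{not} in case (2), i.e. for every neighborhood $U$ of $\lambda_0$ the graph $\Gamma_z$ is not contained in $\mathcal{Z}\cap\pi^{-1}(U)$. Then I claim we can produce, at parameters arbitrarily close to $\lambda_0$, a failure of equicontinuity of $\{\lambda\mapsto \mathcal{f}^n|_{\mathcal{Z}}\}$ that forces mass of $T_{\mathcal{f},[\mathcal{Z}]}=\pi_*(\widehat{T}_\mathcal{f}^{\,k+1-p}\wedge[\mathcal{Z}])$ near $\lambda_0$. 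The mechanism: because $z_0$ is $(J,p)$-repelling, $\int_U T_{f_{\lambda_0}}^{\,p}\wedge[Z_{\lambda_0}]>0$ for $U\ni z_0$, so $\mathcal{Z}$ genuinely meets the Julia set at $z_0$ with positive $T^p$-mass; in the linearizing charts the dynamics is a fixed linear expansion, and the rescaled pieces $\psi_\lambda\circ f_\lambda^n(\mathcal{Z}\cap V)$ converge, after the standard Tan Lei renormalization in the $\lambda$-variable as well, to a nontrivial limit object that varies with $\lambda$ precisely when $\Gamma_z\not\subset\mathcal{Z}$ locally. This non-constancy in $\lambda$ of the limit is exactly what prevents $\|(\mathcal{f}^n)_*[\mathcal{Z}]\wedge(\pi^*\omega_\Lambda)^{\dim\Lambda-1}\|_K=o(d^{n(k+1-p)})$ on a neighborhood $K$ of $\lambda_0$, hence by Proposition~\ref{lm:bifmeasure} gives $\lambda_0\in\mathrm{supp}(T_{\mathcal{f},[\mathcal{Z}]})$ and instability.

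Concretely, the argument I would write goes: assume $\lambda_0\notin\mathrm{supp}(T_{\mathcal{f},[\mathcal{Z}]})$; then on a neighborhood $U$ of $\lambda_0$ the pair is stable, so by Bishop's theorem (as in the Remark after Proposition~\ref{lm:bifmeasure}) a subsequence of $(\mathcal{f}^n)_*[\mathcal{Z}]$ converges locally to a horizontal analytic cycle, and in particular the local branches of $\mathcal{f}^n(\mathcal{Z})$ through points near $z(\lambda)$ form a normal family over $U$. Applying $\psi_\lambda$ and using that $f_\lambda$ is linearly expanding there, a normal-family + expansion argument (this is the heart of Tan Lei's similarity, cf. \cite{Tan-similarity, AGMV}) forces the limit to be $\Gamma_z$-invariant and then forces $\Gamma_z\subset\mathcal{Z}$ over a possibly smaller neighborhood $U'$ of $\lambda_0$ — giving case (2). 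Contrapositively, if (2) fails then $\lambda_0\in\mathrm{supp}(T_{\mathcal{f},[\mathcal{Z}]})$, which is case (1), and the final assertion that instability follows is Proposition~\ref{lm:bifmeasure}.

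The main obstacle is the step where one passes from ``$(\mathcal{f}^n(\mathcal{Z}))$ is a normal family near $\Gamma_z$'' to ``$\Gamma_z\subset\mathcal{Z}$'': one must rule out the possibility that the normal limit is a branch through $z(\lambda)$ transverse to (or otherwise different from) $\mathcal{Z}$, using the $(J,p)$-repelling hypothesis to guarantee that $\mathcal{Z}$ carries positive $T^p$-mass exactly at $z_0$ so that the iterated branches actually accumulate on $z(\lambda)$ rather than escaping. Handling the $k$-dimensional (non-curve) case of the linearization and the slicing of $\widehat{T}_\mathcal{f}^{\,p}\wedge[\mathcal{Z}]$ uniformly in $\lambda$ — so that the fiberwise positivity $\int_U T_{f_{\lambda_0}}^p\wedge[Z_{\lambda_0}]>0$ propagates to nearby $\lambda$ and to the global mass estimate — is the technical crux; this is presumably where the DSH/continuity-of-potentials machinery from Section~\ref{section3} and the estimates of Proposition~\ref{Stable=bounded} are invoked.
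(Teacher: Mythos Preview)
Your overall plan --- assume stability near $\lambda_0$, exploit the repelling dynamics at $z_0$, and derive a contradiction via a Tan Lei--type blow-up --- is the right shape, and matches the paper's strategy in spirit. But the execution you outline is considerably more elaborate than what is needed, and the place you flag as the ``main obstacle'' dissolves once you adopt the paper's more direct argument.

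The paper does \emph{not} linearize, does not invoke Bishop or normal families of analytic sets, and does not use the DSH machinery or Proposition~\ref{Stable=bounded} here. Instead, after reducing to the fixed case and (crucially) to $\dim\mathcal{B}=1$ by slicing $T_{\mathcal{f},[\mathcal{Z}]}$ with a curve through $\lambda_0$, it argues as follows. If $\Gamma_z\not\subset\mathcal{Z}$ locally, then $\mathcal{Z}\cap\Gamma_z$ is isolated at $(z_0,\lambda_0)$. Choose a small $\Omega\ni(z_0,\lambda_0)$ on which $\mathcal{f}:\Omega\to\mathcal{f}(\Omega)\supset\Omega$ is proper (this only uses that $z(\lambda)$ is uniformly repelling nearby). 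The invariance $\mathcal{f}^*\widehat{T}_\mathcal{f}=d\,\widehat{T}_\mathcal{f}$ gives the one-line inequality
\[
d^{n(k+1-p)}\int_\Omega \widehat{T}_\mathcal{f}^{\,k+1-p}\wedge[\mathcal{Z}]
=\int_\Omega(\mathcal{f}^n)^*\widehat{T}_\mathcal{f}^{\,k+1-p}\wedge[\mathcal{Z}]
\ge \int_\Omega \widehat{T}_\mathcal{f}^{\,k+1-p}\wedge(\mathcal{f}^n)_*[\mathcal{Z}].
\]
Because the intersection is isolated and $\mathcal{f}$ expands on $\Omega$, $\liminf_n \mathbf{1}_\Omega(\mathcal{f}^n)_*[\mathcal{Z}]\ge \alpha[Z_0]$ for some analytic $Z_0$ of codimension $p$ through $z_0$ and some $\alpha>0$. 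The $(J,p)$-repelling hypothesis then says precisely that $\int_\Omega\widehat{T}_\mathcal{f}^{\,k+1-p}\wedge[Z_0]>0$, so the right-hand side is eventually positive. Hence $\int_\Omega\widehat{T}_\mathcal{f}^{\,k+1-p}\wedge[\mathcal{Z}]>0$; but with $\dim\mathcal{B}=1$ this is a positive measure whose pushforward by $\pi$ is $T_{\mathcal{f},[\mathcal{Z}]}$, contradicting stability near $\lambda_0$.

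Two concrete remarks on your write-up. First, you omit the reduction to $\dim\mathcal{B}=1$; without it, $\widehat{T}_\mathcal{f}^{\,k+1-p}\wedge[\mathcal{Z}]$ is not top-degree and the passage from ``$\pi_*(\cdot)=0$'' to ``$(\cdot)=0$ on $\Omega$'' requires the extra $(\pi^*\omega_\Lambda)^{\dim\Lambda-1}$ factors you would then have to thread through every step. Second, your invocation of Proposition~\ref{Stable=bounded} and the DSH cut-off is a red herring for this lemma: those tools are used to control mass at the \emph{boundary} $\mathcal{B}\setminus\Lambda$, whereas here everything is local in $\Lambda$ near $\lambda_0$. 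The linearizing charts you propose would work, but they only serve to prove the single fact $\liminf_n\mathbf{1}_\Omega(\mathcal{f}^n)_*[\mathcal{Z}]\ge\alpha[Z_0]$, for which the properness of $\mathcal{f}|_\Omega$ and isolatedness of $\mathcal{Z}\cap\Gamma_z$ already suffice.
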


\begin{proof}
	Observe that if point $2$ occurs, then, as $\mathcal{Z}$ is irreducible and intersect persistently some $\mathrm{Per}_\mathcal{f}(n,0)$, it has to be globally contained in $\mathrm{Per}_\mathcal{f}(n,0)$ so it is stable by Corollary~\ref{cor:stable-h0}.
	
	Up to replacing $\mathcal{f}$ by an iterate, we can assume that $z_0$ is fixed. We assume the graph $\Gamma_z$ of the map $z$ satisfies $\Gamma_z\not\subset\mathcal{Z}$. First, as $T_{\mathcal{f},[\mathcal{Z}]}$ is a closed positive current on $\Lambda$ with continuous potentials, we can reduce to the case where $\dim\mathcal{B}=1$. Indeed, for any curve $\mathcal{C}\subset\mathcal{B}$, $\supp\left(T_{\mathcal{f},[\mathcal{Z}]}\wedge[\mathcal{C}]\right) \subset \supp(T_{\mathcal{f},[\mathcal{Z}]})$, see, e.g., \cite[Lemma~6.3]{Article1}.
	
	\medskip
	
	We thus assume that $\dim\mathcal{B}=1$, and that there exists an open neighborhood $U$ of $\lambda_0$ in $\mathcal{B}$ such that the intersection $\mathcal{Z}\cap \Gamma_z\cap\pi^{-1}(U) =\{z_0\}$, up to reducing $U$. We want to prove that $T_{\mathcal{f},[\mathcal{Z}]}\neq0$ on any neighborhood of $\lambda_0$.
	As $z(\lambda)$ is repelling for all $\lambda\in U$, up to reducing $U$, we can assume there exist $K>1$ and $\delta>0$ such that 
	\[d_{X_\lambda}(f_\lambda(x),f_\lambda(z(\lambda))) \geq Kd_{X_\lambda}(x,z(\lambda)),\]
	for all $\lambda\in  U$ and all $x\in \B_{X_\lambda}(z(\lambda),\delta)\subset X_\lambda$.
	In particular, there exists a neighborhood $\Omega \Subset \B(z_0,\varepsilon)$ of $z_0$ in $\mathcal{X}$ such that the analytic map $\mathcal{f}:\Omega\to \mathcal{f}(\Omega)$ is proper with $\Omega\subset \mathcal{f}(\Omega)$ and $\pi(\Omega)\subset U$. In addition, if $\lambda\in\pi(\Omega)$, we have $\Omega\cap X_\lambda\Subset \mathcal{f}(\Omega\cap X_\lambda)$.
	Then
	\begin{align*}
	d^{n}\int_{\Omega}\widehat{T}_\mathcal{f}\wedge [\mathcal{Z}]& =\int_{\Omega}(\mathcal{f}^{n})^*\widehat{T}_\mathcal{f}\wedge [\mathcal{Z}]=\int(\mathcal{f}^{n})^*\widehat{T}_\mathcal{f}\wedge \left(\mathbf{1}_{\Omega}\cdot [\mathcal{Z}]\right)\\
	& = \int\widehat{T}_\mathcal{f}\wedge (\mathcal{f}^{n})_*\left(\mathbf{1}_{\Omega}\cdot [\mathcal{Z}]\right)\geq\int\widehat{T}_\mathcal{f}\wedge \mathbf{1}_{\Omega}\cdot (\mathcal{f}^{n})_*\left([\mathcal{Z}]\right).
	\end{align*}
Let $S_n$ be the current of integration on the connected component of $ \mathcal{f}^{n}\left(\Omega\cap \mathcal{Z}\right)\cap \Omega $ containing $z_0$.
If we assume, by contradiction, that $((\mathcal{X},\mathcal{f},\mathcal{L}),[\mathcal{Z}])$ is a stable analytic pair over $\pi(\Omega)$, by Proposition~\ref{lm:bifmeasure}, the sequence $S_n$ defines a sequence of integration currents on analytic subsets of $\Omega$ containing $z_0$ and of uniformly bounded mass. By a theorem of Bishop, any weak limit of $(S_n)$ is the integration current on an analytic curve of $\Omega$ containing $z_0$. Let $Z_0$ be such a weak limit. By construction, it is supported in $X_{\lambda_0}\cap \Omega$ whence
\[\int_{\Omega}\widehat{T}_\mathcal{f}\wedge [Z_0]=\int_{X_{\lambda_0}\cap \Omega}T_{f_{\lambda_0}}\wedge [Z_0].\]
Since $z_0$ is repelling, the sequence of iterates $(f_{\lambda_0}^n|_{Z_0})_n$  cannot be equicontinuous, so $z_0$ belongs to the support of the current $T_{f_{\lambda_0}}\wedge [Z_0]$ (see the proof of Theorem~1.6.5 of \cite{Sibony} for details), hence
\[\int_{X_{\lambda_0}\cap \Omega}T_{f_{\lambda_0}}\wedge [Z_0]>0.\]
 This is is a contradiction since 
 \[ d^{n}\int_{\Omega}\widehat{T}_\mathcal{f}\wedge [\mathcal{Z}]\geq\int\widehat{T}_\mathcal{f}\wedge \mathbf{1}_{\Omega}\cdot (\mathcal{f}^{n})_*\left([\mathcal{Z}]\right)>0\]
 for $n$ large enough.  The proof is complete.
\end{proof}

\subsection{The geometric dynamical Northcott property}

 \begin{theorem}\label{tm:Pk}
 Let $(\mathcal{X},\mathcal{f},\mathcal{L})$ be a non-isotrivial algebraic family of polarized endomorphisms over a normal complex projective variety $\mathcal{B}$. Fix an integer $D\geq1$ and let $\mathcal{N}$ be a very ample line bundle on $\mathcal{B}$ such that $\mathcal{M}:=\mathcal{L}\otimes \pi^*(\mathcal{N})$ is ample on $\mathcal{X}$. 
There exist a possibly empty subvariety $\mathcal{Y}_D$ of $\mathcal{X}$ such that $\pi:\mathcal{Y}_D\to\mathcal{B}$ is surjective when $\mathcal{Y}_D\neq\varnothing$ and $N\geq0$ such that
\begin{enumerate}
\item $f(\mathcal{Y}_D)=\mathcal{Y}_D$, and for any periodic irreducible component $\mathcal{V}$ of $\mathcal{Y}_D$, of period $n$, the family of polarized endomorphisms $(\mathcal{V},\mathcal{f}^{n}|_\mathcal{V},\mathcal{L}|_\mathcal{V})$ is isotrivial,
\item for any rational section $a: \mathcal{B} \dasharrow \mathcal{X}$ and any $n$, let $\mathcal{C}_n(a)$ be the Zariski closure in $\mathcal{X}$ of the image of $\mathcal{f}^n(a)$. If we assume that $\deg_\mathcal{M}(\mathcal{C}_n(a))\leq D$ for all $n\geq1$, then $\mathcal{Y}_D$ contains $\mathcal{C}_N(a)$.
\end{enumerate}
 \end{theorem} 

 \begin{remark}\normalfont
\begin{enumerate}
 \item  In the case where any irreducible component of $\mathcal{Y}_D$ has dimension $\dim\mathcal{B}$, the variety $\mathcal{Y}_D$ is pointwise periodic, i.e.\  there exists $n>0$ such that $\mathcal{f}^n(x)=x$, for all $x\in\mathcal{Y}_D$.
\item  In fact we will see in Theorem~\ref{tm:mainPk} that we can make $\mathcal{Y}_D$ \emph{independent} of $D$.
\item While $\mathcal{Y}_D$ can be empty for small $D$, Corollary~\ref{cor:stable-h0} guarantees that, up to a base change, it is not empty for $D$ large enough. 
\end{enumerate}
\end{remark}
 
 \begin{proof}
 	First, observe that it is enough to prove the theorem in the case where $\mathcal{X}$ is normal which we assume from now on.  
 	
 		Recall that the Chow variety $\mathrm{Ch}_{\dim\mathcal{B},D}(\mathcal{X})$ is the (non reduced) variety of algebraic cycles of $\mathcal{X}$ of dimension $\dim\mathcal{B}$ and degree at most $D$, relatively to $\mathcal{M}$ (see, e.g. \cite[Chapter~I]{Kollar}).
 Let $\mathcal{D}$ denote the set of subvarieties of $\mathcal{X}$ of the form $\mathcal{C}_0(a)$ for some rational section $a: \mathcal{B} \dasharrow \mathcal{X} $. For any $n\in \N$, let 
 \[\mathcal{Z}_{n,D}:= \{\mathcal{C} \in  \mathcal{D}\cap \mathrm{Ch}_{\dim\mathcal{B},D}(\mathcal{X}), \ \forall j \leq n, \ \deg_\mathcal{M}(\mathcal{f}^j(\mathcal{C})) \leq D \}\]
 	and $\overline{\mathcal{Z}_{n,D}}$ its Zariski closure in $\mathrm{Ch}_{\dim\mathcal{B},D}(\mathcal{X})$.  Then, the decreasing intersection
 	\[\bigcap_{n = 0}^{N}  \overline{\mathcal{Z}_{n,D}} \]
 	is eventually constant by noetherianity so there exists an $n_0$ such that 
 	\[ \mathcal{Z}_D^1:=\bigcap_{n = 0}^{n_0}  \overline{\mathcal{Z}_{n,D}} =\bigcap_{n \geq 0}  \overline{\mathcal{Z}_{n,D}}=\overline{\mathcal{Z}_{n_0,D}},\]
 	is a projective variety. 	Let $W$ be an irreducible component of $\mathcal{Z}_D^1$. Then $f$ induces a rational map $W\dashrightarrow \mathrm{Ch}_{\dim\mathcal{B},dD}(\mathcal{X})$ whose image is contained in $\mathcal{Z}_D^1\subset \mathrm{Ch}_{\dim\mathcal{B},D}(\mathcal{X})$ by Zariski density of $\mathcal{Z}_{n_0,D}$ in $W$.
 Denote by $g: \mathcal{Z}_D^1 \dasharrow \mathcal{Z}_D^1$ the induced rational map. 
 By construction, for each irreducible component $W$ of $\mathcal{Z}_{n_0,D}$, $g$ is well defined - as a rational map - on a dense Zariski open set $U$ of $W$ and satisfies $g(U) \subset \mathcal{Z}_{n_0,D}$, whence for any irreducible component $V$ of $\mathcal{Z}_D^1$, the iterates of $g$ are all well-defined as rational maps on a Zariski open subset of $V$.

 In particular, under iteration of $g$, any irreducible component of $\mathcal{Z}_D^1$  is sent into a periodic irreducible component and its image is not fully contained in the indeterminacy set of $g$, since the image of $g$ is contained in $\mathcal{Z}_{n_0,D}$. Let $\mathcal{Z}_D$ be such a periodic irreducible component of $\mathcal{Z}_D^1$. Note that all the $\mathcal{V}$'s lying in a given irreducible component have to be cohomologous. As we have proved in Lemma~\ref{lm:isotriviality-iterate} that $(\mathcal{X},\mathcal{f},\mathcal{L})$ is isotrivial  if and only if $(\mathcal{X},\mathcal{f}^n,\mathcal{L})$ is for some $n\geq1$, we may assume $g(\mathcal{Z}_D)\subset\mathcal{Z}_D$. As $\mathcal{f}$ has finite fibers over any regular part $\Lambda$, $g(\mathcal{Z}_D)$ has the same dimension as $\mathcal{Z}_D$. If not, a general element $\mathcal{W}$ in  $g(\mathcal{Z}_D)$ would have infinitely many preimages $\mathcal{W}_i$ in $\mathcal{Z}_D$ and, for a general parameter $\lambda \in \Lambda$,  $\mathcal{W}_i \cap  X_\lambda \subset f_\lambda^{-1}(\mathcal{W}\cap X_\lambda)$ (which is finite since $\mathcal{W}\cap X_\lambda$ is finite), a contradiction.  As $\mathcal{Z}_D$ is irreducible, $g:\mathcal{Z}_D \dashrightarrow \mathcal{Z}_D$ is dominant.

We now define 
 \[\widehat{\mathcal{Z}}_D:=\{(\mathcal{V},x)\in \mathcal{Z}_D\times \mathcal{X}\, : \ x\in \mathcal{V}\}.\]
Assume that the canonical projection
$\Pi:(\mathcal{V},x)\longmapsto x$
 is dominant, whence $\Pi$ is surjective. We want to prove $(\mathcal{X},\mathcal{f},\mathcal{L})$ is isotrivial.
 For a given regular part $\Lambda$, we set $\widehat{\mathcal{Z}}_D^\Lambda:=\Pi^{-1}(\mathcal{X}_\Lambda)$, we have the following lemma.

\begin{lemma}\label{lm:isom}
There exists a regular part ${\Lambda}$ such that 
\begin{enumerate}
\item the map $\Pi|_{\widehat{\mathcal{Z}}_D^{\Lambda}}:\widehat{\mathcal{Z}}_D^{\Lambda}\to\mathcal{X}_{{\Lambda}}$ is an isomorphism,
\item $\dim(\mathcal{Z}_D)= k$,
\item if
$p:\widehat{\mathcal{Z}}_D^{\Lambda}\to\mathcal{Z}_D$ is the projection given by $p(\mathcal{V},z)=\mathcal{V}$ then the map 
\[\Psi:=p\circ \left(\Pi|_{\widehat{\mathcal{Z}}_D^{\Lambda}}\right)^{-1}:\mathcal{X}_{\Lambda}\longrightarrow \mathcal{Z}_D\]
is regular and its fibers are of the form $\mathcal{V}_{\Lambda}$ for some $\mathcal{V}\in\mathcal{Z}_D$,
\item $\Psi \circ \mathcal{f}=g \circ \Psi$.
\end{enumerate}
\end{lemma} 

We take the lemma for granted.  Up to normalizing $\mathcal{Z}_D$, we may assume it is normal.
Note that, by assumption,  there is $\mathcal{C}_0\in\mathcal{Z}_D$ with $\left(\mathcal{C}_0\cdot X_\lambda\right)=1$,  for all $\lambda$ in $ \Lambda$. As all fibers of $\pi$ over $\Lambda$ (resp. all varieties in $\mathcal{Z}_D$) are cohomologous and as the intersection can be computed in cohomology, we deduce that
\[\left(\mathcal{V}\cdot X_\lambda\right)=1\]
for all $\lambda\in\Lambda$ and all $\mathcal{V}\in\mathcal{Z}_D$. Let now $\psi_\lambda:=\Psi|_{X_\lambda}:X_\lambda\rightarrow \mathcal{Z}_D$. $\psi_\lambda$ is a morphism by construction. Finally, since the topological degree of $\psi_\lambda$ is exactly $\left(\mathcal{V}\cdot X_\lambda\right)$ for a general variety $\mathcal{V}\in\mathcal{Z}_D$, we get that $\psi_\lambda$ is an isomorphism which conjugates $f_\lambda$ to $g$. In particular, $g$ is an endomorphism.

\smallskip

We have proved that for, any $\lambda\in\Lambda$, the morphism $f_\lambda$ is conjugated by an isomorphism $\psi_\lambda:X_\lambda\to \mathcal{Z}_D$ to the endomorphism $g$. In particular, given a fixed $\lambda_0\in \Lambda$, the map $\Phi:=(\psi_{\lambda_0}^{-1}\circ \Psi_\Lambda,\pi):\mathcal{X}_\Lambda\to X_{\lambda_0}\times \Lambda$ is an isomorphism which satisfies $\Phi\circ f=(f_{\lambda_0},\pi)\circ \Phi$. In particular, if $\phi_\lambda:X_\lambda\to X_{\lambda_0}$ is the induced isomorphism, $(\phi_\lambda^{-1})^*L_\lambda$ depends continuously on $\lambda$, whence its class in the Picard group $\mathrm{Pic}(X_{\lambda_0})$ also. By \cite[Lemma~2.1 $\&$ Lemma~2.3 (i)]{Nakayama-Zhang}, the subset of ample line bundles in the Picard group of a complex projective variety which can polarize a given morphism is discrete and $\Lambda$ is connected. Whence $(\phi_\lambda^{-1})^*L_\lambda$ must be constant. Since $\phi_{\lambda_0}=\mathrm{id}_{X_{\lambda_0}}$, we infer $\phi_\lambda^*L_{\lambda_0}=L_\lambda$ for any $\lambda\in \Lambda$.
 We thus have proved $(\mathcal{X},\mathcal{f},\mathcal{L})$ is isotrivial.

\medskip

As a consequence, when $(\mathcal{X},\mathcal{f},\mathcal{L})$ is non-isotrivial, the image $\mathcal{Y}$ of $\Pi_{\widehat{\mathcal{Z}}_D}:\widehat{\mathcal{Z}}_D\to\mathcal{X}$ is a strict subvariety of $\mathcal{X}$ which is invariant by $\mathcal{f}$. Moreover, as $\mathcal{f}$ has finite fibers, we have $\mathcal{f}(\mathcal{Y})=\mathcal{Y}$. Let $\mathsf{n}:\widehat{\mathcal{Y}}\to\mathcal{Y}$  be the normalization of $\mathcal{Y}$. If $(\mathcal{Y},\mathcal{f}|_{\mathcal{Y}},\mathcal{L}|_{\mathcal{Y}})$ is non-isotrivial, we  end up with a contradiction applying the same strategy as above to $(\widehat{\mathcal{Y}},\widehat{\mathcal{f}}|_{\mathcal{Y}},\mathsf{n}^*\mathcal{L}|_{\mathcal{Y}})$. Thus $(\mathcal{Y},\mathcal{f}|_{\mathcal{Y}},\mathcal{L}|_{\mathcal{Y}})$ is isotrivial.

Finally, if $\mathcal{W}_1,\ldots,\mathcal{W}_Q$ are all periodic irreducible components of $\mathcal{Z}_D^1$, we let $\mathcal{Y}_D$ be the union of the images of $\widehat{\mathcal{W}}_i$ under the natural projections $\Pi_{\widehat{\mathcal{W}}_i}:\widehat{\mathcal{W}}_i\to\mathcal{X}$ defined as above. By construction of the varieties $\mathcal{W}_i$, there exists an integer $N\geq1$ such that any irreducible subvariety $\mathcal{C}\in \mathcal{D}$, with $\sup_n\deg_\mathcal{M}(\mathcal{f}^n(\mathcal{C}))\leq D$, satisfies $\mathcal{f}^N(\mathcal{C})\subset\mathcal{Y}_D$. This concludes the proof.
\end{proof}

 \begin{proof}[Proof of Lemma~\ref{lm:isom}]
\noindent\textbf{Step 1.} Take $\Lambda$ to be the maximal regular part. Let $S$ be the set of points $x\in\mathcal{X}_\Lambda$ such that  $x\in (X_{\lambda})_{\mathrm{reg}}$ and $x$ is a repelling periodic point of $f_{\lambda}$, where $\lambda:=\pi(x)$. Pick  $\lambda_0\in\Lambda$. Since repelling periodic points of $f_{\lambda_0}$ are Zariski dense in $X_{\lambda_0}$ by Theorem~\ref{prop:endopolarized}, we can pick $z_0\in S$ with $\pi(z_0)=\lambda_0$. Let $p\geq1$ be its exact period. By the Implicit Function Theorem, there exist a neighborhood $U\subset\Lambda$ of $\lambda_0$ and a local section $z:U\to\mathcal{X}$ of $\pi:\mathcal{X}\to B$ such that
 \[f_\lambda^p(z(\lambda))=z(\lambda), \ \ \lambda\in U,\]
 and $z(\lambda)\in (X_\lambda)_{\mathrm{reg}}$ is repelling for $f_\lambda$ for all $\lambda\in U$. By assumption, there exists $\mathcal{V}\in\mathcal{Z}_D$ such that $(z_0,\lambda_0)\in \mathcal{V}$. 
 Since $\mathcal{V}\in \mathcal{Z}_D$, one has $\deg_{\mathcal{M}}(\mathcal{f}^n(\mathcal{V}))\leq D$ for all $n\geq0$. By Theorem~\ref{tm:caracterization-stable}, this implies that $((\mathcal{X},\mathcal{f},\mathcal{L}),[\mathcal{V}])$ is stable. In particular, Lemma \ref{lm:preperstable} implies that $\mathcal{V}\cap \pi^{-1}(U)= z(U)$. This implies that $\mathcal{V}$ coincides with the graph of $z$, whence $\Pi^{-1}\{z_0\}$ is a singleton.
 
By Theorem~\ref{prop:endopolarized}, $S$ is a Zariski dense subset of $\mathcal{X}_\Lambda$ and we proved that $\Pi^{-1}\{x\}$ is a singleton for all $x\in S$.
Whence there exist dense Zariski open sets $\mathcal{U}\subset\widehat{\mathcal{Z}}_D^\Lambda$ and $\mathcal{W}\subset \mathcal{X}_\Lambda$ such that $\Pi|_{\mathcal{U}}:\mathcal{U}\longrightarrow \mathcal{W}$ is a biholomorphism.
Up to replacing $\Lambda$ with $\Lambda\setminus G$ where $G$ is a subvariety, we can assume $\mathcal{W}=\pi^{-1}(\Lambda)\setminus (\mathcal{X}'\cup\mathcal{X}'')$, where $\mathcal{X}'$ and $\mathcal{X}''$ are strict subvarieties of $\mathcal{X}$, flat over $\Lambda$, which are the respective Zariski closures of $\{x\in \mathcal{X}_\Lambda\, ; \ 1<\mathrm{Card}(\Pi^{-1}\{x\})<+\infty\}$ and $\{x\in \mathcal{X}_\Lambda\, ; \ \Pi^{-1}\{x\}$ is infinite$\}$.

\medskip

\par\noindent \textbf{Step 2.} Note that $\mathcal{f}^{-1}(\mathcal{X}_\Lambda'')=\mathcal{X}_\Lambda''=\mathcal{f}(\mathcal{X}_\Lambda'')$. Indeed, $\mathcal{f}$ is a finite morphism on $\mathcal{X}_\Lambda$ thus  if $x\in \mathcal{X}_\Lambda\setminus \mathcal{X}_\Lambda''$, then any $\mathcal{V}\in\mathcal{Z}_D$ passing through $x$ has a unique image by $\mathcal{f}$ and at most finitely many preimages, whence 
\[\mathcal{f}^{-1}(\mathcal{X}_\Lambda\setminus \mathcal{X}_\Lambda'')=\mathcal{X}_\Lambda\setminus \mathcal{X}_\Lambda''=\mathcal{f}(\mathcal{X}_\Lambda\setminus \mathcal{X}_\Lambda''),\]
and thus $\mathcal{f}^{-1}(\mathcal{X}_\Lambda'')=\mathcal{X}_\Lambda''=\mathcal{f}(\mathcal{X}_\Lambda'')$. We thus have proved so far that the map $\Pi|_{\widehat{\mathcal{Z}}_D^\Lambda}:\widehat{\mathcal{Z}}_D^\Lambda\to\mathcal{X}_\Lambda$ restricts to $\mathcal{Z}^{(0)}:=\Pi|_{\widehat{\mathcal{Z}}_D^\Lambda}^{-1}(\mathcal{X}_\Lambda\setminus \mathcal{X}_\Lambda'')$ as a finite birational morphism. Since $\mathcal{X}_\Lambda$ is normal, this implies $\Pi|_{\widehat{\mathcal{Z}}_D^\Lambda}|_{\mathcal{Z}^{(0)}}$ is an isomorphism from $\mathcal{Z}^{(0)}$ to $\mathcal{X}_\Lambda\setminus \mathcal{X}_\Lambda''$. In particular, $\mathcal{X}_\Lambda'=\varnothing$.

\medskip

\par\noindent \textbf{Step 3.} We now prove that $\mathcal{X}_\Lambda''=\varnothing$. 
Pick a general parameter $\lambda_0\in \Lambda$. Assume that for a general parameter $\lambda\in \Lambda$, there is $x\in X_\lambda\cap\mathcal{X}_\Lambda''$ such that there is $\mathcal{V}\in \mathcal{Z}_D$ with $x\in \mathcal{V}$ and $\mathcal{V}_\Lambda\not\subset\mathcal{X}_\Lambda''$ and let us reach a contradiction. Let
\[W_\lambda:=\bigcup_{\mathcal{V}} X_{\lambda_0}\cap \mathcal{V},\]
where the union ranges over $\{\mathcal{V}\in \mathcal{Z}_D; \ \mathcal{V}\cap X_\lambda \subset \mathcal{X}_\Lambda''\}$ which is Zariski closed. By construction, the set $W_\lambda$ is a Zariski closed totally invariant subset for $f_{\lambda_0}$. By hypothesis, it is non-empty. Therefore, to a general $\lambda\neq\lambda_0$, we can associate a non-empty closed subset $W_\lambda$. Since there is one and only one $\mathcal{V}\in\mathcal{Z}_D$ passing through a point in $\mathcal{W}$, for any general $\lambda\neq\lambda'$, we have $W_\lambda\cap \mathcal{W}\neq W_{\lambda'}\cap \mathcal{W}$, whence $f_{\lambda_0}$ has infinitely many distinct totally invariant Zariski closed subsets. This is a contradiction by~\cite[Theorem~1.47]{dinhsibony2}. 
Assume now that $\dim\mathcal{X}_\Lambda''>\dim \mathcal{B}$, we cap apply step 1 to the family $(\mathcal{X}_\Lambda'',\mathcal{f}|_{\mathcal{X}_\Lambda''},\mathcal{L}|_{\mathcal{X}_\Lambda''})$.
So the set $\mathcal{X}_\Lambda''$ is at worst a  subvariety of dimension $\dim\mathcal{B}$ which is flat over $\Lambda$ and totally invariant by $f$. But it is supposed to contain infinitely many distinct such subvarieties of $\mathcal{Z}_D$ which pass by points in $\mathcal{X}_\Lambda''$. This is impossible, whence $\mathcal{X}_\Lambda''=\varnothing$.
 \end{proof}

\subsection{Proof of the second part of Theorem~\ref{tm:Pk_intro}}
Now we can deduce the second part Theorem~\ref{tm:Pk_intro} from Theorem~\ref{tm:Pk}. Let us rephrase the second part of Theorem~\ref{tm:Pk_intro}. 

\begin{theorem}\label{tm:mainPk}
Let $(\mathcal{X},\mathcal{f},\mathcal{L})$ be a non-isotrivial family of polarized endomorphisms.
There exist integers $N\geq0$ and $D_0\geq1$ and a proper subvariety $\mathcal{Y}$ of $\mathcal{X}$ such that $\pi$ is surjective on every irreducible component of $\mathcal{Y}$, whose irreducible components are periodic, and such that
\begin{enumerate}
\item $\mathcal{f}(\mathcal{Y})=\mathcal{Y}$ and for any periodic irreducible component $\mathcal{V}$ of $\mathcal{Y}$ of period $n$, the family of polarized endomorphisms $(\mathcal{V},\mathcal{f}^{n}|_\mathcal{V},\mathcal{L}|_\mathcal{V})$ is isotrivial,
\item If $a:\mathcal{B}\dashrightarrow\mathcal{X}$ is a marked point such that $((\mathcal{X},\mathcal{f},\mathcal{L}),a)$ is stable, then
\begin{enumerate}
\item $\lambda\mapsto f_\lambda^N(a(\lambda))$ is a section of $\pi|_{\mathcal{Y}}:\mathcal{Y}\to\mathcal{B}$,
\item if $\mathcal{C}_0(a)$ is the Zariski closure of the image of a regular part over which $a$ is regular, we have $\deg_\mathcal{M}(\mathcal{C}_0(a))\leq D_0$.
\end{enumerate}
\end{enumerate}
\end{theorem}

\begin{proof}
For a section $a$ and an integer $n\geq0$, denote by $a_n$ the section given by $\lambda\mapsto f^n_\lambda(a(\lambda))$. Assume there is a rational section $a$ of $\pi$ with $\deg_\mathcal{M}(\mathcal{C}_n(a))\leq D$ for all $n\geq0$, where $\mathcal{C}_n(a)$ is the Zariski closure of $a_n(\Lambda)$ for some regular part over which $a$ is regular. By Theorem~\ref{tm:Pk}, there exist a subvariety $\mathcal{Y}_D$ of $\mathcal{X}$ such that the restriction of $\pi$ to any irreducible components of $\mathcal{Y}_D$ is surjective and an integer $N_D\geq1$ such that properties $1$ and $2$ are satisfied.

Moreover, if $D\leq D'$, then $\mathcal{Y}_D\subset \mathcal{Y}_{D'}$. 
	All there is left to prove is the existence of $D_0\geq1$ such that if $\deg_\mathcal{M}(\mathcal{C}_0(a))>D_0$, then $((\mathcal{X},\mathcal{f},\mathcal{L}),a)$ is not stable.

Let $X$ be the generic fiber of $\pi$, $L:=\mathcal{L}|_X$ and $f:=\mathcal{f}|_X$, so that $(\mathcal{X},\mathcal{f},\mathcal{L})$ is a model for $(X,f,L)$ with regular part $\Lambda$. Denote by $\widehat{h}_f$ the canonical height function of $(X,f,L)$ as on Section~\ref{sec:height}. Any rational section corresponds to some $a\in X_\mathbf{K}$, where $\mathbf{K}=\C(\mathcal{B})$ and
\begin{align*}
h_{X,L}(a) & =\left(\mathcal{C}_0(a)\cdot c_1(\mathcal{L})\cdot c_1(\pi^*\mathcal{N})^{\dim\mathcal{B}-1}\right)\\
& =\deg_\mathcal{M}(\mathcal{C}_0(a))-\left(\mathcal{C}_0(a)\cdot c_1(\pi^*\mathcal{N})^{\dim\mathcal{B}}\right)\\
& =\deg_\mathcal{M}(\mathcal{C}_0(a))-1,
\end{align*}
since $\mathcal{C}_0(a)$ is the Zariski closure of the image of a rational section of $\pi$. Indeed, the size of the field extension $\mathbf{K}(a)$ of $\mathbf{K}$ is exactly the intersection number $(\mathcal{C}_0(a)\cdot c_1(\pi^*\mathcal{N})^{\dim\mathcal{B}})$.
Recall from Section~\ref{sec:height} that $h_{X,L}=\widehat{h}_f+O(1)$, so there exists a constant $C>0$ such that $\widehat{h}_f\geq h_{X,L}-C$, whence
\[\widehat{h}_f(a)\geq \deg_\mathcal{M}(\mathcal{C}_0(a))-(C+1).\]
If $\deg_\mathcal{M}(\mathcal{C}_0(a))> C +1$, we thus find $\widehat{h}_f(a)>0$. By Theorem~\ref{tm:caracterization-stable}, we deduce that $((\mathcal{X},\mathcal{f},\mathcal{L}),a)$ is not stable, ending the proof.
\end{proof}

We now deduce Corollary~\ref{cor:Northcott_function_field} from Theorem~\ref{tm:mainPk}, following the argument of \cite[\S~3.1]{Cantat-Gao-Habegger-Xie}.

\begin{proof}[Proof of Corollary~\ref{cor:Northcott_function_field}]
Here the polarized endomorphism $(X,f,L)$ is defined over a function field $\mathbf{K}:=\mathbf{k}(\mathcal{B})$ where $\mathbf{k}$ is a field of characteristic zero, where $\mathcal{B}$ is a normal projective $\mathbf{k}$-variety. There is an algebraically closed subfield $\mathbf{k}_0$ of $\mathbf{k}$ of finite transcendence degree over $\mathbb{Q}$ such that, via base change, we can assume $\mathcal{B}$ comes from a variety defined over $\mathbf{k}_0$. We can also assume $(X,f,L)$ comes from a polarized endomorphism defined over $\mathbf{k}_0(\mathcal{B})$. Now, the field $\mathbf{k}_0$ can be embedded into $\mathbb{C}$, so that $\mathcal{B}$ induces a complex variety $\mathcal{B}_\mathbb{C}$ and $(X,f,L)$ induces a polarized endomorphism $(X_\mathbb{C},f_\mathbb{C},L_\mathbb{C})$ defined over $\mathbf{K}':=\mathbb{C}(\mathcal{B}_\mathbb{C})$.

We now need to justify that if $\widehat{h}_f(z)=0$ and if $z_\mathbb{C}\in X_\mathbb{C}(\mathbf{K}')$ is induced by $z$, then $\widehat{h}_{f_\mathbb{C}}(z_\mathbb{C})=0$. Let $\mathcal{N}$ be an ample line bundle on $\mathcal{B}$ and $\mathcal{N}_\mathbb{C}$ be the induced ample line bundle on $\mathcal{B}_\mathbb{C}$. Let $(\mathcal{X},\mathcal{f},\mathcal{L})$ be a model of $(X,f,L)$ and let $(\mathcal{X}_\mathbb{C},\mathcal{f}_\mathbb{C},\mathcal{L}_\mathbb{C})$ be a model of $(X_\mathbb{C},f_\mathbb{C},L_\mathbb{C})$. Let finally $\mathcal{z}$ (resp. $\mathcal{z}_\mathbb{C}$) be the rational section of $\pi:\mathcal{X}\to\mathcal{B}$ induced by $z$ (resp. of $\pi_\mathbb{C}:\mathcal{X}_\mathbb{C}\to\mathcal{B}_\mathbb{C}$ induced by $z_\mathbb{C}$). Denote as above by $\mathcal{C}_n(z)$ (resp. $\mathcal{C}_n(z_\mathbb{C})$) the Zariski closure of $\mathcal{z}(\Lambda)$ in $\mathcal{X}$ (resp. of $\mathcal{z}_\mathbb{C}(\Lambda_\mathbb{C})$ in $\mathcal{X}$).  Then
\begin{align*}
h_{X,L}(f^n(z)) & = \left(\mathcal{C}_0(z)\cdot c_1(\mathcal{L})\cdot c_1(\pi^*\mathcal{N})^{\dim\mathcal{B}-1}\right)\\
& = \left(\mathcal{C}_0(z_\mathbb{C})\cdot c_1(\mathcal{L}_\mathbb{C})\cdot c_1(\pi^*\mathcal{N}_\mathbb{C})^{\dim\mathcal{B}_\mathbb{C}-1}\right)\\
& = h_{X_\mathbb{C},L_\mathbb{C}}(f_\mathbb{C}^n(z_\mathbb{C})),
\end{align*}
since the intersection numbers are invariant under extension of the field of constants.
We thus find $\widehat{h}_{f}(z)=\widehat{h}_{f_\mathbb{C}}(z_\mathbb{C})$.

We now can assume $\mathcal{B}$ is a complex variety and $\mathbf{K}:=\mathbb{C}(\mathcal{B})$. Let $(X,f,L)$ be a polarized endomorphism over $\mathbf{K}$ and let $(\mathcal{X},\mathcal{f},\mathcal{L})$ be a model of $(X,f,L)$ over $\mathcal{B}$. Applying Theorem~\ref{tm:mainPk} gives the first point, since $X$ is isomorphic to the generic fiber of $\pi:\mathcal{X}\to\mathcal{B}$, $L=\mathcal{L}|_X$ and $f=\mathcal{f}|_X$. For the second point, we can remark that, up to changing model, we can assume a point $z\in X(\mathbf{K})$ induces a marked point $a:\mathcal{B}\dashrightarrow \mathcal{X}$.
\end{proof}

\begin{example}[The elementary Desboves family II]\label{Desboves2}\normalfont
We keep the notations of Example~\ref{Desboves1}.
The critical set of $f_\lambda$ is the union of three lines $L_j$ passing through $\rho_0=[0:1:0]$ and $[1:0:\alpha^j]$ with $\alpha^3=1$, for $j=0,1,2$, each of them counted with multiplicity 2 together with the curve
\[\mathcal{C}_\lambda':=\{[x:y:z]\in\mathbb{P}^2\, : \ -x^3+z^3+\lambda(4y^3+x^3+z^3)=0\}.\]
Note that the lines $L_j$ correspond to preimages of critical points of the Latt\`es map $g:=f_\lambda|_Y$ by the fibration $p:\mathbb{P}^2\dasharrow Y$ which semi-conjugates $f_\lambda$ to $g$ and whose fibers are the lines of the pencil $\mathcal{P}$.

We now make a base change for our family : Let $\mathbf{K}\supset \C(\lambda)$ be the splitting field of the equation $w^3=(1+\lambda)/(1-\lambda)$. Then $\mathbf{K}=\C(\mathcal{B})$ for some smooth complex projective curve $\mathcal{B}$ and there exists a finite branched cover $\tau:\mathcal{B}\to\mathbb{P}^1$ of degree $[\mathbf{K}:\C(\lambda)]=3$. Changing base by $\tau$ allows to define three marked points $a_1,a_2,a_3:\mathcal{B}\to\mathbb{P}^2$ where $a_i(\lambda)=[x_i(\lambda):0:z_i(\lambda)]$ with $(1+\lambda)z_i(\lambda)^3+(1-\lambda)x_i(\lambda)^3=0$ (they parametrize intersection points of $H_y=\{y=0\}$ with $\mathcal{C}_\lambda'$).
We have defined a family $(\mathbb{P}^2\times\mathcal{B},\mathcal{f}_\mathcal{B},\mathcal{O}_{\mathbb{P}^2}(1))$ with regular part $\Lambda:=\tau^{-1}(\C^*)$. Let $\mathcal{Y}\subset\mathbb{P}^2\times\mathcal{B}$ be the subvariety such that $\mathcal{f}_\mathcal{B}(\mathcal{Y})=\mathcal{Y}$ given by Theorem~\ref{tm:mainPk}. As seen in Section~\ref{sec:algebraic},
\[H_y\times\mathcal{B} \subset\mathcal{Y} \quad \text{and} \quad \mathcal{C}\times\mathcal{B}\subset\mathcal{Y},\]
but $H_x\times\mathcal{B} \not\subset\mathcal{Y}$ and $H_z\times\mathcal{B} \not\subset\mathcal{Y}$. The subvariety $\mathcal{Y}$ may have other irreducible components, which would have to have period at least $2$ under iteration of $\mathcal{f}_\mathcal{B}$.

\medskip

Any constant marked point $\alpha\in H_y\cup\mathcal{C}$ is stable (but can have infinite orbit). Note also that $a_i(\lambda)\in\mathcal{Y}_\lambda$ for any $\lambda\in\C^*$. However $a_i$ is \emph{not} stable. Indeed, one has
\[\widehat{h}_{f}(a_i)=\int_{\mathbb{P}^2\times\Lambda}[\Gamma_{a_i}]\wedge\widehat{T}_{\mathcal{f}_\mathcal{B}}=\int_{\mathcal{B}}\tilde{a_i}^*(\mu_g)=\deg(\tilde{a_i})>0,\]
where $\tilde{a}_i(\lambda)=[x_i(\lambda):z_i(\lambda)]$ for all $\lambda\in\C^*$ and $\mu_g$ is the Green measure of $g:H_y\to H_y$ which gives no mass to proper analytic sets.
This example emphasizes that the conclusion that $\Gamma_{a_i}\subset\mathcal{Y}$ is not sufficient to have stability.
\end{example}

\section{Applications}\label{section5}

\subsection{The case of abelian varieties}

If $A$ is an abelian variety defined over $\mathbf{K}$, $L$ a symmetric ample line bundle on $A$ and $[n]$ the morphism of multiplication by the integer $n$ on $A$, the N\'eron-Tate height function $\widehat{h}_A:A(\bar{\mathbf{K}})\to\R$ of $A$ is defined by
\[\widehat{h}_A=\lim_{n\to\infty}\frac{1}{n^2}h_{A,L}\circ[n].\]
In particular, $\widehat{h}_A\circ [n]=n^2\widehat{h}_A$ for  any $n\in\mathbb{Z}$. Now fix $n\in\mathbb{Z}$ with $|n|\geq2$. Applying Corollary~\ref{cor:Northcott_function_field} to the polarized endomorphism $(A,[n],L)$ gives the next result due to Lang and N\'eron~\cite{Lang-Neron}:
\begin{theorem}[Lang-N\'eron]\label{tm:LangNeron}
Let $\mathbb{k}$ be a field of characteristic zero and let $\mathbf{K}$ be the function field of a normal $\mathbf{k}$-variety. Let $A$ be a non-isotrivial abelian variety defined over $\mathbf{K}$, $L$ be a symmetric ample line bundle on $A$ and $\tau:A_0\to A$ be the $\bar{\mathbf{K}}/\mathbf{k}$-trace of $A$. Then, there exists an integer $m$ such that for any $z\in A(\mathbf{K})$ with $\hat{h}_{A}(z)=0$, we have $[m]z\in \tau(A_0(\mathbf{K}))$.
\end{theorem}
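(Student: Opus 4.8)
The strategy is to apply Corollary~\ref{cor:Northcott_function_field} to the polarized endomorphism $(A,[n],L)$ for a fixed integer $n\geq 2$, and then to rephrase its conclusion in terms of the $\mathbf{K}/\mathbf{k}$-trace. Since $L$ is symmetric we have $[n]^*L\simeq L^{\otimes n^2}$, so $(A,[n],L)$ is a polarized endomorphism of degree $d=n^2$; its Call--Silverman canonical height agrees with the N\'eron--Tate height, because both $\widehat{h}_{[n]}$ and $\widehat{h}_A$ satisfy $h=h_{A,L}+O(1)$ and $h\circ[n]=n^2\,h$, hence coincide by the uniqueness in \cite[Theorem~1.1]{CS-height}. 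Moreover $(A,[n],L)$ is non-isotrivial: otherwise the polarized variety $(A,L)$ would be isotrivial, so all fibres $A_\lambda$ would be isomorphic as varieties, hence (by rigidity of abelian varieties) as abelian varieties, contradicting the non-isotriviality of $A$. Since $A$ is smooth, hence normal, the hypotheses of Corollary~\ref{cor:Northcott_function_field} are satisfied.

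Corollary~\ref{cor:Northcott_function_field} then produces a subvariety $Y\subset A_{\mathbf{K}}$ with $[n](Y)=Y$ and an integer $N\geq 0$ such that $[n^{N}]z\in Y$ for every $z\in A(\mathbf{K})$ with $\widehat{h}_A(z)=0$, and such that every irreducible component $V$ of $Y$ is periodic --- here using that $[n]$ is finite surjective, so it permutes the finitely many components of $Y$ --- with $(V,[n]^{k_V}|_V,L|_V)$ isotrivial, where $k_V$ is the period of $V$. The key step is now to describe these components. Each $V$ is a periodic subvariety of $A$ under multiplication by $n$, so by the classical structure theory of preperiodic subvarieties of abelian varieties, $V=t_V+B_V$ with $B_V$ an abelian subvariety and $t_V$ a torsion point whose order divides $n^{k_V}-1$. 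Translating by $-t_V$ identifies $(V,[n]^{k_V}|_V,L|_V)$, over $\mathcal{B}$, with a polarized dynamical datum carried by $B_V$, so isotriviality forces the fibres of $B_V$ to be mutually isomorphic as varieties, hence as abelian varieties, i.e. $B_V$ is isotrivial; and, by the defining universal property of the $\mathbf{K}/\mathbf{k}$-trace (Chow, Lang--N\'eron), every isotrivial abelian subvariety of $A$ is contained in $\tau(A_0)$. Therefore $V\subset t_V+\tau(A_0)$.

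To conclude, set $m_0:=\operatorname{lcm}_V \operatorname{ord}(t_V)$, the least common multiple over the finitely many components $V$ of $Y$, and $m:=|\ker\tau|\cdot m_0\cdot n^{N}$ (the kernel of $\tau$ is finite as $\operatorname{char}\mathbf{K}=0$). If $z\in A(\mathbf{K})$ has $\widehat{h}_A(z)=0$, then $[n^{N}]z$ lies in some component $t_V+B_V$ of $Y$, whence $[m_0 n^{N}]z\in [m_0]t_V+B_V=B_V\subset\tau(A_0)$, an element of $A(\mathbf{K})$; multiplying once more by $|\ker\tau|$ lifts it to the image of $A_0(\mathbf{K})$, so that $[m]z\in\tau(A_0(\mathbf{K}))$. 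This is the assertion, with $m$ depending only on $(A,[n],L)$.

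I expect the genuine content --- and the only place where the non-isotriviality of $A$ is really used --- to be in the second paragraph: passing from ``$(V,[n]^{k_V}|_V,L|_V)$ is isotrivial'' to ``$V$ is a torsion translate of an abelian subvariety of the $\mathbf{K}/\mathbf{k}$-trace'' relies on the structure of preperiodic subvarieties under $[n]$, on rigidity of abelian varieties, and on the Chow/Lang--N\'eron theory of the trace; everything else is formal bookkeeping on top of Corollary~\ref{cor:Northcott_function_field}. An alternative to this structure-theoretic step would be to invoke the Geometric Bogomolov Conjecture for abelian varieties as in \cite{Yamaki}, whose exceptional locus is precisely the trace; but since we only treat points of height exactly $0$, the elementary description above is enough.
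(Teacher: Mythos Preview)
Your proof is correct and follows the same approach as the paper: apply Corollary~\ref{cor:Northcott_function_field} to $(A,[n],L)$ (the paper takes $n=2$), identify each periodic component of $Y$ as a torsion translate of an isotrivial abelian subvariety, and invoke the universal property of the trace to place it inside $\tau(A_0)$. You are in fact more careful than the paper on two points it leaves implicit: the explicit choice of $m$, and the passage from a $\mathbf{K}$-point of $\tau(A_0)$ to the image of a $\mathbf{K}$-point of $A_0$ via the dual isogeny.
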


\begin{proof}
As in the proof of Corollary~\ref{cor:Northcott_function_field}, we can assume $\mathbf{k}=\C$ without loss of generality. We apply Corollary~\ref{cor:Northcott_function_field} to the polarized endomorphism $(A,[2],L)$. We have a possibly reducible subvariety $Y=Y_1\cup \cdots \cup Y_M$ where $Y_i$ is periodic for all $i$, i.e.\ there exist integers $n_j>0$ with $[2^{n_j}]Y_j=Y_j$, for all $1\leq j\leq M$, which is  isotrivial. 
Up to taking a finite extension of $\mathbf{K}$, we can assume $[2^{n_j}]$ has a fixed point $p_j$ in $Y_j$. In particular, $p_j$ is a torsion point of $A$ and, if $A_j=Y_j-p_j$, then $A_j$ is a subgroup of $A$, whence $A_j$ is an abelian subvariety of $A$.

Note that, since $A_j$ is isotrivial, there exist an abelian variety $C_j$ defined over $\C$ and a morphism $\tau_j:C_j\to A$ such that $A_j=\tau_j(C_j)$. In particular, $A_j\subset \tau(A_0)$, which gives the sought statement.
\end{proof}

\subsection{A conjecture of Kawaguchi and Silverman}

Let $X$ be a projective variety defined over a field $\mathbf{K}$ of characteristic zero, which is either a number field or the function field of a projective variety defined over a field of characteristic zero, and $f:X\dashrightarrow X$ a dominant rational map. Recall the dynamical degrees $\lambda_j(f)$ are defined in \S\ref{dynamics_polarized}.
Let $X_f(\bar{\mathbf{K}})$ be the set of points whose full forward orbit is well-defined. In this case, one can also introduce the \emph{arithmetic degree} of a point $P\in X_f(\bar{\mathbf{K}})$ as  
\[\alpha_f(P):=\lim_{n\to+\infty}\max\left(h_X(f^n(P)),1\right)^{1/n},\]
where $h_X$ is any given Weil height function $h_X:X(\bar{\mathbf{K}})\to\mathbb{R}$, when the limit exists.

The following consequence of Theorem~\ref{tm:Pk} proves \cite[Conjecture 6]{Kawaguchi-Silverman-arithmetic-degree} in the case of polarized endomorphisms over function fields, see \cite{Kawaguchi-Silverman-arithmetic-degree} for the case of number fields, as well as ~\cite{Kawaguchi-Silverman-blocks,Lesieutre-Satriano,Matsuzawa-Sano-Shibata,Shibata,Matsuzawa}.
\begin{theorem}\label{tm:KS}
Let $\mathbf{k}$ be a field of characteristic zero and $\mathcal{B}$ be a normal projective $\mathbf{k}$-variety. Let $(X,f,L)$ be a non-isotrivial polarized endomorphism over $\mathbf{K}:=\mathbf{k}(\mathcal{B})$ of degree $d$. Fix a point $P\in X(\bar{\mathbf{K}})$. Then
\begin{enumerate}
\item the limit $\alpha_f(P)$ exists and is equal to $1$ or $d$,
\item if $P$ has Zariski dense orbit, we have $\alpha_f(P)=\lambda_1(f)=d$.
\end{enumerate}
\end{theorem}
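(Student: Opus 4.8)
The plan is to reduce everything to the Geometric Dynamical Northcott Property (Theorem~\ref{tm:Pk}, equivalently Corollary~\ref{cor:Northcott_function_field}) together with the basic functoriality $h_{X,L}=\widehat{h}_f+O(1)$ and $\widehat{h}_f\circ f=d\cdot\widehat{h}_f$. First, as in the other proofs of the paper, replace $\mathbf{k}$ by an algebraic extension and embed the relevant algebraically closed subfield into $\C(\mathcal{A})$ so that we may assume $\mathbf{k}=\C$; also replace $\mathbf{K}$ by a finite extension so that $P\in X(\mathbf{K})$, which changes neither $\alpha_f(P)$ nor the conclusion. Fix a model $(\mathcal{X},\mathcal{f},\mathcal{L})$ over $\mathcal{B}$ with regular part $\Lambda$, and the ample bundle $\mathcal{M}=\mathcal{L}\otimes\pi^*\mathcal{N}$.

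Next, compute $\alpha_f(P)$ by a dichotomy on $\widehat{h}_f(P)$. If $\widehat{h}_f(P)=0$, then $h_X(f^n(P))=\widehat{h}_f(f^n(P))+O(1)=d^n\widehat{h}_f(P)+O(1)=O(1)$, so $\max(h_X(f^n(P)),1)^{1/n}\to 1$, i.e. $\alpha_f(P)=1$. If $\widehat{h}_f(P)>0$, then $h_X(f^n(P))=d^n\widehat{h}_f(P)+O(1)$, and taking $n$-th roots gives $\alpha_f(P)=d$. This already proves (1) and shows $\{\alpha_f(Q):Q\in X(\mathbf{K})\}\subseteq\{1,d\}$; both values are attained (take $Q$ a periodic point, which exists and is defined over $\bar{\mathbf{K}}$ by Corollary~\ref{cor:periodic-dense} after specialization/spreading out, for the value $1$; and take $Q$ with $\widehat{h}_f(Q)>0$ — such a point exists since $h_{X,L}$ is unbounded on $X(\bar{\mathbf{K}})$, e.g. by choosing a non-isotrivial marked point, using Theorem~\ref{tm:caracterization-stable} or directly that $\deg_\mathcal{M}(\mathcal{C}_a)$ can be made large — for the value $d$). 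Here one should be slightly careful to argue over $\bar{\mathbf{K}}$ rather than $\mathbf{K}$: points of $X(\bar{\mathbf{K}})$ are handled by base change to a finite extension $\mathbf{K}'=\C(\mathcal{B}')$, and the equality $\widehat{h}_f\circ f = d\widehat{h}_f$ together with $h_X=\widehat{h}_f+O(1)$ is insensitive to this, so the same dichotomy applies verbatim.

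For (3), suppose $P$ has Zariski dense forward orbit $\{f^n(P):n\geq0\}$ in $X$. Since $\lambda_1(f)=d$ for a polarized endomorphism — this follows from $f^*L\simeq L^{\otimes d}$, which gives $(f^n)^*H^j\cdot H^{\dim X-j}=d^{jn}(H^{\dim X})$ for $H=L$ big and nef, hence $\lambda_j(f)=d^j$, as in the proof of Lemma~\ref{lm:quasi-polarized} — it suffices to show $\alpha_f(P)=d$, i.e. $\widehat{h}_f(P)>0$. If instead $\widehat{h}_f(P)=0$, then by Corollary~\ref{cor:Northcott_function_field} there is a proper subvariety $Y\subsetneq X_{\mathbf{K}}$ with $f(Y)=Y$ and an integer $N\geq0$ such that $f^N(P)\in Y$; since $f(Y)=Y$ the whole forward orbit of $f^N(P)$ stays in the proper subvariety $Y$, so the forward orbit of $P$ is contained in $\bigcup_{j=0}^{N-1}f^{-j}(Y)\cup Y$ up to finitely many points — more simply, $\overline{\{f^n(P):n\geq N\}}\subseteq Y\subsetneq X$, contradicting Zariski density of the orbit. (One uses here that $X$ is irreducible, so a Zariski dense orbit cannot have all but finitely many points in a proper closed subset.) Hence $\widehat{h}_f(P)>0$ and $\alpha_f(P)=d=\lambda_1(f)$.

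The main obstacle is the bookkeeping around base field extensions in item (2): ensuring the value set is computed over $\bar{\mathbf{K}}$ and not just $\mathbf{K}$, and verifying that the value $d$ is actually attained by some $Q\in X_f(\bar{\mathbf{K}})$ (equivalently, producing a point of positive canonical height). Everything else is a direct consequence of the dichotomy $\widehat{h}_f(P)\in\{0\}\cup(0,\infty)$ combined with $h_X=\widehat{h}_f+O(1)$ and Corollary~\ref{cor:Northcott_function_field}; no new analytic input beyond what is already established is needed.
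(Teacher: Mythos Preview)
Your proposal is correct and follows essentially the same approach as the paper: compute $\lambda_1(f)=d$ from $f^*L\simeq L^{\otimes d}$, use the dichotomy on $\widehat{h}_f(P)$ together with $h_{X,L}=\widehat{h}_f+O(1)$ to get $\alpha_f(P)\in\{1,d\}$, and for (3) argue that $\widehat{h}_f(P)=0$ forces the orbit into a proper invariant subvariety via the Geometric Dynamical Northcott Property. The only cosmetic difference is that you invoke Corollary~\ref{cor:Northcott_function_field} directly, while the paper goes through Theorem~\ref{tm:caracterization-stable} and Theorem~\ref{tm:Pk} on the model side; you are also slightly more explicit than the paper in checking that both values $1$ and $d$ are actually attained.
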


\begin{proof}
Let us first remark that, since $(X,f,L)$ is polarized, we have $f^*L\simeq L^{\otimes d}$, so that
\[\left((f^n)^*c_1(L)\cdot c_1(L)^{\dim X-1}\right)=\left(c_1(L)^{{d^n}}\cdot c_1(L)^{\dim X-1}\right)=d^n\left( c_1(L)^{\dim X}\right),\]
which gives $\lambda_1(f)=d$.

We now take $\mathsf{x}\in X(\bar{\mathbf{K}})$. 
Let $\widehat{h}_f:X(\bar{\mathbf{K}})\to\R_+$ be the canonical height function of $f$. Assume $\widehat{h}_f(\mathsf{x})>0$. Then
\[h_{X,L}(f^n(\mathsf{x}))=\widehat{h}_f(f^n(\mathsf{x}))+O(1)=d^n\widehat{h}_f(\mathsf{x})+O(1),\]
so that $\alpha_f(\mathsf{x})=d$, as sought. Note also that if $\widehat{h}_{f}(\mathsf{x})=0$, this implies
\[h_{X,L}(f^n(\mathsf{x}))=O(1),\]
so that $\alpha_f(\mathsf{x})=1$.
We are thus left with proving that, if $\mathsf{x}$ has a Zariski dense orbit, then $\widehat{h}_{f}(\mathsf{x})>0$.
Assume $\mathsf{x}$ has Zariski dense orbit, but $\widehat{h}_f(\mathsf{x})=0$. Let $\mathbf{K}'$ be the field of rational functions of some normal projective variety $\mathcal{B}'$ such that $\mathsf{x}$ is defined over $\mathbf{K}'$. Let $(\mathcal{X},\mathcal{f},\mathcal{L})$ be a non-isotrivial model of $(X,f,L)$, $\pi:\mathcal{X}\to\mathcal{B}'$ be the projection onto $\mathcal{B}'$, and $\Lambda$ be a regular part. Up to changing model, to $\mathsf{x}$, we can associate a marked point $x:\mathcal{B}'\dashrightarrow\mathcal{X}$ and a subvariety $\mathcal{C}\subset\mathcal{X}$ (which is the Zariski closure of $x(\Lambda)$ in $\mathcal{X}$ and which is flat over $\Lambda$). According to Theorem~\ref{tm:caracterization-stable}, we have
\[\sup_n\deg_\mathcal{M}(\mathcal{f}^n(\mathcal{C}))=D<+\infty.\]
We now can apply Theorem~\ref{tm:Pk}, and we deduce the existence of $\mathcal{Y}_D$ a strict subvariety of $\mathcal{X}$, such that $\mathcal{f}^n(\mathcal{C})\subset\mathcal{Z}$ for all $n\geq N$. If $Y$ is the generic fiber of $\mathcal{Y}_D\to\mathcal{B}'$, we have $f^n(\mathsf{x})\in Y$ for any $n\geq N$, which is a contradiction.
\end{proof}

\par\noindent We refer the reader to \cite{DGHLS} for generalization of the conjecture of Kawaguchi and Silverman where Theorem \ref{tm:formulaheight} provides partial answers.

\begin{remark}\normalfont
If $(X,f,L)$ is isotrivial, it is possible that $X(\mathbf{k})$ is Zariski dense in $X$. In particular, there can be a point $x\in X(\mathbf{k})$ with Zariski dense orbit and $\alpha_f(x)=1$ for this point. In particular the second assertion of Theorem~\ref{tm:KS} cannot hold true in general, without assuming $(X,f,L)$ is non-isotrivial.
\end{remark}

\section{The geometric dynamical Bogomolov conjecture}\label{sec:Bogomolov}

In this section, we let $\mathbf{k}$ be a field of characteristic $0$, we also let $\mathcal{B}$ be a normal projective $\mathbf{k}$-variety and we let $\mathbf{K}:=\mathbf{k}(\mathcal{B})$ be the field of rational functions of $\mathcal{B}$.
\subsection{Motivation of the conjecture}
As mentioned in the introduction, the geometric Bogomolov conjecture has been proved in \cite{Cantat-Gao-Habegger-Xie}. Let us first state their result.
\begin{theorem}[\cite{Cantat-Gao-Habegger-Xie}]
Let $A$ be an abelian variety, $L$ be a symmetric ample line bundle on $A$ and $\widehat{h}_A$ be the N\'eron-Tate height of $A$ relative to $L$. Let $Z$ be an irreducible subvariety of $A_{\bar{\mathbf{K}}}$. Assume for all 
$\varepsilon>0$, the set $Z_\varepsilon:=\{x\in Z(\bar{\mathbf{K}})\, : \ \widehat{h}_A(x)<\varepsilon\}$
is Zariski dense in $Z$. Then, there exist a torsion point $a\in A(\bar{\mathbf{K}})$, an abelian subvariety $C\subset A$ and a subvariety $W\subset A_0$ of the $\bar{\mathbf{K}}/\mathbf{k}$-trace $(A_0,\tau)$ of $A$ such that
\[Z=a+C+\tau(W\otimes_k\bar{\mathbf{K}}).\]
\end{theorem}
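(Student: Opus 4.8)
The plan is to prove the contrapositive: if $Z$ is \emph{not} of the form $a+C+\tau(W\otimes_k\bar{\mathbf K})$, then some $Z_\varepsilon$ fails to be Zariski dense in $Z$, equivalently the essential minimum $e(Z)$ (the supremum over Zariski-dense open $U\subseteq Z$ of $\inf_{x\in U}\widehat h_A(x)$) is positive. First I would invoke Yamaki's reductions: the conjecture for all $(A,Z)$ follows from the case where $\mathbf k=\mathbb C$ (Lefschetz principle), the $\bar{\mathbf K}/\mathbf k$-trace $A_0$ of $A$ is trivial, the stabilizer of $Z$ is trivial, and $Z$ is not contained in a translate of a proper abelian subvariety; for such a pair one must show that $e(Z)>0$ whenever $1\le\dim Z<\dim A$ (the case $\dim Z=0$ being the Lang--N\'eron theorem, already invoked in Theorem~\ref{tm:LangNeron}). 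By Zhang's inequality relating the essential minimum to the normalized height of a subvariety, $e(Z)>0$ is equivalent to $\widehat h_A(Z)>0$, where $\widehat h_A(Z)=\lim_n n^{-2(\dim Z+1)}h_{A,L}([n]_*Z)$.

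Next I would spread $(A,L)$ out to a polarized abelian scheme $\pi:\mathcal A\to\mathcal B$ over a smooth quasi-projective variety with $\mathbb C(\mathcal B)=\mathbf K$ and relatively ample symmetric $\mathcal L$, and extend $Z$ to a subvariety $\mathcal Z\subset\mathcal A$ flat over the good-reduction locus $\Lambda$. Applying Theorem~\ref{tm:formulaheight} to the polarized endomorphism $([n],\mathcal L)$ yields
\[\widehat h_A(Z)=\int_{\mathcal A_\Lambda}\widehat T_{[n]}^{\,\dim Z+1}\wedge[\mathcal Z]\wedge(\pi^*\omega_{\mathcal B})^{\dim\mathcal B-1}.\]
The key point is that on $\mathcal A_\Lambda$ the fibered Green current $\widehat T_{[n]}$ coincides with the smooth, closed, semipositive \emph{Betti form} $\omega_{\mathrm{Betti}}$: locally over $\mathcal B^{\mathrm{an}}$ a symplectic basis of the period lattice identifies $\mathcal A_\Lambda$ with $U\times(\mathbb R^{2g}/\mathbb Z^{2g})$, and $\omega_{\mathrm{Betti}}$ is the pullback under the resulting Betti map $b$ of the standard invariant form on the torus; it is globally defined since the monodromy lies in $\mathrm{Sp}_{2g}(\mathbb Z)$, it restricts to the canonical translation-invariant form on each fibre, and $[n]^*\omega_{\mathrm{Betti}}=n^2\omega_{\mathrm{Betti}}$, so by uniqueness of the $[n]^*$-eigencurrent with bounded potentials it equals $\widehat T_{[n]}$ on $\mathcal A_\Lambda$. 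Since every factor in the integral is non-negative, $\widehat h_A(Z)\ge 0$, with equality if and only if $\omega_{\mathrm{Betti}}^{\,\dim Z+1}\equiv 0$ on $\mathcal Z$; and because $\omega_{\mathrm{Betti}}=b^*(\text{flat form})$, this vanishing is equivalent to the real rank of $b|_{\mathcal Z}$ being at most $2\dim Z+1$ everywhere, i.e. to the \emph{Betti rank} of $\mathcal Z$ being non-maximal.

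The heart of the argument is therefore a functional-transcendence statement: if $A$ has trivial trace and $Z$ is non-degenerate as above with $1\le\dim Z<\dim A$, then the Betti rank of $\mathcal Z$ is maximal, equal to $2(\dim Z+1)$. I would deduce this from Gao's theorem on the generic rank of the Betti map, which refines the results of Andr\'e--Corvaja--Zannier and of Habegger and ultimately rests on the Ax--Schanuel theorem for the universal abelian variety $\mathfrak A_g$ (Mok--Pila--Tsimerman for $\mathcal A_g$, Gao in the mixed setting) together with the monodromy and Ax--Lindemann input that forces $\mathcal Z$ not to lie in a proper weakly special subvariety --- this is exactly where the hypotheses ``trivial trace'' and ``$Z$ generates $A$'' enter. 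Granting it, $\omega_{\mathrm{Betti}}^{\,\dim Z+1}\not\equiv 0$ on $\mathcal Z$, hence $\widehat h_A(Z)>0$ and $e(Z)>0$; unwinding Yamaki's reduction (re-introducing the stabilizer, the translate, and the trace part) turns this into the asserted description $Z=a+C+\tau(W\otimes_k\bar{\mathbf K})$. I expect the two delicate points to be, first, the functional-transcendence input (Gao's Betti-rank theorem), which is by far the deepest ingredient and must be made to work over a base $\mathcal B$ of arbitrary dimension, and second --- if one does not black-box it through Theorem~\ref{tm:formulaheight} --- the non-archimedean pluripotential analysis at the places of bad reduction needed to justify the height formula and the semipositivity of the canonical metric there, in the spirit of Gubler's and Chambert-Loir's work.
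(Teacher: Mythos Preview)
This theorem is not proved in the paper: it is stated with attribution to \cite{Cantat-Gao-Habegger-Xie} and no proof is given. The paragraphs that follow only recast its \emph{conclusion} in dynamical language (the fibration $p:V\to V_0$ over an isotrivial factor) in order to motivate Conjecture~\ref{Conjecture_Bogomolov}. So there is no ``paper's own proof'' to compare your proposal against.

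That said, your outline is a faithful high-level summary of the strategy actually used in \cite{Cantat-Gao-Habegger-Xie}: Yamaki's reductions to trivial trace and non-degenerate $Z$; the identification of $\widehat h_A(Z)$ with an integral of a power of the Betti form (their Theorem~B, of which Theorem~\ref{tm:formulaheight} here is a dynamical refinement); and the functional-transcendence input (Gao's mixed Ax--Schanuel / Betti-rank results) to force that power to be nonzero on $\mathcal Z$. One small quibble: $\omega_{\mathrm{Betti}}^{\dim Z+1}\equiv 0$ on $\mathcal Z$ is equivalent to the real rank of $b|_{\mathcal Z}$ being at most $2\dim Z$, not $2\dim Z+1$ (the rank is always even, so your inequality is equivalent but the phrasing is odd). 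Otherwise the sketch is sound and correctly isolates Gao's theorem as the deep ingredient.
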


Let us now give a dynamical formulation of the conclusion of this result.
 Fix $n\geq2$ and let $[n]:A\to A$ and $\tilde{[n]}:A_0\to A_0$ be the respective multiplication by $n$ morphisms. 
Let $M\geq0$ be such that $b:=[n]^M(a)$ is periodic under iteration of $[n]$, i.e.\  there exists $k\geq1$ with $[n]^kb=b$, and let
\[V:=b+C+\tau(A_0\otimes_\mathbf{k}\bar{\mathbf{K}}).\]

The variety $V$ is fixed by $[n]^k$ and, if $V_0:=\tau(A_0\otimes_\mathbf{k}\bar{\mathbf{K}})$, there is a fibration
$p:V\to V_0$
which is invariant by $[n]^k$, i.e.\  such that the following diagram commutes:
\[\xymatrix{
V\ar[r]^{[n]^k}\ar[d]_{p}  & \ar[d]^{p}V\\
V_0 \ar[r]_{[n]^k|_{V_0}} & V_0}\]
and such that
$(V_0,[n]^k|_{V_0},L|_{V_0})$ is isotrivial. We thus may rephrase the conclusion as follows: There exist subvarieties $V,V_0\subset A$ invariant by $[n]^k$ and an integer $N\geq1$ and a surjective morphism $p:V\to V_0$ such that
\begin{enumerate}
\item $p\circ([n]^k|_V)=([n]^k|_{V_0})\circ p$ and $(V_0,[n]|_{V_0},L|_{V_0})$ is isotrivial,
\item $[n]^N(Z)=p^{-1}(W_0)$ where $W_0$ is an isotrivial subvariety of $V_0$.
\end{enumerate}
  In particular, Conjecture~\ref{Conjecture_Bogomolov} in the Introduction is a generalization of this dynamical formulation of the geometric Bogomolov conjecture to the case where $(X,f,L)$ is a non-isotrivial polarized endomorphism defined over $\mathbf{K}$, $X$ is normal and $Z\subset X_{\bar{\mathbf{K}}}$ is irreducible subvariety.
  
  \medskip
  
Recall that in the Introduction, we defined a subvariety $Z\subseteq X$ as \emph{$f$-special} if there exist an integer $k$, a polarized endomorphism $(X,\Psi,L)$ a subvariety $Y$ with $Z\subseteq Y \subseteq X$ such that
\begin{itemize}
\item $f^k(Y)=Y=\Psi(Y)$;
\item $f^k\circ \Psi = \Psi \circ f^k$ on $Y$;
\item either $Z$ is preperiodic under $\Psi$ or $Z$ comes from an isotrivial factor of $(X,\Psi,L)$.
\end{itemize}
As explained in the Introduction, the existence of non-isotrivial families of CM abelian varieties in relative dimension at least $3$ imposes this definition. However, the situation is simpler when studying abelian varieties $A$ over function fields endowed with the map $[n]$: if $Z$ is an $[n]$-special subvariety, we can take $Y=X$ and $\Psi=[n]^k$.

\medskip

Recall that, by \cite{Faber,Gubler}, $\widehat{h}_f$ is induced by an $M_\mathcal{B}$-metric in the sense of Gubler \cite{Gubler-Bogomolov}, and that, in this case, if $Z\subset X_{\bar{\mathbf{K}}}$ is an irreducible subvariety, we have the Zhang inequalities which imply the following are equivalent by \cite[Corollary 4.4]{Gubler-Bogomolov}:
\begin{itemize}
\item the height of $Z$ is zero, i.e.\  $\widehat{h}_{f}(Z)=0$,
\item the essential minimum of the metrization vanishes:
\[e_1:=\sup_{Y}\inf_{x\in Z(\bar{\mathbf{K}})\setminus Y}\widehat{h}_f(x)=0,\]
where $Y$ ranges over all hypersurfaces of $Z$,
\item for any $\varepsilon>0$ the set $Z_\varepsilon=\{x\in Z(\bar{\mathbf{K}}); \ \widehat{h}_f(x)\leq \varepsilon\}$ is Zariski-dense in $Z$.
\end{itemize}
In particular, Conjecture~\ref{Conjecture_Bogomolov} is a characterization of the subvarieties $Z$ with $\widehat{h}_f(Z)=0$.

\subsection{Stable fibers of an invariant fibration}

We want to explore basic properties of subvarieties with height $0$. The first thing we want to do is to relate, when $f$ preserves a fibration, the height of the fiber over $y$ to the height of $y$. 

We let $\mathbf{K}=\C(\mathcal{B})$ be the field of rational function of a complex normal projective variety $\mathcal{B}$. When $X$ and $Y$ are projective varieties over $\mathbf{K}$ and $p:X\dashrightarrow Y$ is a dominant rational map defined over $\mathbf{K}$, for any irreducible subvariety $W\subset  Y_{\bar{\mathbf{K}}}$, we denote by $X_W$ the ``fiber'' $p^{-1}(W)$ of $p$ over $W$.

\begin{lemma}\label{lm:caseoffibers}
Let $(X,f,L)$ be a polarized endomorphism over $\mathbf{K}$ of degree $d>1$. Assume there exist a polarized endomorphism $(Y,g,E)$ over $\mathbf{K}$ of degree $d$ with $\dim Y<\dim X$, and a dominant rational map $p:X\dashrightarrow Y$ defined over $\mathbf{K}$ with $p\circ f=g\circ p$.
For any subvariety $W\subset Y_{\bar{\mathbf{K}}}$, we have 
\[\widehat{h}_{f}(X_W)=\widehat{h}_{g}(W).\]
In particular, $\widehat{h}_{f}(X_W)=0$ if and only if $\widehat{h}_{g}(W)=0$.
\end{lemma}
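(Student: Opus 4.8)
The plan is to exploit the functoriality of canonical heights together with Theorem~\ref{tm:formulaheight}, which expresses $\widehat{h}_f(Z)$ as an intersection-theoretic (current) quantity on a model. First I would reduce to the complex setting $\mathbf{k}=\C$ as in the introduction, pick a common normal model: a family $\pi_\mathcal{X}:\mathcal{X}\to\mathcal{B}$ for $(X,f,L)$ and a family $\pi_\mathcal{Y}:\mathcal{Y}\to\mathcal{B}$ for $(Y,g,E)$, with a rational map $\mathsf{p}:\mathcal{X}\dashrightarrow\mathcal{Y}$ over $\mathcal{B}$ extending $p$, regular over a common Zariski-open $\Lambda\subset\mathcal{B}$ and such that $\mathsf{p}\circ\mathcal{f}=\mathcal{g}\circ\mathsf{p}$ on $\mathcal{X}_\Lambda$. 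Shrinking $\Lambda$, I may assume $\mathsf{p}:\mathcal{X}_\Lambda\to\mathcal{Y}_\Lambda$ is a flat (even smooth) fibration with fibers of pure dimension $r:=\dim X-\dim Y$, and that the subvariety $\mathcal{W}\subset\mathcal{Y}$ attached to $W$ is flat over $\Lambda$ of relative dimension $m:=\dim W$, so that $\mathcal{X}_\mathcal{W}:=\mathsf{p}^{-1}(\mathcal{W})$ is the model of $X_W$, flat over $\Lambda$ of relative dimension $\ell:=m+r=\dim X_W$.

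The core computation is a slicing/pushforward identity. By Theorem~\ref{tm:formulaheight},
\[
\widehat{h}_f(X_W)=\int_{\mathcal{X}_\Lambda}\widehat{T}_\mathcal{f}^{\,\ell+1}\wedge[\mathcal{X}_\mathcal{W}]\wedge(\pi_\mathcal{X}^*\omega_\mathcal{B})^{\dim\mathcal{B}-1},
\qquad
\widehat{h}_g(W)=\int_{\mathcal{Y}_\Lambda}\widehat{T}_\mathcal{g}^{\,m+1}\wedge[\mathcal{W}]\wedge(\pi_\mathcal{Y}^*\omega_\mathcal{B})^{\dim\mathcal{B}-1}.
\]
Now I would choose the reference form $\widehat{\omega}_\mathcal{X}$ on $\mathcal{X}_\Lambda$ of the shape $\mathsf{p}^*\widehat{\omega}_\mathcal{Y}+\widehat{\omega}'$ where $\widehat{\omega}'$ restricts to a Kähler form on the fibers of $\mathsf{p}$; since $\mathcal{f}$ and $\mathcal{g}$ are polarized of the same degree $d$ and $\mathsf{p}$ semiconjugates them, averaging gives $\mathsf{p}^*\widehat{T}_\mathcal{g}=\widehat{T}_\mathcal{f}+dd^c\psi$ for a bounded $\psi$, in fact (after a suitable normalization of potentials) $\mathsf{p}^*\widehat{T}_\mathcal{g}$ coincides with the "horizontal part" of $\widehat{T}_\mathcal{f}$; more precisely $\widehat{T}_\mathcal{f}\geq \mathsf{p}^*\widehat{T}_\mathcal{g}$ is not needed — what I need is the cohomological/slicing fact that on the fibers $X_{W,\lambda}=p_\lambda^{-1}(W_\lambda)$ one has $T_{f_\lambda}^{\,\ell+1}\wedge[X_{W,\lambda}] = (p_\lambda)^*\big(T_{g_\lambda}^{\,m+1}\wedge[W_\lambda]\big)$ as measures, because $[X_{W,\lambda}]=(p_\lambda)^*[W_\lambda]$ and $T_{f_\lambda}=(p_\lambda)^*T_{g_\lambda}+dd^c(\cdot)$ with the $(\ell+1)$-st power killing the vertical correction for degree reasons (the vertical fiber has dimension $r$, and $\ell+1-(m+1)=r$, so any extra vertical $(1,1)$-factor beyond $r$ of them vanishes, forcing exactly $(p_\lambda)^*$ of the top power on $W_\lambda$). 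Then, integrating over $\Lambda$ and using $\pi_\mathcal{X}=\pi_\mathcal{Y}\circ\mathsf{p}$, the projection formula $\mathsf{p}_*[\mathcal{X}_\mathcal{W}]=[\mathcal{W}]$ and $\mathsf{p}_*\big(\widehat{T}_\mathcal{f}^{\,\ell+1}\wedge[\mathcal{X}_\mathcal{W}]\big)=\widehat{T}_\mathcal{g}^{\,m+1}\wedge[\mathcal{W}]$ yields the two integrals are equal, hence $\widehat{h}_f(X_W)=\widehat{h}_g(W)$.

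An alternative, cleaner route avoiding currents is purely arithmetic: by functoriality of Weil heights, $h_{Y,E}\circ p = h_{X,p^*E}+O(1)$ on $X(\bar{\mathbf{K}})$, and $p^*E$ and $L$ are both relatively ample so $h_{X,p^*E}$ and $h_{X,L}$ are commensurable up to lower-order terms on the points lying over a fixed $W$; taking limits along $f$ and using $p\circ f=g\circ p$ and $\widehat{h}_g\circ g=d\,\widehat{h}_g$, one gets $\widehat{h}_f\big|_{X_W}$ controlled by $\widehat{h}_g\circ p\big|_{X_W}$, and then the analogous statement for the heights of the subvarieties $X_W$ and $W$ follows by the standard fibral height comparison (e.g.\ via the arithmetic Bézout / Zhang-type pushforward for heights of cycles under $p$). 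I would present the current-theoretic proof as the main argument since Theorem~\ref{tm:formulaheight} is already available, and remark that the "in particular" clause is immediate.

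The main obstacle is the slicing identity $T_{f_\lambda}^{\,\ell+1}\wedge[X_{W,\lambda}]=(p_\lambda)^*\big(T_{g_\lambda}^{\,m+1}\wedge[W_\lambda]\big)$ — i.e.\ justifying that the Green current $T_{f_\lambda}$ decomposes compatibly with the semiconjugating fibration so that the "extra" vertical positivity contributes nothing to this particular wedge power. This is where one must be careful about potentials being only continuous (so Bedford–Taylor / Demailly monotone continuity of the wedge product is needed) and about $X$ possibly singular (work on a resolution, as in the proof of Proposition~\ref{prop:endopolarized}, and use that the relevant measures charge no pluripolar set). Once that fibral identity is in hand, everything else is the projection formula and Theorem~\ref{tm:formulaheight}.
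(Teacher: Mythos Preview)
Your strategy—models, Theorem~\ref{tm:formulaheight}, and the projection formula through $\mathsf{p}$—is exactly the paper's. The paper's proof is a three-line application: with $\ell:=\dim X-\dim Y$ and $s:=\dim W$, it asserts $\mathcal{p}_*\bigl(\widehat{T}_\mathcal{f}^{\,\ell+s+1}\bigr)=\widehat{T}_\mathcal{g}^{\,s+1}$, then uses $[\mathcal{X}_W]=\mathcal{p}^*[\mathcal{W}]$ and $\pi_\mathcal{X}=\pi_\mathcal{Y}\circ\mathcal{p}$ to push the integral from $\mathcal{X}_\Lambda$ down to $\mathcal{Y}_\Lambda$.

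Two concrete slips in your write-up to fix. First, $\mathsf{p}_*[\mathcal{X}_\mathcal{W}]=[\mathcal{W}]$ is dimensionally wrong: $[\mathcal{X}_\mathcal{W}]$ and $[\mathcal{W}]$ have different bidegrees and $\mathsf{p}_*$ of an integration current along a positive-dimensional fibration vanishes. What you need (and what the projection formula actually uses) is the \emph{pullback} identity $[\mathcal{X}_\mathcal{W}]=\mathsf{p}^*[\mathcal{W}]$. Second, your fibral identity $T_{f_\lambda}^{\,\ell+1}\wedge[X_{W,\lambda}]=(p_\lambda)^*\bigl(T_{g_\lambda}^{\,m+1}\wedge[W_\lambda]\bigr)$ has the arrow pointing the wrong way: $(p_\lambda)^*$ of a measure on $W_\lambda$ is not a measure when $p_\lambda$ has positive-dimensional fibers. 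The correct statement is the pushforward $(p_\lambda)_*\bigl(T_{f_\lambda}^{\,\ell+s+1}\bigr)=T_{g_\lambda}^{\,s+1}$, or its global version $\mathcal{p}_*\bigl(\widehat{T}_\mathcal{f}^{\,\ell+s+1}\bigr)=\widehat{T}_\mathcal{g}^{\,s+1}$, which is what the paper invokes.

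Once you state the key identity in that form, your anticipated difficulties (Bedford--Taylor, resolutions, the ``vertical correction'' argument) largely evaporate: both sides are closed positive $(s+1,s+1)$-currents on $\mathcal{Y}_\Lambda$ with continuous potentials, cohomologous, and satisfy $\mathcal{g}^*(\cdot)=d^{s+1}\cdot(\cdot)$ (use $\mathcal{p}\circ\mathcal{f}=\mathcal{g}\circ\mathcal{p}$ and $\mathcal{f}^*\widehat{T}_\mathcal{f}=d\,\widehat{T}_\mathcal{f}$), so they agree. Your ``alternative arithmetic route'' is too vague to stand on its own and is not needed.
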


\begin{proof}
Let $(\mathcal{X},\mathcal{f},\mathcal{L})$ and $(\mathcal{Y},\mathcal{g},\mathcal{E})$ be respective models for $(X,f,L)$ and $(Y,g,E)$ with common regular part $\Lambda$, and let $\mathcal{p}:\mathcal{X}\dashrightarrow\mathcal{Y}$ be a model for $p:X\dashrightarrow Y$. For any subvariety $W\subset Y_{\bar{\mathbf{K}}}$, we let $\mathcal{W}$ be the Zariski closure of $W$ in $\mathcal{Y}$.
Recall that $X_W:=\mathcal{p}^{-1}(W)$. We set $\mathcal{X}_W:=\mathcal{p}^{-1}(\mathcal{W})$, and $r:=\dim X-\dim Y>0$, and $s:=\dim W$.
We remark that $\widehat{T}_\mathcal{g}^{s+1}=\mathcal{p}_*\left(\widehat{T}_\mathcal{f}^{r+s+1}\right)$, so that the projection formula and Theorem~\ref{tm:formulaheight} give
\begin{align*}
\widehat{h}_{g}(W) =\widehat{h}_{f}(X_W),
\end{align*}
where we used that $\pi_{\mathcal{X}}\circ \mathcal{p}=\pi_{\mathcal{Y}}$.
\end{proof}

We deduce the following.

\begin{proposition}
Let $(X,f,L)$ be a polarized endomorphism over $\mathbf{K}$ of degree $d>1$ and let $Z$ be a $f$-special subvariety. Then $\widehat{h}_f(Z)=0$.
\end{proposition}
\begin{proof}
As $\widehat{h}_f=\widehat{h}_{f^k}$, for any $k\geq1$, up to replacing $f$ by an iterate, we can suppose there is a polarized endomorphism $(X,\Psi,L)$ and a subvariety $Y$ with $Z\subseteq Y \subseteq X$ such that
\begin{itemize}
\item $f(Y)=Y=\Psi(Y)$;
\item $f\circ \Psi = \Psi \circ f$ on $Y$;
\item either $Z$ is preperiodic under $\Psi$ or $Z$ comes from an isotrivial factor of $(X,\Psi,L)$.
\end{itemize}
By Lemma~\ref{lm:caseoffibers}, we have $\widehat{h}_{\Psi}(Z)=0$. All there is left to prove is that $\widehat{h}_{f}(Z)=\widehat{h}_{\Psi}(Z)$. Let $(\mathcal{X},\mathcal{f},\mathcal{L})$ and $(\mathcal{X},\psi,\mathcal{L})$ be models of $(X,f,L)$ and $(X,\Psi,L)$ respectively. As $\mathcal{f}\circ \psi=\psi\circ \mathcal{f}$ over any common regular part $\Lambda$, we have
\[\mathcal{f}^*\psi^*\widehat{T}_f=d\psi^*\widehat{T}_f \quad \text{on} \ \mathcal{Y}_\Lambda,\]
so that $\widehat{T}_\psi=\widehat{T}_f$ on $\mathcal{Y}_\Lambda$ (by uniqueness of the backward invariant current with locally bounded potentials). Let $\omega_\mathcal{B}$ be a K\"ahler form on $\mathcal{B}$ which represents $\mathcal{N}$ and let $\mathcal{Z}$ be the Zariski closure of $Z$ in $\mathcal{Y}$. Over any regular part $\Lambda$ over which $\mathcal{Z}$ is flat, we thus have
\begin{align*}
\int_{\mathcal{Y}_\Lambda}\widehat{T}_\psi^{\dim Z_\eta+1}\wedge[\mathcal{Z}]\wedge \pi^*(\omega_\mathcal{B})^{\dim\mathcal{B}-1} & =\int_{\mathcal{Y}_\Lambda}\widehat{T}_f^{\dim Z_\eta+1}\wedge[\mathcal{Z}]\wedge \pi^*(\omega_\mathcal{B})^{\dim\mathcal{B}-1}\\
& =\int_{\mathcal{X}_\Lambda}\widehat{T}_f^{\dim Z_\eta+1}\wedge[\mathcal{Z}]\wedge \pi^*(\omega_\mathcal{B})^{\dim\mathcal{B}-1},
\end{align*}
where we used that $\mathcal{Z}\subset\mathcal{Y}\subset\mathcal{X}$. The conclusion follows from Theorem~\ref{tm:formulaheight}.
\end{proof}

Consider a trivial family $(g(z),\lambda)$ on $\p^1\times \Lambda$, with $\deg(g)=d\geq2$ and a marked point $a:\Lambda \to \p^1$. Then, it is easy to see that the graph of $a$ is stable if and only if $a$ is constant (if not, $a(\lambda)$ is a repelling periodic point of $g$ at some $\lambda$). More generally, when $p:X\dashrightarrow\p^1$ and $g:\p^1\to\p^1$ is isotrivial, there exists an automorphism $\phi:\p^1\to\p^1$ defined over $\bar{\mathbf{K}}$
such that $\phi^{-1}\circ g\circ \phi $ is defined over $\mathbb{C}$ and we say that $X_z$ is an \emph{isotrivial fiber} of $p$ if $X_z=p^{-1}\{z\}$ where $\phi^{-1}(z)$ is defined over $\mathbb{C}$.

As a particular case of Lemma~\ref{lm:caseoffibers}, we have 

\begin{corollary}\label{cor:fiber-overP1}
Let $(X,f,L)$ be a non-isotrivial polarized endomorphism over $\mathbf{K}$ of degree $d>1$. Assume there exist a polarized endomorphism $(\p^1,g,\mathcal{O}_{\p^1}(1))$ over $\mathbf{K}$ of degree $d$, and a dominant rational map $p:X\dashrightarrow \p^1$ defined over $\mathbf{K}$ with $p\circ f=g\circ p$. For any $y\in \p^1(\mathbf{K})$, let $X_y$ be the fiber $p^{-1}\{y\}$ of $p$.
\begin{enumerate}
\item If $(\p^1,g,\mathcal{O}_{\p^1}(1))$ is non-isotrivial, then $\widehat{h}_{f}(X_y)=0$ if and only if $X_y$ is preperiodic under iteration of $f$,
\item if $(\p^1,g,\mathcal{O}_{\p^1}(1))$ is isotrivial, then $\widehat{h}_f(X_y)=0$ if and only if $X_y$ is an isotrivial fiber of $p$.
\end{enumerate}
\end{corollary}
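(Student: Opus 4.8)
The statement is Corollary~\ref{cor:fiber-overP1}, which I will deduce from Lemma~\ref{lm:caseoffibers} together with the characterization of stability/preperiodicity for one-dimensional endomorphisms over $\mathbf{K}$ already established (Theorem~\ref{tm:Baker}, or rather its geometric counterpart from Corollary~\ref{cor:Northcott_function_field} applied to $(\p^1,g,\mathcal{O}_{\p^1}(1))$). The first move is to apply Lemma~\ref{lm:caseoffibers} with $Y=\p^1$, $E=\mathcal{O}_{\p^1}(1)$ and $W=\{y\}$ a single point: it gives $\widehat{h}_f(Z_y)=\widehat{h}_g(y)$ for every $y\in\p^1(\mathbf{K})$ (and indeed every $y\in\p^1(\bar{\mathbf{K}})$). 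So the whole problem reduces to understanding when $\widehat{h}_g(y)=0$, and separately to checking that $\widehat{h}_g(y)=0$ forces $Z_y$ to be preperiodic (resp. isotrivial) and conversely. The converse direction is the easy half: if $Z_y$ is preperiodic under $f$ then $\widehat{h}_f(Z_y)=0$ by the iteration property $\widehat{h}_f(f_*(Z))=d^{\ell+1}\widehat{h}_f(Z)$ noted in \S\ref{sec:height}; and in the isotrivial case one checks directly (after conjugating $g$ to a constant family by $\phi$ over $\bar{\mathbf{K}}$) that an isotrivial fiber $p^{-1}\{y\}$ is mapped by $f$ into the family of fibers over the $g$-orbit of $y$, which is a constant (hence preperiodic, as $g$ has finitely many periodic points over $\mathbb C$... more carefully: $\phi^{-1}(y)\in\p^1(\C)$ need not be periodic, but the corresponding point still has canonical height $0$ since the family is trivial, so $\widehat h_g(y)=0$).

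\begin{proof}
Apply Lemma~\ref{lm:caseoffibers} with $(Y,g,E)=(\p^1,g,\mathcal{O}_{\p^1}(1))$ and $W=\{y\}$, a point of $\p^1(\mathbf{K})$. Since $\dim W=0$, we obtain
\[\widehat{h}_f(Z_y)=\widehat{h}_g(\{y\})=\widehat{h}_g(y).\]
It therefore suffices to analyze when $\widehat{h}_g(y)=0$.

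\emph{Case 1: $(\p^1,g,\mathcal{O}_{\p^1}(1))$ is non-isotrivial.} By Theorem~\ref{tm:Baker} (equivalently, by Corollary~\ref{cor:Northcott_function_field} applied to the non-isotrivial polarized endomorphism $(\p^1,g,\mathcal{O}_{\p^1}(1))$, which has no periodic isotrivial subvariety of positive dimension since $\dim\p^1=1$ and $g$ is non-isotrivial), we have $\widehat{h}_g(y)=0$ if and only if $y$ is preperiodic under iteration of $g$. If $y$ is preperiodic, say $g^n(y)=g^m(y)$ with $n>m\geq0$, then $f^n(Z_y)=Z_{g^n(y)}=Z_{g^m(y)}=f^m(Z_y)$ because $p\circ f=g\circ p$, so $Z_y$ is preperiodic under iteration of $f$; in particular $\widehat{h}_f(Z_y)=0$ by the relation $\widehat{h}_f(f_*(Z))=d^{\ell+1}\widehat{h}_f(Z)$ of \S\ref{sec:height}. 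Conversely, if $Z_y$ is preperiodic under $f$, then $\widehat{h}_f(Z_y)=0$, hence $\widehat{h}_g(y)=0$, hence $y$ is preperiodic under $g$ and thus, as just shown, $Z_y$ being preperiodic is equivalent to $\widehat{h}_f(Z_y)=0$. This proves item (1).

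\emph{Case 2: $g$ is isotrivial.} Let $\phi:\p^1\to\p^1$ be an automorphism defined over $\bar{\mathbf{K}}$ such that $\tilde g:=\phi^{-1}\circ g\circ\phi$ is defined over $\C$, and recall that $Z_y$ is called an isotrivial fiber of $p$ when $\phi^{-1}(y)\in\p^1(\C)$. Replacing $\mathbf{K}$ by a finite extension over which $\phi$ is defined (which changes neither the vanishing of $\widehat{h}_f$ nor, by Lemma~\ref{lm:caseoffibers}, the identity $\widehat{h}_f(Z_y)=\widehat{h}_{\tilde g}(\phi^{-1}(y))$), we may assume $g=\tilde g$ is defined over $\C$, i.e. the model $\mathcal{g}:\p^1\times\Lambda\to\p^1\times\Lambda$ is the trivial family $(z,\lambda)\mapsto(g(z),\lambda)$. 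Then the fibered Green current is $\widehat{T}_{\mathcal{g}}=\mathrm{pr}_{\p^1}^*T_g$, where $T_g$ is the Green current of the single map $g$ on $\p^1$, so for a point $y\in\p^1(\mathbf{K})$ with associated section $\sigma:\Lambda\to\p^1$,
\[\widehat{h}_g(y)=\int_{\p^1\times\Lambda}\widehat{T}_{\mathcal{g}}\wedge[\Gamma_\sigma]\wedge(\pi^*\omega_\mathcal{B})^{\dim\mathcal{B}-1}=\int_{\mathcal{B}}\sigma^*(T_g)\wedge\omega_\mathcal{B}^{\dim\mathcal{B}-1}\geq0,\]
by Theorem~\ref{tm:formulaheight}. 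Since $T_g=\mu_g$ gives no mass to the (proper analytic) set of points of $\p^1(\C)$, and $\sigma$ is non-constant exactly when $y\notin\p^1(\C)$, the last integral vanishes if and only if $\sigma$ is constant with value in $\p^1(\C)$, i.e. if and only if $y\in\p^1(\C)$, i.e. if and only if $Z_y$ is an isotrivial fiber of $p$. This proves item (2).
\end{proof}

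The only delicate point is the justification in Case~2 that $\sigma^*(T_g)$ has positive total mass unless $\sigma$ is constant into $\p^1(\C)$: this is where one uses that $\mu_g$ puts no mass on points together with the fact that a non-constant holomorphic section meets the support of $\mu_g$ in a set of positive measure for the pullback. Everything else is a formal consequence of Lemma~\ref{lm:caseoffibers}, Theorem~\ref{tm:formulaheight}, and the already-established Northcott property in dimension one.
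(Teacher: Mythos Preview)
Your proof follows the same route as the paper: reduce via Lemma~\ref{lm:caseoffibers} to the identity $\widehat{h}_f(Z_y)=\widehat{h}_g(y)$, then invoke the one-dimensional Northcott property in the non-isotrivial case and compute directly in the isotrivial case after trivializing $g$.

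The only point that needs tightening is your justification at the end of Case~2. The claim that $\int_{\mathcal{B}}\sigma^*(\mu_g)\wedge\omega_\mathcal{B}^{\dim\mathcal{B}-1}>0$ for non-constant $\sigma$ is correct, but the reason you give (that $\mu_g$ does not charge points and that $\sigma$ meets $\supp(\mu_g)$ in positive measure) is heuristic and not a proof. The clean argument, which is what the paper does, is cohomological: since $\mu_g=\omega_{\mathrm{FS}}+dd^c u$ with $u$ continuous on $\p^1$, the pullback $\sigma^*\mu_g$ is well-defined and cohomologous to $\sigma^*\omega_{\mathrm{FS}}$, so
\[
\int_{\mathcal{B}}\sigma^*(\mu_g)\wedge\omega_\mathcal{B}^{\dim\mathcal{B}-1}
=\int_{\mathcal{B}}\sigma^*(\omega_{\mathrm{FS}})\wedge\omega_\mathcal{B}^{\dim\mathcal{B}-1}
=\deg_{\mathcal{N}}(\sigma),
\]
which is zero if and only if $\sigma$ is constant. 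Replace your final paragraph with this and the argument is complete.
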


\begin{proof}
Assume first $g$ is non-isotrivial. According to Lemma~\ref{lm:caseoffibers}, we have $\widehat{h}_g(X_y)=0$ if and only if $\widehat{h}_g(y)=0$. Since $(\p^1,g,E)$ is non-isotrivial, $\widehat{h}_g(y)=0$ if and only if $y$ is $g$-preperiodic.
As $f(X_y)=X_{g(y)}$, the variety $X_y$ has to be preperiodic.

Assume now $g$ is isotrivial. Then there exist a finite extension $\mathbf{K}'$ of $\mathbf{K}$ and an affine automorphism $\phi:\mathbb{P}^1\to\mathbb{P}^1$, defined over $\mathbf{K}'$ such that $g_0:=\phi^{-1}\circ g\circ \phi$ is defined over $\C$. Let $\rho:\mathcal{B}'\to\mathcal{B}$ be a finite branched cover with $\mathbf{K}'=\C(\mathcal{B}')$ and let $(\p^1\times\mathcal{B}',\mathcal{g},\mathcal{O}_{\p^1}(1))$ be a model for $(\p^1,g,\mathcal{O}_{\p^1}(1))$ over $\mathcal{B}'$. Let $\sigma:\mathcal{B}'\dashrightarrow\p^1$ be the rational map induced by $\phi^{-1}(y)$ and $\mathcal{Y}$ be the Zariski closure of $y$ in $\p^1\times\mathcal{B}'$. If $\Phi:\p^1\times\mathcal{B}'\dashrightarrow \p^1\times\mathcal{B}'$ is a model of $\phi$ and $\Lambda$ is a common regular part for all above models, then
\begin{align*}
\widehat{h}_{g}(y) & =\int_{\p^1\times \Lambda}[\mathcal{Y}]\wedge \widehat{T}_\mathcal{g}\wedge(\pi_{\mathcal{B'}}^*\omega_{\mathcal{B}'})^{\dim\mathcal{B}'-1}  = \int_{\p^1\times \Lambda}[\mathcal{Y}]\wedge \Phi_*\left(\pi_{\p^1}^*(\mu_{g_0})\right)\wedge(\pi_{\mathcal{B'}}^*\omega_{\mathcal{B}'})^{\dim\mathcal{B}'-1}\\
& = \int_{\p^1\times \Lambda}[\Gamma_\sigma]\wedge\pi_{\p^1}^*(\mu_{g_0})\wedge(\pi_{\mathcal{B'}}^*\omega_{\mathcal{B}'})^{\dim\mathcal{B}'-1} = \int_{\Lambda}\sigma^*(\mu_{g_0})\wedge\omega_{\mathcal{B}'}^{\dim\mathcal{B}'-1}=\deg_{\mathcal{N}}(\sigma).
\end{align*}
In particular, $\widehat{h}_{g}(y)=0$ if and only if $\sigma$ is constant, i.e.\  $\phi^{-1}(y)\in \C$. This concludes the proof.
\end{proof}

\subsection*{Acknowledgment}
The first author would like to heartily thank S\'ebastien Boucksom for useful comments and discussions about families of varieties and Damian Brotbek for discussions which led to the formulation of Theorem~\ref{tm:Pk_intro} in the context of polarized endomorphisms of normal projective varieties. We would like to thank Matthieu Astorg for interesting discussions at an early stage of this work. We also would like to thank Antoine Chambert-Loir and Charles Favre. We are indebted to Najmuddin Fakhruddin for many interesting discussions, notably for sharing the references \cite{Rapoport, Ghioca_Tucker} with us.
Finally, we would like to thank the anonymous referees for their very interesting comments and suggestions which helped to improve the presentation of this work.

\bibliographystyle{short}
\bibliography{biblio}

\end{document}